\documentclass[10pt,a4paper]{amsart}
\usepackage{amssymb,amsmath,mathrsfs}

\usepackage{times}

\usepackage[left=3.3cm,right=3.3cm,top=4cm,bottom=3.5cm]{geometry}

\usepackage{color}

\newtheorem{theorem}{Theorem}[section]
\newtheorem{lemma}[theorem]{Lemma}
\newtheorem{proposition}[theorem]{Proposition}

\newtheorem{corollary}[theorem]{Corollary}
\newtheorem{definition}[theorem]{Definition}
\newtheorem{remark}[theorem]{Remark}
\numberwithin{equation}{section}

\newcommand{\N}{\mathbb{N}}
\newcommand{\R}{\mathbb{R}}
\newcommand{\Rn}{\mathbb{R}^{n}}
\newcommand{\RN}{\mathbb{R}^{N}}
\newcommand{\M}{\mathbb{R}^{N \times n}}
\newcommand{\dd}{\mathrm{d}}
\newcommand{\tF}{\tilde{F}}
\newcommand{\tu}{\tilde{u}}

\newcommand{\tha}{\tilde{h}}
\newcommand{\BB}{\mathbb{B}}

\newcommand{\Leb}{\mathscr{L}^{n}}

\newcommand{\lip}{\operatorname{lip}}
\newcommand{\CC}{\operatorname{C}}

\newcommand{\LL}{\operatorname{L}}

\newcommand{\WW}{\operatorname{W}}
\newcommand{\BV}{\operatorname{BV}}
\newcommand{\di}{\operatorname{div}}
\newcommand{\wstar}{\stackrel{*}{\rightharpoonup}}
\newcommand{\toY}{\overset{\mathrm{Y}}{\to}}

\newcommand{\bv}{\operatorname{BV}}
\newcommand{\dif}{\operatorname{d}\!}

\newcommand{\D}{D}
\newcommand{\locc}{\operatorname{loc}}
\newcommand{\F}{\mathscr{F}}

\def\Xint#1{\mathchoice
   {\XXint\displaystyle\textstyle{#1}}%
   {\XXint\textstyle\scriptstyle{#1}}%
   {\XXint\scriptstyle\scriptscriptstyle{#1}}%
   {\XXint\scriptscriptstyle\scriptscriptstyle{#1}}%
   \!\int}
\def\XXint#1#2#3{{\setbox0=\hbox{$#1{#2#3}{\int}$}
     \vcenter{\hbox{$#2#3$}}\kern-.5\wd0}}

\def\dashint{\Xint-}

\usepackage{color}
\usepackage[citecolor=blue,colorlinks=true]{hyperref}

\begin{document}
\title{Partial regularity for BV minimizers}
\thanks{Version 22/3/2018}
\author[F.~Gmeineder]{Franz Gmeineder}
\author[J.~Kristensen]{Jan Kristensen}

\maketitle
%\hrulefill
\begin{abstract}
We establish an $\varepsilon$-regularity result for the derivative of a map of bounded variation that minimizes a strongly
quasiconvex variational integral of linear growth, and, as a consequence, the partial regularity of such $\BV$ minimizers.
This result extends the regularity theory for minimizers of quasiconvex integrals on Sobolev spaces to the context of 
maps of bounded variation. Previous partial regularity results for $\BV$ minimizers in the linear growth set-up were confined
to the convex situation.
\end{abstract}

\section{Introduction}
In this paper we investigate the local regularity properties of minimizers for variational integrals defined on Dirichlet classes 
of maps of bounded variation. In order to describe more precisely our set-up and why it is natural we consider
a continuous real-valued function defined on $N \times n$ matrices, $F \colon \M \to \R$, that we henceforth call an integrand. 
Assume that $F$ is of linear growth, that is, for some constant $L>0$ we have  
\begin{equation}\label{1grow}
|F(z)| \leq L \bigl( |z|+1 \bigr)
\end{equation}
for all matrices $z \in \M$. The reader is referred to Section \ref{sec:prelims} for undefined notation and terminology.
For a bounded Lipschitz domain $\Omega$ in $\Rn$ and a given $\WW^{1,1}=\WW^{1,1}( \Omega , \RN )$ Sobolev map 
$g \colon \Omega \to \RN$ as boundary datum we seek to minimize 
\begin{align}\label{intro1}
\int_{\Omega} \! F(\nabla v(x)) \, \dd x
\end{align}
over $v \in \WW^{1,1}_{g}=\WW^{1,1}_{g}( \Omega , \RN )$, the $\WW^{1,1}$ Dirichlet class determined by $g$. Here $\nabla v$ denotes the 
approximate Jacobi matrix that we recall coincides with the distributional derivative $Dv = \nabla v \Leb \lfloor \Omega$ when 
$v$ is a $\WW^{1,1}$ Sobolev map, thus $\nabla v(x) := \bigl[ \partial v_{j}/ \partial x_{i} (x) \bigr]$, where $j$ is 
the row number and $i$ is the column number whereby $\nabla v$ is $\M$-valued. 
The standard approach to the variational problem (\ref{intro1}) is to let the functional set-up be dictated by the coercivity inherent 
to the problem. Under the linear growth hypothesis (\ref{1grow}) the best we can hope for is that \emph{all} minimizing sequences for
(\ref{intro1}) on $\WW^{1,1}_{g}$ are bounded in the Sobolev space $\WW^{1,1}$. Building on \cite{CK} we show in Proposition 
\ref{sharpening} below that this is equivalent to the existence of constants $c_{1}>0$, $c_2 \in \R$ such that
\begin{equation}\label{mean}
\int_{\Omega} \! F(\nabla v(x)) \, \dd x \geq \int_{\Omega} \biggl( c_{1}| \nabla v(x)| + c_{2} \biggr) \, \dd x
\end{equation}
holds for all $v \in \WW^{1,1}_{g}$. We express (\ref{mean}) by saying that $F$ is mean coercive. In turn, Proposition \ref{sharpening} also
establishes the equivalence between mean coercivity and the existence of a constant $\ell > 0$ such that $F-\ell E$ is \emph{quasiconvex} 
at some $z_{0} \in \M$. Here $E \colon \M \to \R$ is our \emph{reference integrand} defined as
\begin{equation}\label{defE}
E(z) = \sqrt{1+|z|^{2}}-1.
\end{equation}
It is of course a multi-dimensional generalization from $n \geq 2$, $N=1$ of the area integrand (and of the curve length integrand when 
$n=1$, $N \geq 1$). By quasiconvexity we mean the notion introduced by \textsc{Morrey} in \cite{Morrey}, its definition is recalled in
Section 2 below. We emphasize that for $n=1$ or $N=1$, quasiconvexity is just ordinary convexity, whereas in the multi-dimensional 
vectorial case $n$, $N >1$, considered in this paper, there exists many nonconvex quasiconvex integrands of linear growth.

The question of existence of minimizers can then be successfully tackled if we assume that (\ref{mean}) holds and allow maps of bounded 
variation as minimizers. Indeed, a minimizing sequence $( u_j )$ for the problem (\ref{intro1}) is then bounded in $\WW^{1,1}$ and so admits 
a subsequence $( u_{j_k})$ so that for some $u \in \BV = \BV (\Omega , \RN )$ we have $u_{j_k} \to u$ in $\LL^1$ and of course still
$\sup_{k} \int_{\Omega} \! | \nabla u_{j_k}| \, \dd x < \infty$. We express this by writing $u_{j_k} \wstar u$ in $\BV$ and
recall that $u \colon \Omega \to \RN$ is of bounded variation, written $u \in \BV$, if it is $\LL^1$ and its distributional 
partial derivatives are measures: $Du = [ \partial u_{i}/\partial x_{j} ]$ is a bounded $\M$-valued Radon measure on $\Omega$.
We must extend the functional (\ref{intro1}) to such $u$ in a meaningful way, which in the present context is most conveniently 
done by semicontinuity following a procedure used by \textsc{Lebesgue}, \textsc{Serrin} and for quasiconvex integrals of anisotropic growth  
\textsc{Marcellini} \cite{Marcel}:
\begin{align}\label{intro3}
\F [u,\Omega ]:=\inf\left\{\liminf_{j \to \infty} \int_{\Omega} \! F(\nabla u_{j}) \, \dd x\colon\; (u_{j})\subset \WW^{1,1}_{g} (\Omega , \RN ),
\;u_{j}\to u\; \text{in} \; \LL^{1}(\Omega ,\RN) \right\}
\end{align}
Building on the works by \textsc{Ambrosio \& Dal Maso} \cite{AmbrosioDalMaso} and \textsc{Fonseca \& M\"{u}ller} 
\cite{FonsecaMueller} an integral representation for the functional $\F [u, \Omega ]$ was found in \cite{KR1} under the assumptions of 
quasiconvexity, linear growth (\ref{1grow}) and mean coercivity (\ref{mean}). For such integrands we define the 
\emph{recession integrand} by
\begin{align*}
F^{\infty}(z):=\limsup_{t\nearrow\infty}\frac{F(tz)}{t},\qquad z \in\M . 
\end{align*}
Then $F^{\infty}$ is quasiconvex and positively $1$-homogeneous \cite{Mu}. Given $u\in \BV$ we can write the
Lebesgue--Radon--Nikod\'{y}m decomposition of $Du$ into its absolutely continuous and singular parts 
with respect to $\mathscr{L}^{n}$ as
$$
\D u=\D^{ac}u+\D^{s}u=\nabla u\mathscr{L}^{n}+\tfrac{\dif \D^{s}u}{\dif |\D^{s}u|}|\D^{s}u|,
$$ 
and have then  
\begin{align}\label{intro4}
\F[u, \Omega ]=\int_{\Omega} \! F(\nabla u)\, \dd x + \int_{\Omega}\! F^{\infty}\left(\frac{\dif \D^{s}u}{\dif |\D^{s}u|}\right)\dif |\D^{s}u|+
\int_{\partial\Omega} \! F^{\infty}\big( (g-u)\otimes \nu_{\Omega} \big)\dif\mathcal{H}^{n-1},
\end{align}
where $\nu_{\Omega}$ is the outward unit normal on $\partial \Omega$. 
The last term, akin to a penalization term for failure to satisfy the Dirichlet boundary condition, must be there because the trace 
operator is not weak$\mbox{}^\ast$ continuous on $\BV$. 
We shall use the shorthand 
$$
\int_{\Omega} \! F(Du) := \int_{\Omega} \! F(\nabla u) \, \dd x + \int_{\Omega} \! F^{\infty}\left(\frac{\dif \D^{s}u}{\dif |\D^{s}u|}\right)\dif |\D^{s}u|
$$ 
for the first two terms on the right-hand side in (\ref{intro4}). 
It turns out that this expression also coincides with an extension by (area-strict) continuity of the 
integral (\ref{intro1}) initially defined on the Sobolev space $\WW^{1,1}$ (see \cite[Theorem 4]{KR1} and 
Lemmas \ref{Eapprox} and \ref{Econt} below). 
Let us summarize, under the above assumptions on $F$, we have that all minimizing sequences admit a weakly$\mbox{}^\ast$ convergent subsequence 
whose limit $u \in \BV$ is a minimizer for the functional defined at (\ref{intro4}): $\F [u,\Omega ] \leq \F [v,\Omega ]$ holds
for all $v \in \BV$. 
In particular we have for $v \in \BV$ so $u-v$ has compact support in $\Omega$ that
\begin{equation}\label{intro5}
\int_{\Omega} \! F(Du) \leq \int_{\Omega} \! F(Dv) 
\end{equation}
holds. It is clear that we should not expect that the minimality condition (\ref{intro5}) under the above assumptions on $F$ 
would entail regularity of $u$ on the Schauder $\CC^{k, \alpha}$ scale for a $k \geq 1$. For that we must evidently impose on $F$ 
a stronger quasiconvexity condition, one that in particular ensures that $F$ cannot be affine on any open subset of matrix 
space $\M$. In view of the above discussion it is natural to require that, for some fixed positive constant $\ell > 0$,
$F-\ell E$ is quasiconvex at \emph{all} $z \in \M$. That this turns out to be sufficient for regularity is our main result:

\begin{theorem}\label{thm:main}
Let $\ell$, $L > 0$ be positive constants and
suppose the integrand $F \colon \M \to \R$ satisfies the following three hypotheses:
\begin{equation}\label{H}
\begin{array}{ll}
(\mathrm{H}0) \hspace{1cm} & F\;\text{ is } \CC^{2,1}_{\mathrm{loc}}\\
{} & {}\\
(\mathrm{H}1) \hspace{1cm} & |F(z)| \leq L(|z| + 1) \quad \forall \, z \in \M\\
{} & {}\\
(\mathrm{H}2) \hspace{1cm} & z \mapsto F(z)-\ell E(z) 
\text{ is quasiconvex,}
\end{array}\nonumber
\end{equation}
where $E \colon \M \to \R$ is the reference integrand defined in (\ref{defE}).
Then for each $m>0$ there exists $\varepsilon_{m}=\varepsilon_{m}(\ell /L, F^{\prime \prime})>0$ with the following property. 
If $u \in \BV (\Omega , \RN )$ is a minimizer in the sense of (\ref{intro5}), and $B=B_{r}(x_{0}) \subset \Omega$ is a ball such that
\begin{equation}\label{sma}
|(Du)_{B}| := \left| \frac{Du (B)}{\mathscr{L}^{n}(B)} \right| < m \quad \mbox{ and } \quad
\frac{1}{\mathscr{L}^{n}(B)}\int_{B} \! E\bigl( Du-(Du)_{B}\mathscr{L}^{n}\bigr) < \varepsilon_m ,
\end{equation}
then $u$ is $\CC^{2,\alpha}$ on $B_{r/2}(x_{0})$ for each $\alpha < 1$. More precisely, $u$ is $\CC^2$ on $B_{r/2}(x_{0})$ and there 
exists a constant $c=c(\alpha ,\ell /L,F^{\prime \prime} )$ such that
\begin{equation}\label{keyregest}
\sup_{\stackrel{x,y \in B_{r/2}(x_{0})}{x \neq y}} \frac{| \nabla^{2}u (x)-\nabla^{2}u(y)|^{2}}{|x-y|^{2\alpha}} \leq 
\frac{c}{r^{n+2+2\alpha}} \int_{B_{r}(x_{0})} \! E\bigl( Du-(Du)_{B_{r}(x_{0})}\mathscr{L}^{n}\bigr).
\end{equation}
In particular, it follows that the minimizer $u$ is partially 
regular, in the sense that there exists an open subset $\Omega_{u} \subset \Omega$ such that $\mathscr{L}^{n}( \Omega \setminus \Omega_{u} ) =0$ 
and $u$ is $\CC^{2,\alpha}_{\locc}$ on $\Omega_u$ for each $\alpha < 1$.
\end{theorem}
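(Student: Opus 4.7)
The plan is to follow the classical $\varepsilon$-regularity strategy of \textsc{Evans}/\textsc{Acerbi-Fusco}, but with the $\LL^{2}$-excess replaced throughout by the natural $E$-excess
\[
\Phi (x_{0},r,z_{0}) := \frac{1}{\Leb(B_{r}(x_{0}))} \int_{B_{r}(x_{0})} \! E\bigl( Du - z_{0}\Leb \bigr),\qquad z_{0}=(Du)_{B_{r}(x_{0})},
\]
and with the blow-up/approximation carried out in the area-strict topology on $\BV$ rather than in $\WW^{1,2}$. Once the natural excess is identified, the programme splits into three classical steps: a Caccioppoli inequality of the second kind, an $\A$-harmonic approximation, and a decay iteration; the output is Hölder continuity of $\nabla u$, whereupon $\CC^{2,\alpha}$ follows from linear Schauder theory applied to the Euler--Lagrange system, which is uniformly elliptic in the region where $|\nabla u|$ is bounded.

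First I would freeze $z_{0} = (Du)_{B}$ and use the hypothesis $|z_{0}|<m$ together with $(\mathrm{H}2)$ to record a Gårding-type estimate: on matrices of modulus $\lesssim m$ the bilinear form $F^{\prime\prime}(z_{0})$ satisfies a strong Legendre--Hadamard condition with constant comparable to $\ell (1+m^{2})^{-3/2}$, while the remainder $F(z_{0}+w)-F(z_{0})-F^{\prime}(z_{0})w$ is controlled above by $E$-type quantities thanks to $(\mathrm{H}0)$--$(\mathrm{H}1)$. Next I would prove the Caccioppoli inequality of the second kind: for the affine map $a(x):=u_{B}+z_{0}(x-x_{0})$,
\[
\int_{B_{r/2}(x_{0})} \! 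E\bigl( Du-z_{0}\Leb \bigr) \;\leq\; c \int_{B_{r}(x_{0})} \! E\!\left(\frac{u-a}{r}\right) \dd x.
\]
The derivation follows \textsc{Evans} in spirit: choose a Lipschitz cut-off $\eta$ between $B_{r/2}$ and $B_{r}$, test quasiconvexity of $F-\ell E$ against the competitor $v=a+\eta (u-a)$ regularized by mollification (using the area-strict density of smooth maps from \cite{KR1} and Lemmas \ref{Eapprox}--\ref{Econt} mentioned in the excerpt), expand $F$ to second order around $z_{0}$, and absorb the boundary layer terms by a standard hole-filling argument. Crucially, the singular part of $Du$ is handled by replacing $F$ with its area-strict extension, so that $F^{\infty}(\tfrac{\dif D^{s}u}{\dif |D^{s}u|})\dif |D^{s}u|$ is absorbed into $E(Du-z_{0}\Leb)$ via the elementary bound $F^{\infty}\leq L|\cdot |$ and the concavity/convexity properties of $E$.

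The second ingredient is an $\A$-harmonic approximation suited to the $\BV$ setting: if $\Phi (x_{0},r,z_{0})<\varepsilon_{m}$, then after translation and affine subtraction the rescaled map is area-strictly close to a solution $h$ of the constant-coefficient elliptic system $\di \bigl( F^{\prime\prime}(z_{0})\nabla h\bigr) =0$. I would prove this by contradiction/compactness: a non-approximating sequence of almost-minimizers with vanishing excess, rescaled by $\sqrt{\Phi}$, converges in the sense of area-strict convergence to a limit that by lower semicontinuity and the Gårding inequality must be $\A$-harmonic, yielding a contradiction. Because $h$ solves a constant-coefficient elliptic system it enjoys the standard excess decay
\[
\Xint-_{B_{\theta r}} \! |\nabla h - (\nabla h)_{B_{\theta r}}|^{2} \,\dd x \;\leq\; c\, \theta^{2}\,\Xint-_{B_{r}} \! |\nabla h|^{2} \,\dd x,
\]
which, when transferred back to $u$ via the approximation and combined with the Caccioppoli inequality, yields the hybrid decay
\[
\Phi (x_{0},\theta r,(Du)_{B_{\theta r}}) \;\leq\; C \theta^{2}\,\Phi (x_{0},r,(Du)_{B_{r}(x_{0})})
\]
for all sufficiently small but fixed $\theta$, provided $\Phi (x_{0},r,\cdot )<\varepsilon_{m}$. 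A standard iteration (choosing $\theta$ so that $C\theta^{2}<\theta^{2\alpha}$ for any prescribed $\alpha <1$) then gives the Campanato decay
\[
\Phi (y,\rho ,(Du)_{B_{\rho}(y)}) \;\leq\; c\,(\rho /r)^{2\alpha}\,\Phi (x_{0},r,(Du)_{B_{r}(x_{0})})
\]
uniformly in $y\in B_{r/2}(x_{0})$, which by the Campanato characterization forces $D^{s}u=0$ on $B_{r/2}(x_{0})$ and $\nabla u \in \CC^{0,\alpha}$ there.

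Finally, once $\nabla u$ is continuous and uniformly bounded on $B_{r/2}(x_{0})$, the Euler--Lagrange system $\di F^{\prime}(\nabla u)=0$ is a quasilinear elliptic system with $\CC^{1,1}$ coefficients whose ellipticity constant is controlled by $\ell /(1+m^{2})^{3/2}$; standard Schauder theory upgrades $u$ to $\CC^{2,\alpha}$ and delivers the quantitative bound (\ref{keyregest}). The partial regularity statement then follows at once by noting that the regular set $\Omega_{u}=\{x\in\Omega \colon \Phi (x,r,\cdot )<\varepsilon_{m(x)} \text{ for some small } r\}$ is open of full Lebesgue measure, because $\Leb$-a.e.\ $x$ is both a Lebesgue point of $\nabla u$ and a point where $|D^{s}u|$ has zero density. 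The main obstacle, in my view, is the Caccioppoli inequality: obtaining it with the $E$-excess (rather than the classical $L^{2}$-excess) while simultaneously absorbing the singular part $D^{s}u$ requires delicate use of area-strict approximation and the fact that $E^{\infty}(z)=|z|$, features that have no analogue in the $p>1$ theory.
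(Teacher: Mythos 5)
Your overall architecture (shifted integrand and G\aa rding-type ellipticity from $\mathrm{(H2)}$, Evans-style Caccioppoli inequality of the second kind via hole-filling, harmonic approximation, excess decay iteration with the $E$-excess, then Schauder upgrading to $\CC^{2,\alpha}$) coincides with the paper's. The Caccioppoli step you describe is essentially the paper's Proposition \ref{caccioppoli}, and the iteration and Schauder endgame match Propositions \ref{ite} and Theorem \ref{datheo}. However, there is a genuine gap at the harmonic approximation step, which is precisely the point the paper singles out as the crux. You propose the classical indirect blow-up argument: rescale a non-approximating sequence by $\sqrt{\Phi}$ and extract an area-strictly convergent subsequence whose limit is $\A$-harmonic. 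In the linear-growth $\BV$ setting this compactness is not available. Boundedness of the rescaled sequence holds only in $\BV$, and the compactness actually encoded by the Caccioppoli inequality (see Remark \ref{caccpt}) kills oscillation ($\nu_x=\delta_{\nabla u(x)}$) but \emph{not} concentration: the concentration measure $\lambda$ of the generated Young measure is merely comparable to $|D^{s}u|$, so the blow-up limit may retain an uncontrolled singular part and need not lie in a class where the constant-coefficient decay estimates apply. Moreover, area-strict convergence is a conclusion one must prove, not something one extracts from boundedness; nothing in your sketch supplies it. This is exactly why the paper replaces the indirect argument by a direct, quantitative one.

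The paper's substitute (Proposition \ref{approxharmprop}) is an explicit duality construction: one first uses the Fubini-type restriction Lemma \ref{bvrestrict} to pick a good radius on whose sphere $u$ restricts to a $\BV$ map, then the embedding $\BV(\partial B)\hookrightarrow \WW^{1/n,n/(n-1)}(\partial B)$ (Lemma \ref{bvembedding}, or its $n=2$ variant) to make sense of the boundary datum for the comparison map $h$ via Proposition \ref{exH}; with $\Psi=\tu-\tha$ one then solves the auxiliary system $-\di\,\A\nabla\Phi=T(\Psi)$ with the truncation $T$, obtains an $\LL^{\infty}$ bound on $\nabla\Phi$ from $\WW^{2,p}$ estimates with $p>n$, and tests the linearized extremality inequality with $\Phi$. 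This yields the quantitative estimate
\begin{equation*}
\dashint_{B} \! E\left(\frac{u-h}{R}\right) \dd x \;\leq\; C\left( \dashint_{B} \! E\bigl(D(u-a)\bigr)\right)^{q},\qquad q\in\bigl(1,\tfrac{n}{n-1}\bigr),
\end{equation*}
whose superlinear exponent $q>1$ is what makes the smallness self-improving in the iteration. Your sketch neither produces this power gain nor addresses how the boundary values of $h$ are controlled; without Lemmas \ref{bvrestrict} and \ref{bvembedding} the comparison problem (\ref{defh}) is not even well posed for a general $\BV$ minimizer. Finally, note that your assessment of where the difficulty lies is inverted relative to the paper: the Caccioppoli inequality with the $E$-excess goes through essentially as in Evans, while it is the linearization that requires the new idea.
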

It is important to note that without the smallness condition (\ref{sma}) we do not expect the minimizer to be regular in the sense 
of (\ref{keyregest}). This is a feature of the multi-dimensional vectorial case $n$, $N \geq 2$ rather than our assumptions, at least when
$n \geq 3$, $N \geq 2$. Indeed, for dimensions $n  \geq 3$, $N \geq 2$ there exists a regular variational integrand $F \colon \M \to \R$ 
(meaning that $F$ is $\CC^\infty$ smooth, has bounded second derivative $|F^{\prime \prime}| \leq L$ and $z \mapsto F(z)-\ell |z|^{2}$ is convex) that 
admits a Lipschitz but non-$\CC^1$ minimizer, see \cite{MoSa}. In higher dimensions, the minimizers of regular variational integrals can be more 
singular, for instance, non-Lipschitz when $n \geq 3$, $N \geq 5$ and unbounded when $n \geq 5$, $N \geq 14$, see 
\cite{SverakYan1,SverakYan2}.  
When $n=2$, $N \geq 2$ it is a result due to \textsc{Morrey} \cite{MorreyB} that minimizers of regular variational integrals must be smooth, but it is
not clear precisely how big the singular set $\Omega \setminus \Omega_u$ can be when $n \geq 3$, $N \geq 2$. Higher differentiability and 
\textsc{Gehring}'s lemma (in the adapted form \cite[Proposition 5.1]{GM1})) yield for regular variational problems that its Hausdorff dimension 
is strictly smaller than $n-2$, see \cite{BeFr,Giaquinta,Giusti,GRM} for a comprehensive discussion of this and related matters.
In the quasiconvex $p$-growth case these methods do not apply. The only result at present is \cite{KM} where it is shown that the singular set 
will be uniformly porous (and so in particular of outer Minkowski dimension strictly smaller than $n$) under the additional assumption that 
the minimizer is Lipschitz.

The underlying ideas for the proof of Theorem \ref{thm:main} have many sources and it is not easy to give proper credit. 
However, the proof strategy can be traced back to at least \textsc{De Giorgi} \cite{DeGiorgi1} and \textsc{Almgren} \cite{Almgren1,Almgren2} in their 
works on minimal surfaces in the parametric context of geometric measure theory. The first to adapt their strategy to the nonparametric case 
seem to be \textsc{Giusti \& Miranda} \cite{GiustiMiranda} and \textsc{Morrey} \cite{Morrey1}, who proved partial regularity for minimizers to
regular variational problems and weak solutions to certain nonlinear elliptic systems. The key step in these proofs is to establish a 
so-called excess decay estimate, which amounts to an integral expression of H\"{o}lder continuity. This is achieved by use of the very 
robust excess decay estimates that hold for solutions to linear elliptic systems with constant coefficients. Indeed, these excess decay 
estimates are then transferred to the minimizer/weak solution by means of a linearization procedure and Caccioppoli inequalities. In the presence
of convexity/monotonicity the required Caccioppoli inequalities are derived by use of the difference-quotient method in some form. This
method cannot be applied in the quasiconvex case. The difficulty was overcome by \textsc{Evans} \cite{Evans} who adapted an argument
used by \textsc{Widman} \cite{Widman} in another context to derive Caccioppoli inequalities of the second kind. Hereby he proved partial regularity 
of minimizers under controlled quadratic growth conditions (see \cite{Giusti} and \cite{Giaquinta} for the terminology).
Shortly afterwards \textsc{Fusco \& Hutchinson} \cite{FuHu} and \textsc{Giaquinta \& Modica} \cite{GiMo} extended the result to
minimizers of variational integrals with general integrands $F=F(x,u,\nabla u)$ of controlled $p$-growth in the $\nabla u$ variable for $p \geq 2$.
This was further extended by \textsc{Acerbi \& Fusco} \cite{AcerbiFusco1} to integrands of natural $p$-growth for $p \geq 2$. A more 
direct proof of this result was subsequently obtained by \textsc{Giaquinta} \cite{Gigrowth} who also established Caccioppoli inequalities for 
minimizers in the general case $F=F(x,u,\nabla u)$ with $p$-growth for $p \geq 2$. Let us remark that the main role of the Caccioppoli
inequalities in these proofs is to provide \emph{compactness} in some suitable context dependent sense. This is clearly seen in the blow-up 
arguments used in for instance \cite{Evans,FuHu,AcerbiFusco1}, and it was noticed by \textsc{Evans \& Gariepy} \cite{EG1} that it is possible 
to extract the necessary compactness information without explicitly going through a Caccioppoli inequality.
Partial regularity in the general subquadratic case was established by
\textsc{Carozza, Fusco \& Mingione} in \cite{CarozzaFuscoMingione}, and many interesting extensions have followed since then, these include
\cite{AcerbiFusco2,AcMi,CFPdN,FoMi,FuHu1,JCC,DLSV,DGK,DuMi,Hamburger,CH,schmidtpq}. The monograph \cite{Giusti} gives a good summary of 
the situation around the mid 90s. All the above results concern the case of variational integrals that are coercive on a Sobolev space 
$\WW^{1,p}$ for some $p>1$ and do not concern the linear growth case. The only previous partial regularity results in the 
multi-dimensional vectorial case for minimizers of variational integrals of linear growth were based on a method proposed by 
\textsc{Anzellotti \& Giaquinta} in \cite{AnGi}. While this method has been adapted by \textsc{Schmidt} \cite{Schmidt1,Schmidt2} to
cover also some degenerate convex cases, the method still crucially relies on convexity, and it cannot work for quasiconvex integrands. 
Further references on various interesting aspects of existence and regularity of minimizers in the BV context with a standard 
convexity assumption include \cite{BeSc1,Bildhauer,BildhauerFuchs,GiMoSo}.

\begin{remark}
The main point of Theorem \ref{thm:main} is that the smallness condition (\ref{sma}) under the hypotheses $\mathrm{(H0)}$, 
$\mathrm{(H1)}$, $\mathrm{(H2)}$ yields $\CC^1$ regularity of the minimizer near the point $x_0$. The fact that we obtain $\CC^{2,\alpha}$ 
regularity on $B_{r/2}(x_{0})$ for all $\alpha < 1$ is a standard outcome of this type of proof. In this connection we emphasize our 
hypothesis $\mathrm{(H0)}$ that is stronger than the usual assumption of $\CC^2$ that is normally used in this context. We invite 
the reader to check that our proof also yields $\CC^2$ regularity on $B_{r/2}(x_{0})$ of minimizers under the smallness condition 
(\ref{sma}) when $\mathrm{(H0)}$ is relaxed to 
$$
(\underline{\mathrm{H}0}) \hspace{1cm} F \mbox{ is } \CC^{2,\beta}_{\mathrm{loc}}
$$ 
for some $\beta > 1-\tfrac{1}{n}$. However, the proof does seem to require a local smoothness assumption on the integrand 
that is stronger than $\CC^2$ and it is even unclear if one can relax it beyond $(\underline{\mathrm{H}0})$. 
\end{remark}
As indicated above, we prove Theorem \ref{thm:main} by adapting the linearization procedure and Caccioppoli inequalities to
the linear growth $\BV$ scenario. In doing this there are a number of difficulties that must be overcome. The main difficulty 
turns out to be the linearization procedure, where one cannot work in the natural energy space $\WW^{1,2}$ for the linear elliptic 
system that corresponds to a suitable second Taylor polynomial of the integrand. This happens already in the case of subquadratic
growth integrands on Sobolev space $\WW^{1,p}$, but the situation in the linear growth case is more severe as it by its very nature
must be degenerate at infinity. The usual ways for implementing this step do seem to require modification. Our variant consists in 
an explicit construction of a test map that upon use delivers the required estimate. We believe this approach could be a useful 
alternative also in the standard $p$-growth case, and intend to return to this and other applications in future work.
The Caccioppoli inequality of the second kind is established following the proof given by \textsc{Evans} \cite{Evans} and 
presents no problem. It is however important to emphasize that in the linear growth case these Caccioppoli inequalities do not
allow us to establish a reverse H\"{o}lder inequality for the gradient and so we cannot prove higher integrability by use of Gehring's Lemma.
Indeed, such higher integrability is ruled out by a counterexample due to \textsc{Buckley \& Koskela} \cite{BuKo}. This can also be directly seen from 
the example of the sign function on $(-1,1)$ which \emph{does satisfy a Caccioppoli inequality} and belongs to $\bv (-1,1) \setminus \WW^{1,1}(-1,1)$. 
A brief discussion of the compactness that can be inferred from a Caccioppoli inequality of the second kind is contained in Remark \ref{caccpt} below.
We refer the interested reader to \cite{FG} for more details on this, but remark here that it is for this reason that we have so far
not been able to treat the case of minimizers for the general linear growth case $F=F(x,u,\nabla u)$. 

Finally we note that the proof of Theorem \ref{thm:main} is fairly robust. However, in view of the failure of Korn's inequality in $\LL^1$, and 
its consequence, that the space of maps of \emph{bounded deformation} $\mathrm{BD}$ is strictly larger than $\BV$, the extension of our results 
to a $\mathrm{BD}$ context under natural assumptions is not immediate. The main difficulty in transferring the proofs is that $\mathrm{BD}$ maps 
do not have an obvious \emph{Fubini property} as do $\BV$ maps (see Lemma \ref{bvrestrict}). Nevertheless this obstacle can be overcome and the 
first author has extended some of the results presented here to $\mathrm{BD}$ in his DPhil thesis \cite{FG}.

\subsection{Organization of the paper}
In Section~\ref{sec:prelims} we fix notation, collect basic facts about $\bv$-functions and record various auxiliary estimates.
We mention here in particular Subsection 2.5 on quasiconvexity that, besides recalling the relevant definitions and elementary facts, 
also makes explicit the very flexible and possibly \emph{nonconvex} nature of integrands satisfying the hypotheses $\mathrm{(H0)}$, $\mathrm{(H1)}$, 
$\mathrm{(H2)}$. Section~\ref{minseq} contains the proof of Proposition \ref{sharpening} that, as mentioned above, clarifies the role of our
strong quasiconvexity assumption $\mathrm{(H2)}$. The subsequent Section~\ref{sec:main} is devoted to the proof of Theorem~\ref{thm:main}
that we have spelled out into 5 steps, each presented in a subsection. Probably the most interesting aspect of the proof is contained in Subsection
4.3 on approximation by harmonic maps, alas the linearization procedure. Finally, we end the paper by briefly indicating possible extensions
and variants of Theorem \ref{thm:main} that can be easily established by variants of the proof given in Section~\ref{sec:main}.

\subsection*{Acknowledgments}
Both authors gratefully acknowledge the hospitality and financial support of the Max-Planck-Institute for Mathematics in the 
Natural Sciences during a stay in Leipzig in Spring 2017, where large parts of this project were concluded. The first author 
moreover acknowledges financial support by the EPSRC throughout his doctoral studies at Oxford during 2013--17 and the Hausdorff Centre of 
Mathematics in Bonn for his current postdoc position.

\section{Preliminaries}\label{sec:prelims}

\subsection{Functions of measures}\label{sec:functionsofmeasures}
Here we fix the notation and recall background facts about measures. Our reference for measure theory is \cite{AFP} whose notation and 
terminology we also follow. Let $\mathbb{H}$ be a finite dimensional Hilbert space and let $\mu$ be an $\mathbb{H}$--valued Radon measure on 
the open subset $\Omega$ of $\Rn$. Its total variation measure, denoted $| \mu |$ and defined using the norm of $\mathbb{H}$, is a nonnegative 
(possibly infinite) Radon measure on $\Omega$. We say that $\mu$ is a bounded Radon measure if it has finite total variation on $\Omega$:
$| \mu | (\Omega ) < \infty$. With respect to the $n$--dimensional Lebesgue measure $\mathscr{L}^{n}$ we have the Lebesgue--Radon--Nikod\'{y}m 
decomposition of $\mu$:
$$
\mu=\frac{\dif\mu}{\dif\mathscr{L}^{n}}\mathscr{L}^{n}+\frac{\dif\mu}{\dif |\mu^{s}|}\mu^{s}.
$$
For a Borel function $f\colon \Omega \times \mathbb{H} \to \R$ satisfying for some constant $c \geq 0$ the linear growth, or $1$-growth, condition 
$|f(x,z)| \leq c\bigl( |z|+1 \bigr)$ for all $(x,z) \in \Omega \times \mathbb{H}$ we define the (upper) recession function as 
\begin{equation}\label{urecession}
f^{\infty}(x,z):=\limsup_{\stackrel{x^{\prime} \to x, z^{\prime} \to z}{t \to \infty}} \frac{f(x^{\prime},tz^{\prime})}{t} ,\qquad (x,z) \in\Omega \times \mathbb{H}. 
\end{equation}
Hereby $f^{\infty} \colon \Omega \times \mathbb{H} \to \R$ is Borel, satisfies the growth condition $|f^{\infty}(x,z)| \leq c|z|$
for all $(x,z) \in \Omega \times \mathbb{H}$ and is positively $1$-homogeneous in its second argument: $f^{\infty}(x,tz)=tf^{\infty}(x,z)$ for 
$t \geq 0$. For $\mu$, $f$ as above we define the signed Radon measure $f(\cdot, \mu )$ by
prescribing for each Borel set $A$ whose closure is compact and contained in $\Omega$ that
$$
\int_{A} \! f(\cdot , \mu ) := \int_{A} \! f\left( \cdot \, , \frac{\dif\mu}{\dif\mathscr{L}^{n}} \right) \, 
\dd  \mathscr{L}^{n} + \int_{A} \! f^{\infty} \left(\cdot \, , \frac{\dif\mu}{\dif |\mu^{s}|}\right) \, \dd |\mu^{s}| .
$$
When $\mu$ is a bounded Radon measure the above formula extends to all Borel sets $A \subseteq \Omega$ and we easily check that
it hereby defines a bounded Radon measure on $\Omega$. When, in addition to the above, $f$ is assumed continuous and the limes superior in 
(\ref{urecession}) is a limit for all $(x,z)$, then we say that $f$ admits a regular recession function. It is then easily seen that
$f^{\infty}$ must be continuous too (as a locally uniform limit of continuous functions). Note that the function $f=1_{\Omega} \otimes E$
satisfies the above conditions and admits a regular recession function, $f^{\infty}=1_{\Omega} \otimes | \cdot |$. In fact, as is easily seen,
any continuous function $f \colon \Omega \times \mathbb{H} \to \R$ satisfying the above $1$-growth condition and so $z \mapsto f(x,z)$ is 
convex, admits a regular recession function. 

We apply in particular the above notation to functions that do not depend explicitly on $x$, so $f\colon \mathbb{H} \to \R$, and in this case
we write interchangably $f(\mu )(A)$ and $\int_{A} \! f(\mu )$ for the measure. This notation is consistent in the sense that for $f= | \cdot |$, 
$f(\mu )$ is simply the total variation measure of $\mu$ and for $f=E$, $f(\mu )+\Leb$ is the total variation measure of the $\mathbb{H} \times \R$-valued 
measure $(\mu , \Leb )$.
It is well-known that these two functionals give rise to useful notions of convergence for sequences of Radon measures. For bounded
$\mathbb{H}$-valued Radon measures on $\Omega$ we say that $\mu_j \to \mu$ strictly on $\Omega$ iff $\mu_j \wstar \mu$ in $\CC_{0}( \Omega, \mathbb{H})^{\ast}$
and $| \mu_{j}|(\Omega ) \to | \mu |(\Omega )$. A slightly stronger mode of convergence is $E$-strict or area-strict convergence on $\Omega$:
$\mu_j \wstar \mu$ in $\CC_{0}( \Omega ,\mathbb{H})^{\ast}$ and $\int_{\Omega} \! E(\mu_{j}) \to \int_{\Omega} \! E(\mu )$. Any Radon measure can 
be area-strictly approximated by smooth maps using mollification and a well-known result of Reshetnyak \cite{Reshetnyak} (and \cite[Appendix]{KR1}) 
states that for a continuous function $f \colon \Omega \times \mathbb{H} \to \R$ of $1$-growth and admitting a regular recession function we have 
$$
\int_{\Omega} \! f( \cdot , \mu_{j})  \to \int_{\Omega} \! f( \cdot , \mu )
$$
whenever $\mu_j \to \mu$ area-strictly on $\Omega$. Finally we shall often use the short-hand
$$
\int_{\Omega} \! f(\mu -z) := \int_{\Omega} \! f \bigl( \mu - z \mathscr{L}^{n} \bigr)
$$
for $z \in \mathbb{H}$ and $\mathbb{H}$-valued Radon measures $\mu$.

\subsection{Mappings of bounded variation}
Our reference for maps of bounded variation is \cite{AFP} and we follow the notation and terminology used there. Here we briefly recall
a few definitions and background results.

Let $\Omega$ be a bounded, open subset of $\Rn$. We say that an integrable map $u\colon\Omega\to\RN$ has bounded variation if its distributional 
gradient can be represented by a bounded $\M$--valued Radon measure, that is, if 
\begin{align*}
|Du|(\Omega):=\sup\left\{\int_{\Omega}u\di(\varphi)\dif x\colon\;\varphi\in \CC_{c}^{1}(\Omega ,\M ),\;|\varphi|\leq 1 \right\}<\infty. 
\end{align*}
Here and in what follows, the divergence operator, $\di$, applied to $\R^{N\times n}$--valued distributions is understood to act row--wise.
The space of maps of bounded variation is denoted by $\bv (\Omega , \RN )$ and it is a Banach space under the norm $\|v\|_{\bv}:=\|v\|_{\LL^{1}}+|\D v|(\Omega)$.
We shall use freely the results from \cite{AFP} for such maps, including in particular Poincar\'{e} and Sobolev type inequalities. 

We stress that we throughout the paper consider integrable maps in terms of their precise representatives that we define as follows. 
Assume $u \in \LL^{1}_{\mathrm{loc}}( \Omega , \mathbb{H})$, where as in the previous subsection $\mathbb{H}$ denotes a finite dimensional Hilbert 
space. We say that $u$ has approximate limit $y \in \mathbb{H}$ at $x_{0} \in \Omega$, and write
$$
\mathrm{ap}\lim_{x \to x_{0}} u(x) = y
$$
provided that
$$
\lim_{r \searrow 0} \dashint_{B_{r}(x_{0})} \! |u(x)-y| \, \dd x = 0.
$$
The set $S_u$ of points in $\Omega$ where no such limit exists is the approximate discontinuity set for $u$: 
$S_{u} = \{ x \in \Omega : \, u \mbox{ has no approximate limit at } x \}$. It is an $\Leb$ negligible
Borel set and the precise representative is defined for each $x \in \Omega \setminus S_{u}$ by (a slight abuse of notation):
$$
u(x) :=  \mathrm{ap}\lim_{x^{\prime} \to x} u(x^{\prime}).
$$
Then $u \colon \Omega \setminus S_{u} \to \mathbb{H}$ is Borel measurable, and it is not so important for the developments of this paper how we define
the precise representative on the set $S_u$. Note that when $(\rho_{\varepsilon})_{\varepsilon > 0}$ is a standard smooth mollifier and 
$u \in \LL^{1}_{\mathrm{loc}}(\Omega , \mathbb{H} )$, then $u_{\varepsilon} = \rho_{\varepsilon} \ast u$ is $\CC^{\infty}$ on 
$\Omega_{\varepsilon} = \{ x \in \Omega : \, \mathrm{dist}(x,\partial \Omega ) > \varepsilon \}$ and $u_{\varepsilon}(x) \to u(x)$ as $\varepsilon \searrow 0$
for each $x \in \Omega \setminus S_u$ (as well as locally in $\LL^1$ on $\Omega$). 
When $u$ is of bounded variation the above convergence holds in a stronger sense, though not
in the $\BV$ norm defined above. Partly for this reason it is useful to consider other modes of convergence too. We say that a sequence $(u_{k})$ in 
$\BV (\Omega , \RN )$ converges to $u\in\BV (\Omega , \RN )$ in the weak*--sense if $u_{k}\to u$ strongly in $\LL^{1}(\Omega , \RN )$ and 
$D u_{k}\wstar D u$ in $\CC_{0}(\Omega ,\M )^{\ast}$ as $k\to\infty$. 
We further say that $(u_{k})$ converges to $u$ in the $\BV$ strict sense on $\Omega$ if $u_{k}\wstar u$ and 
$|D u_{k}|(\Omega)\to |D u|(\Omega)$ as $k\to\infty$. Lastly, we say that $(u_{k})$ converges to $u$ in the 
$\BV$ area--strict sense on $\Omega$ if $u_{k}\wstar u$ and 
$$
\int_{\Omega} \! E(Du_{k}) \to \int_{\Omega} \! E(Du)
$$
as $k \to \infty$. We recall that smooth maps are dense in $\BV (\Omega ,\RN )$ in the $\BV$ area--strict sense, and more precisely:

\begin{lemma}\label{Eapprox}
Let $B=B_{R}(x_{0})$ be a ball and $u \in \BV (B,\RN )$. Then there exists a sequence $(u_{j} )$ of $\CC^{\infty}$ maps $u_{j}\colon B \to \RN$,
each of Sobolev class $\WW^{1,1}(B, \RN )$, satisfying $u_{j}|_{\partial B} = u|_{\partial B}$ and so $u_{j} \to u$ $\BV$ area--strictly on $B$.
\end{lemma}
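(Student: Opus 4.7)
The plan is to construct the approximating sequence by a variable-scale mollification in the spirit of Anzellotti and Giaquinta, where the mollification parameter vanishes as $x$ approaches $\partial B$. This is the standard device for producing smooth approximants that preserve the boundary trace while retaining the sharp density properties in $\BV$.

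Concretely, I would set $d(x) := R - |x - x_{0}|$ and, for $\varepsilon_{j} \searrow 0$, choose a smooth function $h_{j} \colon B \to (0, \infty)$ with $h_{j}(x) \leq \min(\varepsilon_{j}, d(x)/2)$ and $|\nabla h_{j}| \leq c$ for an absolute constant $c$; for example, one mollifies $x \mapsto \min(\varepsilon_{j}, d(x)/2)$ on a scale much smaller than $d(x)$. Fixing a standard mollifier $\rho \in \CC_{c}^{\infty}(\ball(0,1))$, the approximants are defined by
\[
u_{j}(x) \;:=\; \int_{\R^{n}} \rho(y) \, u\bigl(x + h_{j}(x)y\bigr) \, \dd y \;=\; \int_{B} h_{j}(x)^{-n} \rho\!\left(\tfrac{z-x}{h_{j}(x)}\right) u(z) \, \dd z.
\]
Since $B(x,h_{j}(x)) \Subset B$, $u_{j}$ is well-defined, and smoothness of $h_{j}$ gives $u_{j} \in \CC^{\infty}(B, \RN)$. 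A change of variables $z = x + h_{j}(x)y$ (for fixed $y$), together with the Fubini theorem and the bound on $\nabla h_{j}$, yields $\|\nabla u_{j}\|_{\LL^{1}(B)} \leq C |Du|(B)$, so $u_{j} \in \WW^{1,1}(B, \RN)$ uniformly in $j$.

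Trace preservation follows from the fact that $h_{j}(x) \to 0$ as $\dista(x, \partial B) \to 0$: for $\mathcal{H}^{n-1}$-a.e.\ $y \in \partial B$, the boundary averages $|\ball(y, \tau) \cap B|^{-1} \int_{\ball(y,\tau) \cap B} u_{j} \, \dd x$ and the analogous averages of $u$ both converge to the same limit as $\tau \searrow 0$ by a Fubini calculation, since the mollification kernel at $x$ is supported in an $h_{j}(x)$-neighbourhood that collapses at the boundary. Convergence $u_{j} \to u$ in $\LL^{1}(B)$ is then standard (each mollification scale goes to zero pointwise in $B$, and $\LL^{1}$ domination holds on interior compacta with boundary layers of small measure). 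For area-strict convergence, lower semicontinuity gives $\liminf_{j} \int_{B} E(Du_{j}) \geq \int_{B} E(Du)$, and the matching upper bound is obtained by applying Jensen's inequality to the convex integrand $E$ together with the $|\det(I + y \otimes \nabla h_{j})| = 1 + O(\varepsilon_{j})$ change of variables used in Step~3, so that $\int_{B} E(\nabla u_{j}) \leq \int_{B} E(Du) + o(1)$ as $j \to \infty$.

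The main technical obstacle is calibrating $h_{j}$ so that \emph{simultaneously} (i) $u_{j}$ is smooth with $\WW^{1,1}$ norm controlled by $|Du|(B)$, (ii) the trace is genuinely preserved, and (iii) the sharper \emph{area}-strict (not merely strict) convergence holds. The first two requirements pull in opposite directions, since $h_{j}$ must vanish at the boundary yet remain Lipschitz with a uniform constant so that the Jacobian of the mollification transformation does not deteriorate. The third forces one to use the regular recession property of $E$ and a Reshetnyak-type continuity argument, which is why the variable scale must be taken $\LL^{\infty}$-small rather than merely pointwise small on $B$.
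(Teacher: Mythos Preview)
The paper does not give its own proof of this lemma; it simply cites \cite[Lemma~B.2]{Bildhauer} and \cite[Lemma~1]{KR2}. Your variable-scale mollification is precisely the Anzellotti--Giaquinta device underlying those references (usually implemented via a partition of unity on concentric annuli rather than a single smooth scale function, but the two are equivalent in spirit), so the overall strategy is the right one.

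There is, however, a genuine gap in your area-strict step. The claim $|\det(I+y\otimes\nabla h_{j})|=1+O(\varepsilon_{j})$ cannot hold globally under your constraints: since $h_{j}$ must descend from $\approx\varepsilon_{j}$ in the interior to $0$ on $\partial B$ across a layer of width $\sim\varepsilon_{j}$ (because $h_{j}\leq d/2$), one necessarily has $|\nabla h_{j}|\sim 1$ in that layer, so the Jacobian is $1+O(1)$ there, not $1+O(\varepsilon_{j})$. Taking $h_{j}$ to be $\LL^{\infty}$-small does nothing for $|\nabla h_{j}|$, which is what actually enters the Jacobian; this is the point your final paragraph flags but does not resolve. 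The standard remedy is to split $B$: on the interior region $\{d>C\varepsilon_{j}\}$ one has $h_{j}\equiv\varepsilon_{j}$, hence $\nabla h_{j}=0$, and Jensen applied to the convex $E$ gives $\int_{\{d>C\varepsilon_{j}\}}E(\nabla u_{j})\,\dd x\leq\int_{B}E(Du)+o(1)$ directly; on the boundary layer $\{d\leq C\varepsilon_{j}\}$ the crude estimate $\int E(\nabla u_{j})\,\dd x\leq C\,|Du|(\{d\leq C'\varepsilon_{j}\})+C\,\Leb(\{d\leq C\varepsilon_{j}\})\to 0$ suffices, using that $|Du|$ is a finite measure on the open ball. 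This splitting is exactly what the annular partition-of-unity proofs in the cited references carry out.
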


\noindent
See for instance \cite[Lemma B.2]{Bildhauer} or \cite[Lemma 1]{KR2} for a proof that works on general domains.

\begin{lemma}\label{Econt}
Let $G \colon \M \to \R$ be rank-one convex and of linear growth: $|G(z)| \leq c(|z|+1)$ for all $z \in \M$. 
If $u$, $u_{j} \in \BV (\Omega , \RN )$, where $\Omega$ is a bounded Lipschitz domain in $\Rn$, and 
$u_{j} \to u$ $\BV$ area--strictly on $\Omega$, then
$$
\int_{\Omega} \! G(Du_{j}) \to \int_{\Omega} \! G(Du) \quad \mbox{ as } \quad j \to \infty .
$$
\end{lemma}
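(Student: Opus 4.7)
The plan is to deduce this from the version of Reshetnyak's continuity theorem recalled in Subsection 2.1: if $f$ is continuous of linear growth on $\Omega\times\mathbb{H}$ and admits a regular recession function, then area-strict convergence $\mu_j\to\mu$ implies $\int_{\Omega}f(\cdot,\mu_j)\to\int_{\Omega}f(\cdot,\mu)$. Since $u_j\to u$ in the $\BV$ area-strict sense on $\Omega$ means by definition that $Du_j\to Du$ area-strictly as $\M$-valued Radon measures on $\Omega$, the entire task reduces to checking that $f := 1_{\Omega}\otimes G$ satisfies the hypotheses of Reshetnyak's theorem, i.e.\ that the rank-one convex integrand $G$ is continuous of linear growth and admits a regular recession.

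First I would observe that $G$ is Lipschitz continuous on $\M$. Rank-one convex functions are in particular separately convex, and a separately convex function of linear growth on the finite-dimensional space $\M$ is automatically locally Lipschitz, with Lipschitz constant controlled purely by the growth constant $c$. Since the bound is global in $z$, one obtains a global Lipschitz constant $L=L(c)>0$ for $G$ on all of $\M$.

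The decisive step is to verify that $G$ admits a regular recession: the limsup
\[
G^{\infty}(z):=\limsup_{t\nearrow\infty}\frac{G(tz)}{t}
\]
is actually a limit for every $z\in\M$, and the convergence is locally uniform in $z$. Along any rank-one line $z=\xi\otimes\eta$ through the origin, $t\mapsto G(tz)$ is convex by rank-one convexity, so the difference quotients $G(tz)/t$ are monotone for large $t$ and the limit exists. For a general matrix $z$ one decomposes $z$ as a sum of rank-one matrices and combines rank-one convexity with the global Lipschitz bound from the previous step to propagate existence of the limit; the same Lipschitz bound also yields local uniformity of the convergence. Positive $1$-homogeneity of $G^{\infty}$ is immediate, and Lipschitz continuity of $G^{\infty}$ follows by passing to the limit $t\to\infty$ in the Lipschitz estimate for $G(tz)/t$. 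A detailed treatment in essentially this rank-one convex framework can be found in \cite{KR1}.

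Once $G$ is shown to be continuous of linear growth and to admit a regular recession, Reshetnyak's theorem applied to $\mu_j=Du_j$ and $\mu=Du$ yields $\int_{\Omega}G(Du_j)\to\int_{\Omega}G(Du)$ as required. The main obstacle is the regular-recession step: existence of the limit defining $G^{\infty}$ for matrices of rank greater than one is not a formal consequence of convexity along a single rank-one line and genuinely exploits the full rank-one convex structure of $G$ on $\M$ together with the Lipschitz bound derived from the growth hypothesis.
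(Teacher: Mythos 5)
Your reduction to Reshetnyak's continuity theorem founders on the regular-recession step, and this is a genuine gap rather than a technicality. Rank-one convexity of $G$ gives convexity of $t\mapsto G(w+tz)$ only when $z$ has rank one, so monotonicity of the difference quotients --- and hence existence of $\lim_{t\to\infty}G(tz)/t$ --- is available only for rank-one matrices $z$. For $z$ of rank $\geq 2$ the restriction of $G$ to the line $\R z$ carries no convexity at all, and writing $z$ as a sum of rank-one matrices does not help: rank-one convexity relates values of $G$ along rank-one segments, not along the ray $\{tz : t>0\}$. The global Lipschitz bound (which you correctly derive, cf.\ (\ref{lip})) yields only that the family $z\mapsto G(tz)/t$ is equi-Lipschitz; by Arzel\`{a}--Ascoli every sequence $t_k\nearrow\infty$ then admits a locally uniformly convergent subsequence, but different subsequences may produce different limits, all of which agree on rank-one matrices without being forced to agree elsewhere. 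So your argument gives subsequential limits, not that the limsup defining $G^{\infty}$ is a limit; for general rank-one convex $G$ of linear growth this is not known to hold, which is exactly why the paper defines $F^{\infty}$ via a limsup. Consequently the hypotheses of the version of Reshetnyak's theorem you invoke (continuity of linear growth \emph{plus} a regular recession function) are not verified, and the reduction collapses at its decisive point.

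The paper does not prove the lemma directly but refers to \cite[Theorem 4]{KR1}, whose whole purpose is to dispense with the regular-recession hypothesis for rank-one convex integrands. The mechanism there is Alberti's rank-one theorem: the polar $\tfrac{\dif \D^{s}u}{\dif |\D^{s}u|}$ takes rank-one values $|\D^{s}u|$-a.e., so in the perspective formulation underlying area-strict convergence only the behaviour of $tG(z/t)$ as $t\searrow 0$ near rank-one matrices $z$ matters, and there the limit does exist, locally uniformly, by convexity along rank-one lines. To salvage your approach you would have to localize the continuity argument to the set where the polars actually live (and control the perspective function near $\{0\}\times\{\text{rank-one matrices}\}$), or follow the approximation argument of \cite{KR1}; the classical Reshetnyak theorem with a globally regular recession function is simply not applicable here.
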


\noindent
We refer to \cite[Theorem 4]{KR1} for a proof.

\begin{lemma}\label{bvrestrict}
For a ball $B = B_{R}(x_{0})$ let $u \in \BV (B , \RN )$. Then for $\mathscr{L}^1$ almost all radii $r \in (0,R)$ the pointwise restriction
$u|_{\partial B_{r}}$ coincides with the traces from $B_{r}$ and from $B \setminus \overline{B_{r}}$ of $u$
and is BV on $\partial B_{r}$. Furthermore, given two radii $0<r<s<R$ we can find a radius $t \in (r,s)$ such that
$u|_{\partial B_{t}}$ is as above and its total variation over $\partial B_{t}$ is bounded as
\begin{equation}\label{goodrestrict}
\int_{\partial B_{t}} \! |D_{\tau}(u|_{\partial B_{t}})| \leq \frac{c}{s-r}\int_{B_{s}\setminus \bar{B_r}} \! |Du|,
\end{equation}
where $c=c(n,N)$ is a constant and $D_{\tau}(u|_{\partial B_{t}})$ denotes the tangential derivative (see (\ref{tangent}) below).
\end{lemma}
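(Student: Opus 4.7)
The plan is to argue by slicing in polar coordinates about the center $x_0$. Introduce the diffeomorphism $\Phi \colon (0,R) \times \mathbb{S}^{n-1} \to B \setminus \{x_0\}$, $\Phi(t, \omega) = x_0 + t\omega$, and pull $u$ back to $\tilde{u} = u \circ \Phi$. In these coordinates the distributional derivative splits naturally into a radial part and a tangential part, $Du = \partial_r u \otimes \dd r + D_\tau u$ (where each piece is a measure), and on each sphere $\partial B_t$ the tangential derivative of the restriction $u|_{\partial B_t}$ corresponds to slicing $D_\tau u$ in the radial variable. This is a standard Fubini-type statement for $\BV$ maps obtained by disintegration of the measure $|Du|$ along the level sets of the smooth function $x \mapsto |x-x_0|$; see the slicing results in \cite[Section 3.11]{AFP}.

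From this disintegration I would read off the integral bound
\begin{equation*}
\int_{r}^{s} \! \int_{\partial B_{t}} \! |D_{\tau}(u|_{\partial B_{t}})| \, \dd t \;\leq\; c(n,N) \int_{B_{s} \setminus \overline{B_{r}}} \! |Du|
\end{equation*}
valid for any $0<r<s<R$, where the constant absorbs the Jacobian of $\Phi$ on the bounded annulus $r \leq t \leq s$ (one can avoid such a constant by working directly with the $\M$-valued measure $|D_\tau u|$ written in Cartesian coordinates, but a constant is admissible). An elementary pigeonhole/mean value argument then produces at least one radius $t \in (r,s)$ for which $u|_{\partial B_{t}} \in \BV(\partial B_t, \RN)$ with total tangential variation bounded by $\tfrac{c}{s-r}|Du|(B_s \setminus \overline{B_r})$, establishing (\ref{goodrestrict}).

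It remains to check that on such a sphere the pointwise precise representative $u|_{\partial B_t}$ agrees with the two-sided $\BV$ traces. The jump set $S_u$ is $\mathcal{H}^{n-1}$-$\sigma$-finite, and by the coarea-type estimate $\int_{0}^{R} \mathcal{H}^{n-1}(S_u \cap \partial B_t) \, \dd t \leq \mathcal{H}^{n-1}(S_u)<\infty$ only an $\Lebo$-null set of radii can carry positive $\mathcal{H}^{n-1}$-mass of $S_u$; similarly, the Cantor part $D^c u$ charges at most an $\Lebo$-null set of spheres. Excluding these radii, and excluding the $\Lebo$-null set where the slicing fails, one obtains from the standard trace theory on Lipschitz hypersurfaces that for every remaining $t$ both the interior and exterior traces exist and coincide $\mathcal{H}^{n-1}$-a.e. with the value at $x$ obtained from the Lebesgue-point definition of $u$.

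The main obstacle is the disintegration step: one must verify carefully that, for $\Lebo$-a.e. $t \in (0,R)$, the tangential component $D_\tau u \lfloor \partial B_t$ really coincides with the distributional derivative of the one-codimensional $\BV$ map $u|_{\partial B_t}$. This requires simultaneously handling the absolutely continuous part (via Fubini), the jump part (which, after the null set exclusions above, contributes only the genuine jumps of $u|_{\partial B_t}$ along its own jump set in $\partial B_t$), and the Cantor part (via Alberti's rank-one theorem and disintegration). Once this identification is in place, the tangential total variation estimate and the mean-value selection of $t$ are routine.
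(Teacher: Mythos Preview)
Your approach via direct disintegration of $Du$ along the spherical foliation is different from the paper's and, while feasible in principle, is considerably heavier. The paper avoids the ``main obstacle'' you identify entirely by arguing through mollification: one sets $u_\varepsilon = \rho_\varepsilon \ast u$, computes the tangential derivative of the smooth map $u_\varepsilon|_{\partial B_t}$ explicitly as $\nabla u_\varepsilon(x)\bigl(I - \tfrac{x \otimes x}{t^2}\bigr)$, and integrates in polar coordinates to get
\[
\int_r^s \int_{\partial B_t} |\nabla_\tau (u_\varepsilon|_{\partial B_t})| \, \dd \mathcal{H}^{n-1} \, \dd t \leq \int_{B_s \setminus B_r} |Du_\varepsilon|.
\]
Strict convergence on annuli, Fatou's lemma in $t$, and lower semicontinuity of the total variation on each sphere $\partial B_t$ (using that $u_\varepsilon \to u$ in $\LL^1(\partial B_t,\RN)$ for a.e.\ $t$ by the trace theorem, and pointwise $\mathcal{H}^{n-1}$-a.e.\ once the null set $\{t:\mathcal{H}^{n-1}(S_u\cap\partial B_t)>0\}$ is excluded) then give $u|_{\partial B_t} \in \BV(\partial B_t,\RN)$ together with the required bound. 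No slicing of the Cantor or jump parts of $Du$ is ever performed, and Alberti's rank-one theorem plays no role.

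Two specific points about your sketch. First, \cite[Section 3.11]{AFP} treats one-dimensional sections (restrictions to lines parallel to a coordinate axis), not codimension-one slices along curved hypersurfaces, so the cited result does not directly deliver the identification you need. Second, the step you flag as the main obstacle---that for $\Lebo$-a.e.\ $t$ the sliced measure $D_\tau u \lfloor \partial B_t$ equals the distributional tangential derivative of the pointwise restriction $u|_{\partial B_t}$---is a genuine theorem requiring its own argument; in particular the very existence of the restriction as a $\BV$ map on the sphere must be established before one can speak of its derivative. The paper's mollification route sidesteps this because it only ever needs the \emph{inequality}
\[
\int_{\partial B_t} |D_\tau(u|_{\partial B_t})| \leq \liminf_{\varepsilon\searrow 0} \int_{\partial B_t} |\nabla_\tau (u_\varepsilon|_{\partial B_t})| \, \dd \mathcal{H}^{n-1},
\]
which is immediate from lower semicontinuity once $\LL^1$ convergence on the sphere is known. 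Since the lemma only asserts an upper bound on the tangential variation, this one-sided estimate suffices and makes the argument essentially elementary.
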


\begin{proof}
We can assume that $x_0 = 0$. For a standard smooth mollifier $( \rho_{\varepsilon})_{\varepsilon > 0}$
we put $u_{\varepsilon} = \rho_{\varepsilon} \ast u$. Then $u_{\varepsilon} \in \CC^{\infty}(B_{R-\varepsilon} , \RN )$ and
we have $u_{\varepsilon} \to u$ $\BV$ strictly on $B_{s'}\setminus \overline{B}_{r'}$ for any radii $r \leq r' < s' \leq s$ with
$|Du|(\partial B_{s'} \cup \partial B_{r'})=0$. Furthermore, $u_{\varepsilon}(x) \to u(x)$ for each 
$x \in B \setminus S_{\bar{u}}$ as $\varepsilon \searrow 0$.

The tangential derivative of $u_{\varepsilon}$ at $x \in \partial B_{t}$ on 
the sphere $\partial B_{t}$ is given by
\begin{equation}\label{tangent}
\nabla_{\tau}(u_{\varepsilon}|_{\partial B_{t}})(x) = \nabla u_{\varepsilon}(x)\bigl( I - \tfrac{x \otimes x}{t^{2}} \bigr)
\end{equation}
so by integration in polar coordinates
\begin{eqnarray*}
\int_{r}^{s} \! \int_{\partial B_{t}} \! |D_{\tau}(u_{\varepsilon}|_{\partial B_{t}})| \, \dd t &=& 
\int_{r}^{s} \! \int_{\partial B_{t}} \! |\nabla_{\tau}(u_{\varepsilon}|_{\partial B_{t}})| \, \dd \mathcal{H}^{n-1} \, \dd t\\
&\leq& \int_{r}^{s} \! \int_{\partial B_{t}} \! |\nabla u_{\varepsilon}| \, \dd \mathcal{H}^{n-1} \, \dd t\\
&=& \int_{B_{s} \setminus B_{r}} \! | \nabla u_{\varepsilon}| \, \dd x = \int_{B_{s} \setminus B_{r}} \! |Du_{\varepsilon}|.
\end{eqnarray*}
The set $M = \{ t \in (r,s) : \, \mathcal{H}^{n-1}(S_{u} \cap \partial B_{t}) > 0 \}$ is $\mathscr{L}^1$ negligible, and for $t \in (r,s)\setminus M$
we have that, as $\varepsilon \searrow 0$,  $u_{\varepsilon}(x) \to u(x)$ for $\mathcal{H}^{n-1}$ a.e. $x \in \partial B_t$ and by the trace theorem
(see \cite{AFP} Th. 3.77) also in $\LL^{1}(\partial B_{t}, \RN )$.
Next, Fatou's lemma and the strict convergence give for the radii $r \leq r' < s' \leq s$ selected above that
$$
\int_{r'}^{s'} \! \liminf_{\varepsilon \searrow 0} \int_{\partial B_{t}} \! |\nabla_{\tau}(u_{\varepsilon}|_{\partial B_{t}})| 
\, \dd \mathcal{H}^{n-1} \, \dd t \leq \int_{B_{s}\setminus \bar{B_{r}}} \! |Du| ,
$$
and hence taking $r' \searrow r$ and $s' \nearrow s$ we get
$$
\int_{r}^{s} \! \liminf_{\varepsilon \searrow 0} \int_{\partial B_{t}} \! |\nabla_{\tau}(u_{\varepsilon}|_{\partial B_{t}})| 
\, \dd \mathcal{H}^{n-1} \, \dd t \leq \int_{B_{s}\setminus \bar{B_{r}}} \! |Du| .
$$
For each $t \in (r,s) \setminus M$ such that
$$
\liminf_{\varepsilon \searrow 0} \int_{\partial B_{t}} \! |\nabla_{\tau}(u_{\varepsilon}|_{\partial B_{t}})| 
\, \dd \mathcal{H}^{n-1} < \infty
$$
which is $\mathscr{L}^1$ almost all $t$, we can find a subsequence $\varepsilon_{j}=\varepsilon_{j}(t) \searrow 0$ such that
$u_{\varepsilon_{j}}|_{\partial B_t} \to u|_{\partial B_t}$ in $\LL^{1}(\partial B_{t} , \RN )$ and pointwise $\mathcal{H}^{n-1}$ a.e., and
$$
\lim_{j \to \infty} \int_{\partial B_{t}} \! |\nabla_{\tau}(u_{\varepsilon_{j}}|_{\partial B_{t}})| \, \dd \mathcal{H}^{n-1} < \infty .
$$
This implies that $u|_{\partial B_{t}} \in \BV (\partial B_{t}, \RN )$. Finally, the last assertion follows because we can select 
$t \in (r,s) \setminus M$ so
$$
\liminf_{\varepsilon \searrow 0} \int_{\partial B_{t}} \! |\nabla_{\tau}(u_{\varepsilon}|_{\partial B_{t}})| 
\, \dd \mathcal{H}^{n-1} \leq \tfrac{2}{s-r} \int_{B_{s}\setminus \bar{B_{r}}} \! |Du|,
$$
and then conclude by selecting a suitable subsequence $\varepsilon_{j} \searrow 0$ as above.
It follows that the pointwise restriction of the precise representative $u|_{\partial B_t} \in \BV (\partial B_{t}, \RN )$ coincides
$\mathcal{H}^{n-1}$ a.e. with the traces of $u$ from $B_{t}$ and from $B_{s}\setminus \overline{B}_{t}$ and that (\ref{goodrestrict}) holds.
\end{proof}

\noindent
For the statement of the next result we recall that for a ball $B=B_{R}(x_{0})$ in $\Rn$, $s \in (0,1)$ and $p \in (1,\infty )$ the 
Sobolev-Slobodecki\u{\i} spaces $\WW^{s,p}(B, \RN )$ and  $\WW^{s,p}(\partial B , \RN )$ consist of all integrable maps $u \colon B \to \RN$,
$v \colon \partial B \to \RN$ for which the Gagliardo norm
$$
\| u \|_{\WW^{s,p}(B , \RN )} = \left( \| u \|_{\LL^{p}(B, \RN )}^{p}+| u |_{\WW^{s,p}(B , \RN )}^{p} \right)^{\frac{1}{p}},
$$
$$
\| v \|_{\WW^{s,p}(\partial B , \RN )} = \left( \| v \|_{\LL^{p}(\partial B, \RN )}^{p}+| v |_{\WW^{s,p}(\partial B , \RN )}^{p} \right)^{\frac{1}{p}}
$$
is finite, respectively. Here we define the corresponding semi-norms as, respectively,
$$
| u |_{\WW^{s,p}(B , \RN )} = \left( \int_{B} \int_{B} \! \frac{|u(x)-u(y)|^{p}}{|x-y|^{n-1+sp}} \, \dd x \, \dd y \right)^{\frac{1}{p}}.
$$
and
$$
| v |_{\WW^{s,p}(\partial B , \RN )} = \left( \int_{\partial B} \int_{\partial B} \! \frac{|v(x)-v(y)|^{p}}{|x-y|^{n-1+sp}} \, \dd \mathcal{H}^{n-1}(x) \, 
\dd \mathcal{H}^{n-1}(y) \right)^{\frac{1}{p}}.
$$
\begin{lemma}\label{bvembedding}
Assume the dimension $n \geq 3$.
Let $B=B_{R}(x_{0})$ be a ball and $v \in \BV (\partial B,\RN )$. Then $v \in \WW^{\frac{1}{n},\frac{n}{n-1}}(\partial B , \RN )$ and
$$
\left( \dashint_{\partial B} \int_{\partial B} \! \frac{|v(x)-v(y)|^{\frac{n}{n-1}}}{|x-y|^{n-1+\frac{1}{n-1}}} \, \dd \mathcal{H}^{n-1}(x) \, 
\dd \mathcal{H}^{n-1}(y) \right)^{1-\frac{1}{n}} \leq cR^{1-\frac{1}{n}}\dashint_{\partial B} \! |D_{\tau}v| ,
$$
where $c=c(n,N)$ is a constant.
\end{lemma}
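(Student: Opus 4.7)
The plan is to establish the claim as a critical fractional Sobolev embedding $\BV \hookrightarrow \WW^{1/n, n/(n-1)}$ on the sphere $\partial B$, recast in a scale-invariant form. First I observe that, since both sides of the asserted inequality scale identically under $v \mapsto v(R\cdot)$ (the factor $R^{1-1/n}$ on the right is precisely what balances the mismatch in the averaged Gagliardo semi-norm against the averaged total variation), it suffices to treat the unit sphere and then transfer back by homogeneity. Second, I reduce to smooth $v$: by mollification on $\partial B$ via a partition of unity subordinate to a fixed smooth atlas combined with a standard Euclidean mollifier (or, alternatively, by heat-semigroup smoothing in the spirit of Lemma \ref{Eapprox}) one finds $v_\varepsilon \in \CC^\infty(\partial B,\RN)$ with $v_\varepsilon \to v$ in $\LL^1(\partial B,\RN)$ and $\int_{\partial B}|\nabla_\tau v_\varepsilon|\,\dd \Ha \to |D_\tau v|(\partial B)$; Fatou's lemma applied to the nonnegative integrand on the left-hand side then allows passage from smooth maps to BV limits without any loss of constants.

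The analytic core is a critical Sobolev estimate for smooth real-valued functions (the vector-valued case follows componentwise up to $N$-dependent factors). Writing $u = u^+ - u^-$ and using the layer-cake representation $u^\pm(x) = \int_0^\infty \mathbf{1}_{\{u^\pm > t\}}(x)\,\dd t$, Minkowski's integral inequality in $L^{n/(n-1)}(\dd\mu)$ with
\[ \dd\mu(x,y) = \frac{\dd\Ha(x)\,\dd\Ha(y)}{|x-y|^{n-1+1/(n-1)}} \]
bounds the Gagliardo semi-norm of $u$ on the left-hand side by $\int_0^\infty \|\mathbf{1}_{\{u^\pm>t\}}(\cdot_x)-\mathbf{1}_{\{u^\pm>t\}}(\cdot_y)\|_{L^{n/(n-1)}(\dd\mu)}\,\dd t$. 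The problem thereby reduces to showing, for every set $E \subset \partial B$ of finite perimeter, the indicator bound $\|\mathbf{1}_E(\cdot_x)-\mathbf{1}_E(\cdot_y)\|_{L^{n/(n-1)}(\dd\mu)} \leq c(n)P(E)$. Localising through a finite smooth atlas reduces this in turn to the Euclidean assertion in $\R^{n-1}$, where the classical estimate $|E\triangle (E+h)|\leq \min(|h|P(E),\,2|E|)$, combined with a split of $\int_{\R^{n-1}}\frac{|E\triangle(E+h)|}{|h|^{n-1+1/(n-1)}}\,\dd h$ at a free threshold $|h|=h_0$ and subsequent optimisation in $h_0$, yields the dimensional bound $P(E)^{1/(n-1)}|E|^{(n-2)/(n-1)}$; the sharp isoperimetric inequality $|E|^{(n-2)/(n-1)}\leq c(n)P(E)$ then upgrades this to $c(n)P(E)^{n/(n-1)}$. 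The coarea identity $\int_0^\infty P(\{u^\pm>t\})\,\dd t = |Du^\pm|(\partial B)$ finally converts the $t$-integral into the total variation and closes the smooth estimate, which by the previous step extends to $\BV$.

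The step I expect to require most care is the indicator estimate, and within it the transition from the Euclidean argument (which crucially exploits translations of $E$) to the spherical setting (which has no such structure). My plan is to handle this through a partition of unity on a fixed smooth atlas of $\partial B$, accepting the attendant book-keeping of chart-dependent constants, which are nevertheless purely dimensional. The hypothesis $n\geq 3$ plays a decisive role here: it ensures $n-1\geq 2$, which is exactly the regime in which the sharp $\BV$-isoperimetric inequality $|E|^{(n-2)/(n-1)}\leq c(n)P(E)$ is valid on $\partial B$ (for sets of at most half the total area, which one may always arrange by passing from $E$ to its complement without affecting $|\mathbf{1}_E(x)-\mathbf{1}_E(y)|$ nor $P(E)$).
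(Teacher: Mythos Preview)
Your argument is correct and is essentially the approach the paper indicates: the paper does not prove this lemma but refers to \cite[Lemma D.2]{BoBrMi} for the Euclidean embedding $\BV(\R^{n-1}) \hookrightarrow \WW^{\frac{1}{n},\frac{n}{n-1}}(\R^{n-1})$ and then invokes local coordinates and a partition of unity to transfer to $\partial B$. Your layer-cake/Minkowski reduction to indicators, followed by the symmetric-difference estimate and isoperimetric inequality, is precisely the content of the cited result, so you have simply unpacked what the paper outsources; the localisation step you flag as delicate is exactly the one the paper also leaves implicit.
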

We refer to \cite[Lemma D.2]{BoBrMi} for a proof that $\BV (\R^{n-1} )$ for dimensions $n \geq 3$ embeds into $\WW^{\frac{1}{n},\frac{n}{n-1}}(\R^{n-1})$.
Lemma \ref{bvembedding} can be recovered from this result by the usual arguments involving local coordinates and a partition of unity. Note that 
the embedding fails for dimension $n=2$: an indicator function for a circular arc has bounded variation on $\partial B$ but it is not of class 
$\WW^{\frac{1}{2},2}(\partial B)$. In the two-dimensional case we instead have an embedding into the larger $\LL^2$ based Nikolski\u{\i} space that 
we in this context may define as
$$
\mathrm{B}^{\tfrac{1}{2},2}_{\infty}(\partial B , \RN ) = \left\{ v \in \LL^{2}(\partial B , \RN ) : \, \sup_{0<|h|< R/2} \int_{\partial B} \! 
| v \left( \tfrac{x+h}{|x+h|}\right)-v(x)|^{2} \, \dd \mathcal{H}^{1}(x)/|h| < \infty \right\} .
$$ 
This definition is easily seen to be equivalent to the one obtained by transferring the usual definition on the interval $(-1,1)$ by local coordinates 
and a partition of unity. A proof of the aforementioned embedding can therefore be inferred from \cite[Lemma 38.1]{Tartar}. In combination with a 
Sobolev embedding result (see \cite[Lemma 22.2, (34.4) and Lemma 36.1]{Tartar} or \cite[Theorem 4.6.1(a)]{Triebel}) we then deduce:

\begin{lemma}\label{bvembedding2}
Assume the dimension $n=2$.
Let $B=B_{R}(x_{0}) \subset \R^2$ be a ball and $v \in \BV (\partial B,\RN )$. Then $v \in \WW^{1-\frac{1}{p},p}(\partial B , \RN )$ for each $p \in (1,2)$
and
$$
\left( \dashint_{\partial B} \int_{\partial B} \! \frac{|v(x)-v(y)|^{p}}{|x-y|^{p}} \, \dd \mathcal{H}^{1}(x) \, 
\dd \mathcal{H}^{1}(y) \right)^{\frac{1}{p}} \leq cR^{\frac{1}{p}}\dashint_{\partial B} \! |D_{\tau}v| ,
$$
where $c=c(N,p)$ is a constant.
\end{lemma}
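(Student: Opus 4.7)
The plan is to follow the two-step route sketched in the paragraph immediately preceding the statement: reduce to a fixed one-dimensional reference interval by an arclength parametrisation of $\partial B$ together with a finite partition of unity, and then concatenate the chain of embeddings
\begin{equation*}
\BV(-1,1) \hookrightarrow \mathrm{B}^{\frac{1}{2},2}_\infty(-1,1) \hookrightarrow \WW^{1-\frac{1}{p},p}(-1,1),\qquad p\in(1,2).
\end{equation*}
Transferring the resulting estimate back to $\partial B=\partial B_R(x_0)$ by the rescaling $\tilde v(\omega)=v(x_0+R\omega)$ will produce the factor $R^{\frac{1}{p}}$ on the right-hand side.

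The first embedding is \cite[Lemma 38.1]{Tartar}; it follows from the elementary one-dimensional identity $v(x+h)-v(x)=Dv((x,x+h])$ for the precise representative, Fubini, and the one-dimensional inclusion $\BV\hookrightarrow \LL^\infty$. The second embedding is classical in the theory of Besov spaces: since $p<2$ forces $1-\frac{1}{p}<\frac{1}{2}$, one has $\mathrm{B}^{\frac{1}{2},2}_\infty\hookrightarrow \mathrm{B}^{1-\frac{1}{p},p}_{p,p}=\WW^{1-\frac{1}{p},p}$, which I would invoke in the form stated in \cite[Theorem 4.6.1(a)]{Triebel} or reconstruct by combining \cite[Lemma 22.2, (34.4), Lemma 36.1]{Tartar}.

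The only step requiring any explicit care is the scaling back to $\partial B$ of radius $R$. The change of variables $\tilde v(\omega)=v(x_0+R\omega)$ is an isometry for total variation, while on the left-hand side of the stated inequality each factor of $\dd\mathcal{H}^1$ contributes a power of $R$ and $|x-y|^p$ contributes $R^{-p}$; combining with the normalising factor $(2\pi R)^{-1}$ built into each averaged integral and then extracting the $p$-th root yields exactly the asserted power $R^{\frac{1}{p}}$. I do not anticipate a serious obstacle, as both ingredients of the chain embedding are standard and the scaling is routine. The only genuinely subtle conceptual point is that on a one-dimensional domain the correct target of the $\BV$ embedding is the Nikolski\u{\i} space $\mathrm{B}^{\frac{1}{2},2}_\infty$ rather than $\WW^{\frac{1}{2},2}$, precisely as witnessed by the circular-arc indicator example mentioned just before the statement; this is in turn what forces the restriction $p<2$ in the final inequality.
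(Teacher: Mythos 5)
Your proposal follows exactly the route the paper itself indicates: the chain $\BV \hookrightarrow \mathrm{B}^{\frac{1}{2},2}_{\infty} \hookrightarrow \WW^{1-\frac{1}{p},p}$ via \cite[Lemma 38.1]{Tartar} and the cited Sobolev--Besov embeddings, transferred to $\partial B$ by local coordinates and a partition of unity, with the factor $R^{\frac{1}{p}}$ recovered by scaling. This matches the paper's (sketched) argument, and your scaling computation is correct.
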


When $u \in \CC^{0}( \overline{\Omega},\RN )$ we denote by $\mathrm{Tr}_{\Omega}(u)=u|_{\partial \Omega}$ the
primitive trace operator of $u$ on $\partial \Omega$, and when we write $\mathrm{Tr}_{\Omega}(v)$ for more general Sobolev
mappings $v$ below we understand as usual this trace as the extension by continuity of the primitive trace operator to the relevant  
space.
We refer to \cite{Tartar,Triebel} for further background on Besov and Sobolev-Slobodecki\u{\i} spaces. However, for later reference we explicitly recall two 
instances of Gagliardo's trace theorem here.

\begin{lemma}\label{bvtrace}
For bounded Lipschitz domains $\Omega$ in $\Rn$ the trace operator $u \mapsto u|_{\partial \Omega}$ extends from smooth maps on $\overline{\Omega}$
by strict continuity to a well-defined strictly continuous linear surjective operator 
$$
\mathrm{Tr}_{\Omega} \colon \BV (\Omega , \RN ) \to \LL^{1}(\partial \Omega , \RN ).
$$
Furthermore, we already have $\mathrm{Tr}_{\Omega} \bigl( \WW^{1,1}(\Omega , \RN ) \bigr) = \LL^{1}(\partial \Omega , \RN )$.
In particular we have for a ball $B=B_{R}(x_{0})$, writing $\bar{u}=\mathrm{Tr}_{B}(u)$ for $u \in \BV (B, \RN )$ that
\begin{equation}\label{sharptrace}
\int_{\partial B} \! \left| \bar{u}-\dashint_{\partial B} \bar{u} \, \dd \mathcal{H}^{n-1} \right| \, \dd \mathcal{H}^{n-1} \leq c\int_{B} \! |Du|,
\end{equation}
where $c=c(n,N)$ is a constant.
\end{lemma}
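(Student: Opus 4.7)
The plan is to assemble the statement from classical facts, primarily from \cite{AFP}. The existence and boundedness of $\mathrm{Tr}_\Omega$ on $\BV(\Omega, \RN)$ is essentially AFP Theorem 3.87: for $\mathcal{H}^{n-1}$-a.e.\ $x \in \partial\Omega$ the approximate limit $\mathrm{Tr}_\Omega(u)(x) := \mathrm{ap}\lim_{y \to x,\, y \in \Omega} u(y)$ exists and yields a bounded linear operator $\BV(\Omega, \RN) \to \LL^1(\partial\Omega, \RN)$; on $\CC^0(\overline\Omega) \cap \BV$ it coincides with the primitive trace by continuity of $u$ at boundary points.

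For strict continuity the cleanest route is the extension-by-zero identity
$$|D\tilde u|(\Rn) = |Du|(\Omega) + \int_{\partial\Omega} |\mathrm{Tr}_\Omega u|\, \dif \mathcal{H}^{n-1},$$
where $\tilde u = u$ in $\Omega$ and $\tilde u = 0$ on $\Rn \setminus \overline\Omega$. If $u_j \to u$ strictly in $\BV(\Omega)$, then $\tilde u_j \wstar \tilde u$ in $\BV(\Rn)$; combined with weak* convergence of the boundary trace measures (obtained by testing against smooth vector fields on $\Rn$ via integration by parts) and $|D\tilde u_j|(\Rn) \to |D\tilde u|(\Rn)$, one obtains $\mathrm{Tr}_\Omega u_j \to \mathrm{Tr}_\Omega u$ in $\LL^1(\partial\Omega)$ by a Reshetnyak-type continuity argument applied to the singular part supported on $\partial\Omega$. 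In particular this upgrades the density result of Lemma \ref{Eapprox} to convergence of traces, establishing strict continuity.

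For surjectivity I reduce by a Lipschitz partition of unity and local flattening of $\partial\Omega$ to the halfspace case. For $\phi \in \LL^1(\R^{n-1}, \RN)$ and a fixed mollifier $\rho \in \CC_c^\infty(\R^{n-1})$, set $u(x', x_n) := \int_{\R^{n-1}} \rho(y')\phi(x' - x_n y')\, \dif y'$ on $\R^n_+$. Then $u(\cdot, 0) = \phi$ in $\LL^1$, and differentiating under the integral (integrating by parts in $y'$ for the tangential derivatives, picking up the scaling factor $x_n$ for the normal derivative) yields $\|\nabla u\|_{\LL^1(\R^n_+ \cap \{x_n < 1\})} \leq C\|\phi\|_{\LL^1(\R^{n-1})}$; multiplying by a cut-off in $x_n$ supplies the required $\WW^{1,1}$-lift in the model case, and hence globally after patching.

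For (\ref{sharptrace}) on $B = B_R(x_0)$, set $c := (u)_B$. The scaling-invariant form of the trace inequality applied to $u - c$ gives
$$\int_{\partial B} |\bar u - c|\, \dif \mathcal{H}^{n-1} \leq C\bigl(R^{-1}\|u - c\|_{\LL^1(B)} + |Du|(B)\bigr),$$
and the $\BV$-Poincaré inequality $\|u - c\|_{\LL^1(B)} \leq CR|Du|(B)$ collapses this to $C|Du|(B)$. Since $\bigl|\dashint_{\partial B} \bar u\, \dif\mathcal{H}^{n-1} - c\bigr| \leq \dashint_{\partial B}|\bar u - c|\, \dif\mathcal{H}^{n-1}$, the triangle inequality yields (\ref{sharptrace}) with at most a doubled constant. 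The main obstacle is the strict continuity claim: one must transfer measure-theoretic strict convergence in $\Omega$ into strong $\LL^1$-convergence on the lower-dimensional set $\partial\Omega$, and the extension-by-zero identity is precisely the bridge that turns the boundary trace into a singular part of a total variation on $\Rn$ where Reshetnyak-type continuity applies.
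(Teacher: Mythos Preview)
The paper does not supply a proof of this lemma; it is recorded as a background fact with an implicit appeal to \cite{AFP}. So there is no ``paper's own proof'' to compare against, and your task reduces to giving a correct self-contained argument. The surjectivity construction and the derivation of (\ref{sharptrace}) via the scale-invariant trace bound plus $\BV$-Poincar\'{e} are both standard and fine.

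The strict continuity argument, however, contains a circularity. You invoke the identity $|D\tilde u|(\Rn) = |Du|(\Omega) + \int_{\partial\Omega} |\trace_{\Omega} u|\,\dif\mathcal{H}^{n-1}$ and then assert that $|D\tilde u_{j}|(\Rn) \to |D\tilde u|(\Rn)$. But given that $|Du_{j}|(\Omega) \to |Du|(\Omega)$ by hypothesis, this assertion is \emph{equivalent} to $\| \trace_{\Omega} u_{j} \|_{\LL^{1}(\partial\Omega)} \to \| \trace_{\Omega} u \|_{\LL^{1}(\partial\Omega)}$, which is essentially what you are trying to prove. Strict convergence on $\Omega$ gives you no direct control on the boundary contribution to $|D\tilde u_{j}|(\Rn)$; weak$\mbox{}^{\ast}$ convergence of $D\tilde u_{j}$ on $\Rn$ only yields lower semicontinuity of the total variation, not the matching upper bound. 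Likewise, the weak$\mbox{}^{\ast}$ convergence of $D\tilde u_{j}$ on $\Rn$ does not by itself imply weak$\mbox{}^{\ast}$ convergence of the restrictions $D\tilde u_{j}\lfloor \partial\Omega$, since restriction to a closed $\Leb$-null set is not continuous for weak$\mbox{}^{\ast}$ convergence of measures.

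A route that does work is the one in \cite[Theorem~3.88]{AFP}: use the local Lipschitz-graph structure of $\partial\Omega$ and the pointwise trace characterisation (Theorem~3.87 there) to show that the traces $\trace_{\Omega} u_{j}$ are uniformly approximated in $\LL^{1}(\partial\Omega)$ by averages of $u_{j}$ over thin interior strips. Strict convergence on $\Omega$ then controls these strip averages uniformly in $j$ (the key point being that $|Du_{j}|$ does not concentrate near $\partial\Omega$, which follows from $|Du_{j}|(\Omega) \to |Du|(\Omega)$ combined with $|Du_{j}| \wstar |Du|$ on open subsets), and this yields $\trace_{\Omega} u_{j} \to \trace_{\Omega} u$ in $\LL^{1}(\partial\Omega)$ directly.
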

%The sharp constant $c=c(n,1)$ in (\ref{sharptrace}) is known, see \cite{Cianchi}.

\begin{lemma}\label{gagtrace}
For the unit ball $\BB=B_{1}(0)$ there exists a bounded linear extension operator
$$
\mathrm{E} \colon \WW^{k-\frac{1}{p},p}(\partial \BB , \RN ) \to \WW^{k,p}( \BB, \RN ) , \quad k \in \N , \quad 1<p< \infty .
$$
More precisely, $\mathrm{E}$ does not depend on $k$, $p$ and is a bounded linear operator such that 
$\mathrm{Tr}_{\BB} \circ \mathrm{E}$ is the identity on $\WW^{k-\frac{1}{p},p}(\partial \BB , \RN )$.
\end{lemma}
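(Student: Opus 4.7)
The plan is to construct $\mathrm{E}$ as a single Gagliardo-type averaging operator that handles all pairs $(k,p)$ simultaneously. First I would cover $\partial \BB$ by a finite atlas $\{(U_i, \phi_i)\}$ whose charts flatten $\partial \BB$ locally to pieces of the hyperplane $\{x_n = 0\}$ and send $U_i \cap \BB$ into the half-space $\R^n_+ = \R^{n-1} \times (0,\infty)$; combined with a subordinate partition of unity $(\chi_i)$ and a fixed smooth cutoff $\psi$ supported in a collar of $\partial \BB$, the problem reduces to producing one universal operator $\mathrm{E}_0 \colon \WW^{k-1/p,p}(\R^{n-1},\RN) \to \WW^{k,p}(\R^n_+,\RN)$ that does not depend on $k$ or $p$. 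The final extension is then $\mathrm{E} v := \psi \sum_i \bigl( \mathrm{E}_0[(\chi_i v)\circ \phi_i^{-1}] \bigr) \circ \phi_i$, extended by zero off the collar.

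On the half-space I would use the classical formula
$$
(\mathrm{E}_0 w)(x',t) := \int_{\R^{n-1}} w(x' - t y')\, \eta(y') \, \dd y' = (w \ast \eta_t)(x'),
$$
where $\eta \in \CC^\infty_c(\R^{n-1})$ has $\int \eta = 1$ and $\eta_t(y') := t^{-(n-1)} \eta(y'/t)$. This formula manifestly does not involve $k$ or $p$, and the identity $\mathrm{Tr} \circ \mathrm{E}_0 = \mathrm{id}$ follows from the standard mollifier convergence $\mathrm{E}_0 w(\cdot, t) = w \ast \eta_t \to w$ in $\LL^p$ as $t \searrow 0$.

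The continuity estimate $\|\mathrm{E}_0 w\|_{\WW^{k,p}(\R^n_+)} \leq C \|w\|_{\WW^{k-1/p,p}(\R^{n-1})}$ is the core of the proof. For $k = 1$, the key point is that $\int \nabla\eta = 0$, which lets us rewrite
$$
\nabla_{x'}\mathrm{E}_0 w(x',t) = \frac{1}{t}\int_{\R^{n-1}} \bigl[w(x'-ty') - w(x')\bigr]\, \nabla\eta(y')\, \dd y ,
$$
and a parallel computation using integration by parts in $y'$ (exploiting $\int [n\eta + y' \cdot \nabla \eta]\, \dd y' = 0$) produces an analogous difference-quotient representation of $\partial_t \mathrm{E}_0 w$. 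H\"{o}lder's inequality then gives $|\nabla \mathrm{E}_0 w(x',t)|^p \leq C\,t^{-(n-1+p)} \int_{|z| \leq Ct} |w(x'-z) - w(x')|^p\, \dd z$, and Fubini combined with the elementary identity $\int_{|z|/C}^\infty t^{-(n-1+p)} \, \dd t = c\, |z|^{-(n+p-2)}$ matches this exactly against the $\WW^{1-1/p,p}(\R^{n-1})$ Slobodecki\u{\i} seminorm with weight $|x'-y'|^{-(n-1)-(1-1/p)p} = |x'-y'|^{-(n+p-2)}$. For general $k$ the same mechanism iterates: each further differentiation of $\mathrm{E}_0 w$ introduces a factor $1/t$ that is absorbed by promoting $w$ to a higher-order difference quotient, and the resulting integral is identified with the $\WW^{k-1/p,p}$ seminorm.

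The main obstacle is precisely the universality requirement in $(k,p)$: a harmonic or Poisson-based extension would give the trace property but its $\WW^{k,p}$ bounds rely on $p$-dependent Calder\'{o}n--Zygmund theory, so the averaging construction is essential. Once $\mathrm{E}_0$ is in place, the patching through $\chi_i$, $\phi_i$, and $\psi$ consists only of compositions with fixed smooth diffeomorphisms and multiplications by fixed smooth functions, both of which preserve $\WW^{k,p}$ boundedness for all $k \in \N$ and $p \in (1,\infty)$. A complete development along these lines can be found in \cite{Triebel}.
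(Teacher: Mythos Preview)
The paper does not actually prove this lemma; it is recorded as a standard instance of Gagliardo's trace theorem, with the texts \cite{Tartar,Triebel} cited just beforehand for background. Your sketch is precisely the classical Gagliardo averaging construction found in those references, so there is nothing to compare against beyond noting that you have supplied what the paper merely quotes.

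One small slip: in the identity you invoke for $\partial_t \mathrm{E}_0 w$, the correct cancellation in $\R^{n-1}$ reads $\int_{\R^{n-1}}\bigl[(n-1)\eta + y'\cdot\nabla\eta\bigr]\,\dd y' = 0$ (from $\mathrm{div}_{y'}(y'\eta) = (n-1)\eta + y'\cdot\nabla\eta$), not $n\eta$. This does not affect the argument. The higher-$k$ step is a bit telegraphic; the clean way to phrase it is that tangential derivatives $\partial_{x'}^{\alpha}$ with $|\alpha|\leq k-1$ commute with $\mathrm{E}_0$, reducing matters to the $k=1$ estimate applied to $\partial_{x'}^{\alpha}w \in \WW^{1-1/p,p}(\R^{n-1})$, while each $\partial_t$ can be traded for a tangential derivative via the identity $\partial_t(w\ast\eta_t) = -\sum_j \partial_{x_j'}\bigl(w\ast (y_j\eta)_t\bigr)$.
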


\subsection{Auxiliary estimates for the reference integrand $E$}

We write $E(z)$ for the reference integrand defined at (\ref{defE}) whenever $z \in \mathbb{H}$
and $\mathbb{H}$ is a finite dimensional Hilbert space. Firstly, elementary estimations yield
\begin{equation}\label{minb}
\left\{
\begin{array}{l}
(\sqrt{2}-1)\min \{ |z|,|z|^{2} \} \leq E(z) \leq \min \{ |z|,|z|^{2} \}\\
\mbox{ }\\
E(az) \leq a^{2}E(z) \quad \mbox{ and } \quad E(z+w) \leq 2\bigl( E(z)+E(w) \bigr)
\end{array}
\right.
\end{equation}
for all $z$, $w \in \mathbb{H}$ and $a\geq 1$. 
For the following, define for a measurable subset $A$ of $\R^{n}$ and a $\mathbb{H}$--valued Radon 
measure $\mu$ on $\R^{n}$ the mean value $\mu_{B}:=\mu(B)/\Leb (B)$. 
\begin{lemma}\label{estE1}
Let $\phi$ be a bounded $\mathbb{H}$--valued Radon measure on an open ball $B$ in $\Rn$. Then
\begin{equation}\label{qminE}
\int_{B} \! E( \phi -\phi_{B}) \leq 4\int_{B} \! E( \phi -z)
\end{equation}
for all $z \in \mathbb{H}$.
\end{lemma}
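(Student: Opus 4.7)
The plan is to reduce to the mean-zero case and then exploit the fact that the area integrand $E$ is both convex and $1$-Lipschitz. Concretely, set $\mu := \phi - z\mathscr{L}^n$; since $\mu_{B} = \phi_{B} - z$ and $\phi - \phi_{B}\mathscr{L}^n = \mu - \mu_{B}\mathscr{L}^n$, the claim is equivalent to
$$
\int_{B} E(\mu - \mu_{B}\mathscr{L}^n) \leq 4 \int_{B} E(\mu)
$$
for every bounded $\mathbb{H}$-valued Radon measure $\mu$ on $B$. I would do the whole argument at this level, as $z$ then disappears entirely.

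Decompose $\mu = g\mathscr{L}^n + \sigma$ into its Lebesgue--Radon--Nikod\'ym parts. By the definition of $\int_{B} E(\cdot)$ recalled in Subsection 2.1, and because $E^{\infty} = |\cdot|$,
$$
\int_{B} E(\mu) = \int_{B} E(g)\,\dd x + |\sigma|(B), \qquad \int_{B} E(\mu - \mu_{B}\mathscr{L}^n) = \int_{B} E(g - \mu_{B})\,\dd x + |\sigma|(B).
$$
Applying the pointwise inequality $E(a-b)\leq 2E(a)+2E(b)$ from (\ref{minb}) to the first integrand on the right,
$$
\int_{B} E(\mu - \mu_{B}\mathscr{L}^n) \leq 2\int_{B} E(g)\,\dd x + 2E(\mu_{B})\mathscr{L}^n(B) + |\sigma|(B).
$$

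The remaining task is therefore a \emph{Jensen-type inequality for measures},
$$
E(\mu_{B})\,\mathscr{L}^n(B) \leq \int_{B} E(\mu),
$$
which I expect to be the only non-formal step. Writing $g_{B} = \mathscr{L}^n(B)^{-1}\int_{B} g\,\dd x$, so that $\mu_{B} = g_{B} + \sigma(B)/\mathscr{L}^n(B)$, I would combine the fact that $E$ is $1$-Lipschitz (immediate from $|\nabla E(z)| = |z|/\sqrt{1+|z|^{2}}\leq 1$) with ordinary Jensen for the convex function $E$ on the absolutely continuous part:
$$
E(\mu_{B}) \leq E(g_{B}) + \frac{|\sigma(B)|}{\mathscr{L}^n(B)} \leq \frac{1}{\mathscr{L}^n(B)}\int_{B} E(g)\,\dd x + \frac{|\sigma|(B)}{\mathscr{L}^n(B)}.
$$
Using $E(z)\leq |z|$ from (\ref{minb}) in place of the Lipschitz estimate would cost a factor of $2$ and produce only the weaker constant $6$; the Lipschitz bound is what gives the sharp $4$.

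Plugging this back yields
$$
\int_{B} E(\mu - \mu_{B}\mathscr{L}^n) \leq 2\int_{B} E(g)\,\dd x + 2\int_{B} E(g)\,\dd x + 2|\sigma|(B) + |\sigma|(B) = 4\int_{B} E(g)\,\dd x + 3|\sigma|(B) \leq 4\int_{B} E(\mu),
$$
which is the claim. The steps are all routine manipulations with measures; the only substantive point is recognising that $E$ being simultaneously convex and $1$-Lipschitz (a feature special to the area integrand) is what allows the singular part of $\mu_{B}$ to be absorbed without loss.
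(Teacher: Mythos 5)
Your proof is correct. The core of the argument is the same as the paper's: write $\phi-\phi_{B}=(\phi-z)-(\phi_{B}-z)$, apply the quasi-triangle inequality $E(a+b)\leq 2\bigl(E(a)+E(b)\bigr)$ from (\ref{minb}), and absorb the mean-value term via Jensen's inequality. Where you differ is in the measure-theoretic reduction: the paper disposes of the singular part in one line by mollifying $\phi$ and assuming $\phi\in\LL^{1}(B,\mathbb{H})$ (implicitly relying on area-strict convergence of mollifications and the attendant continuity of $\int_{B}E(\cdot)$), whereas you work directly with the Lebesgue--Radon--Nikod\'{y}m decomposition $\mu=g\Leb+\sigma$ and the identity $\int_{B}E(\mu)=\int_{B}E(g)\,\dd x+|\sigma|(B)$. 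The price of avoiding mollification is that Jensen no longer applies directly to $\mu_{B}$, and your observation that the $1$-Lipschitz bound $|\nabla E|\leq 1$ lets the singular contribution $\sigma(B)/\Leb(B)$ be split off \emph{without} a factor of $2$ is exactly what recovers the constant $4$ (as you note, the crude bound $E(z)\leq|z|$ would only give $6$). Your route is slightly longer but entirely self-contained at the level of measures; the paper's is shorter but leans on the approximation machinery of Subsection 2.1. Both are valid, and your ending estimate $4\int_{B}E(g)\,\dd x+3|\sigma|(B)\leq 4\int_{B}E(\mu)$ even shows the singular part enters with a better constant.
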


\begin{proof}
By mollification we can asssume that $\phi \in \LL^{1}(B, \mathbb{H})$. From (\ref{minb}) and convexity we find
\begin{eqnarray*}
\int_{B} \! E( \phi -\phi_{B}) \, \dd x 
&\leq& 2\int_{B} \! E(\phi -z) \, \dd x +2\Leb (B)E(\phi_{B}-z)\\
&\leq& 4\int_{B} \! E(\phi -z) \, \dd x
\end{eqnarray*}
as required. 
\end{proof}

\begin{lemma}\label{estE2}
Let $\phi$ be a bounded $\mathbb{H}$--valued Radon measure on an open ball $B$ in $\Rn$.
Then
$$
\dashint_{B} \! | \phi | \leq \sqrt{{\mathscr{E}^{2}+2\mathscr{E}}}, \quad \mbox{ where } \, \mathscr{E} = \dashint_{B} \! E(\phi ) .
$$
In particular, for $\mathscr{E} \leq 1$ we have
\begin{equation}\label{smallE}
\dashint_{B} \! | \phi | \, \dd x \leq \sqrt{3\mathscr{E}}.
\end{equation}
\end{lemma}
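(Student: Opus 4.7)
The plan is to reduce to the case $\phi \in \LL^{1}(B,\mathbb{H})$ by mollification, exactly as in the proof of Lemma \ref{estE1}, and then to exploit the elementary algebraic identity
$$
|z|^{2} = E(z)^{2} + 2E(z), \qquad z \in \mathbb{H},
$$
which is immediate from $E(z)+1=\sqrt{1+|z|^{2}}$. Setting $G(s):=\sqrt{s^{2}+2s}$ for $s\geq 0$, this identity says that $|\phi(x)|=G(E(\phi(x)))$ pointwise $\Leb$-a.e.\ on $B$.

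The key observation is that $G$ is concave on $[0,\infty)$. One sees this either by noting that $G(s)=\sqrt{(s+1)^{2}-1}$ is the functional inverse on $[0,\infty)$ of the strictly convex function $t\mapsto \sqrt{1+t^{2}}-1$, or by computing directly that $G''(s)=-(s^{2}+2s)^{-3/2}<0$. With concavity in hand, a single application of Jensen's inequality produces
$$
\dashint_{B} |\phi|\,\dd x \;=\; \dashint_{B} G\bigl(E(\phi)\bigr)\,\dd x \;\leq\; G\biggl( \dashint_{B} E(\phi)\,\dd x \biggr) \;=\; G(\mathscr{E}) \;=\; \sqrt{\mathscr{E}^{2}+2\mathscr{E}},
$$
which is the first claimed inequality. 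The special case (\ref{smallE}) then follows trivially, since $\mathscr{E}\leq 1$ forces $\mathscr{E}^{2}\leq \mathscr{E}$ and hence $\mathscr{E}^{2}+2\mathscr{E}\leq 3\mathscr{E}$.

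The only step needing a brief comment is the mollification reduction: for a general bounded Radon measure $\phi$, a standard mollification produces smooth $\phi_{\varepsilon}$ converging to $\phi$ area-strictly on $B$. Since $E$ is continuous with regular recession function $E^{\infty}=|\cdot|$, Reshetnyak's continuity theorem (recalled in Section 2.1) gives $\int_{B}E(\phi_{\varepsilon})\,\dd x\to\int_{B}E(\phi)$ as well as $\int_{B}|\phi_{\varepsilon}|\,\dd x\to |\phi|(B)$, so the inequality proved for the smooth case passes to the limit. This is entirely routine; the substantive content of the lemma is the concavity of $G$ combined with Jensen, and I do not expect any genuine obstacle.
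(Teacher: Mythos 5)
Your proof is correct and takes essentially the same route as the paper's: the paper applies Jensen's inequality to the convex radial profile $t\mapsto\sqrt{1+t^{2}}-1$ to obtain $E\bigl(\dashint_{B}|\phi|\bigr)\leq\mathscr{E}$ and then solves for $\dashint_{B}|\phi|$, which is precisely your application of Jensen to the concave inverse $G(s)=\sqrt{s^{2}+2s}$ read in the opposite direction. The mollification reduction is handled identically in both arguments, so there is nothing to add.
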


\begin{proof}
By mollification we can asssume that $\phi \in \LL^{1}(B, \mathbb{H})$. From Jensen's inequality
$$
E\left( \dashint_{B} \! | \phi | \, \dd x \right) \leq \dashint_{B} \! E( \phi ) \, \dd x = \mathscr{E},
$$
and hence solving for the $\LL^1$ norm we easily conclude.
\end{proof}

\subsection{Estimates for Legendre-Hadamard elliptic systems}
The space of symmetric and real bilinear forms on $\M$ is denoted by $\bigodot^{2}( \M )$ and equipped with the operator norm, denoted
and defined for $\mathbb{A} \in \bigodot^{2}( \M )$ as $| \mathbb{A}| = \sup \{ \mathbb{A}[z,w] : \, |z|, |w| \leq 1 \}$. Observe that
the precise meaning of $| \cdot |$ can be understood from the context, and for a matrix $z \in \M$ we use it to denote the usual euclidean
norm: $|z|^{2} = \mbox{trace} (z^{t}z)$. Likewise for vectors in $\mathbb{R}^k$.
Fix $\mathbb{A} \in \bigodot^{2}( \M )$ and assume it satisfies the strong Legendre-Hadamard condition
\begin{equation}\label{sLH}
\left\{
\begin{array}{l}
\alpha | y |^{2}| x |^{2} \leq \mathbb{A} [ y \otimes x , y \otimes x ] 
\quad \forall y \in \RN , \, \forall x \in \Rn\\
| \mathbb{A} | \leq \beta ,
\end{array}
\right.
\end{equation}
where $\alpha$, $\beta > 0$ are constants. Any $\RN$-valued distribution $u$ on
$\Omega$ satisfying
$$
-\mathrm{div} \, \mathbb{A}Du = 0 \quad \mbox{ in the distributional sense on } \Omega
$$
where $\mathrm{div}$ is understood to act row-wise, is called $\mathbb{A}$-harmonic, or simply harmonic when $\mathbb{A}$ is
clear from the context. 

The next lemma is a standard Weyl-type result and can for instance be proved using the difference-quotient method (see 
\cite{Giaquinta,Giusti,Mit,MorreyB}).

\begin{lemma}\label{Weyl}
Let $\mathbb{A}\in \bigodot^{2}( \M )$ satisfy (\ref{sLH}). Then there exists a constant $c=c(\frac{\beta}{\alpha},n,N)$ with
the following propeties. Let $B=B_{R}(x_{0})$ be a ball in $\Rn$ and assume that $h \in \WW^{1,1}(B, \RN )$ is harmonic in $B$: 
$-\mathrm{div} \mathbb{A}\nabla h = 0$ in $B$. Then $h$ is $\CC^{\infty}$ on $B$ and for any $z \in \M$ we have
$$
\sup_{B_{\frac{R}{2}}} | \nabla h-z| + R\sup_{B_{\frac{R}{2}}} | \nabla^{2} h| \leq c\dashint_{B_{R}} \! |\nabla h -z| \, \dd x.
$$
\end{lemma}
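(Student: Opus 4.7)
The plan is to reduce to the case of smooth $\mathbb{A}$-harmonic maps and then run the standard Caccioppoli-plus-Moser bootstrap. First I would mollify: since $\mathbb{A}$ has constant coefficients, $h_\varepsilon := \rho_\varepsilon \ast h$ is smooth and remains $\mathbb{A}$-harmonic on $B_{R-\varepsilon}(x_0)$. If the estimate is established for smooth $\mathbb{A}$-harmonic maps, then letting $\varepsilon \searrow 0$ (using $h_\varepsilon \to h$ in $\WW^{1,1}_{\locc}$) transfers it to $h$, and $\CC^{\infty}$-smoothness of $h$ is obtained as a by-product by applying the estimate to the $h_\varepsilon$ at smaller scales and passing to the limit.

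So assume $h \in \CC^{\infty}$. Because $\mathbb{A}$ has constant coefficients, every partial derivative of $h$ is again $\mathbb{A}$-harmonic, so in particular $v := \nabla h - z$ is $\mathbb{A}$-harmonic component-by-component for any fixed $z \in \M$. The main tool is the Caccioppoli inequality of the first kind: for $\mathbb{A}$-harmonic $w$, any $a \in \RN$, and $0 < \rho < R$,
\begin{equation*}
\int_{B_\rho} | \nabla w |^{2} \, \dd x \leq \frac{c}{(R-\rho)^{2}} \int_{B_R} | w-a |^{2} \, \dd x, \qquad c = c(\beta/\alpha).
\end{equation*}
This is obtained by testing the system against $\eta^{2}(w-a)$ for a suitable cut-off $\eta$, where the Legendre--Hadamard condition (\ref{sLH}) is upgraded, via Plancherel, to a G\aa{}rding inequality on $\CC_{c}^{\infty}(B_R,\RN)$, which provides the coercivity needed to absorb the $\nabla\eta$-terms.

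Iterating the Caccioppoli inequality across a chain of shrinking balls --- which is legitimate because all derivatives of an $\mathbb{A}$-harmonic map are themselves $\mathbb{A}$-harmonic --- yields interior $\WW^{k,2}$-bounds of every order, and Sobolev embedding converts this into
\begin{equation*}
\| w \|_{\LL^{\infty}(B_{R/2})} + R \| \nabla w \|_{\LL^{\infty}(B_{R/2})} \leq c R^{-n/2} \| w-a \|_{\LL^{2}(B_R)}
\end{equation*}
for every $\mathbb{A}$-harmonic $w$ and every $a \in \RN$. To promote the $\LL^{2}$-norm on the right to an $\LL^{1}$-norm I would use the standard interpolation-and-iteration trick: for $R/2 \leq \rho < \rho' \leq R$, applying the last display on sub-balls of radius $(\rho'-\rho)/2$ covering $B_\rho$ and combining it with $\| w-a \|_{\LL^{2}}^{2} \leq \| w-a \|_{\LL^{\infty}} \| w-a \|_{\LL^{1}}$ yields
\begin{equation*}
\sup_{B_\rho} | w-a | \leq \tfrac{1}{2} \sup_{B_{\rho'}} | w-a | + \frac{c}{(\rho'-\rho)^{n}} \int_{B_R} | w-a | \, \dd x ,
\end{equation*}
and the familiar geometric-series absorption lemma then produces $\| w \|_{\LL^{\infty}(B_{R/2})} + R \| \nabla w \|_{\LL^{\infty}(B_{R/2})} \leq c \dashint_{B_R} | w-a | \, \dd x$. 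Specializing $w = v = \nabla h - z$ with $a = 0$, and noting $\nabla v = \nabla^{2} h$, gives the asserted inequality. The only non-routine step is the Caccioppoli inequality via G\aa{}rding's inequality --- this is the sole place where the Legendre--Hadamard hypothesis is used in an essential way --- and so I expect no real obstacle beyond making that coercivity estimate precise.
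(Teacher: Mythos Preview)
Your proposal is correct and follows the standard textbook route; the paper itself gives no proof of this lemma, merely citing \cite{Giaquinta,Giusti,Mit,MorreyB} and the difference-quotient method, which for constant-coefficient systems is equivalent to your mollify-then-differentiate approach. Your identification of the one genuinely non-routine point---that Caccioppoli for Legendre--Hadamard (as opposed to strongly elliptic) systems must go through G\aa{}rding's inequality via Plancherel rather than pointwise coercivity---is exactly right, and the $\LL^{2}\to\LL^{1}$ upgrade by the interpolation/absorption iteration is standard.
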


%It is well-known (see for instance \cite{Maz}) that in connection with traces on
%suitably smooth domains $\Omega$ the fractional Sobolev spaces $\WW^{s,p}(\partial B ,\RN )$ appear as the natural target
%spaces.

Finally we state two basic existence and regularity results for inhomogeneous Legendre-Hadamard elliptic systems that are 
instrumental for our arguments below. 
\begin{proposition}\label{exH}
Let $\mathbb{A}\in \bigodot^{2}( \M )$ satisfy (\ref{sLH}) and fix exponents $p \in (1,\infty )$ and $q \in [2, \infty )$. 
Denote $\BB = B_{1}(0)$, the open unit ball in $\Rn$.

(a) For each $g \in \WW^{1-\tfrac{1}{p},p}(\partial \BB ,\RN )$ there exists a unique solution $h \in \WW^{1,p}( \BB , \RN )$ to the
elliptic system
\begin{equation}\label{sys1}
\left\{
\begin{array}{ll}
-\mathrm{div } \mathbb{A}\nabla h = 0 & \mbox{ in } \BB\\
h|_{\partial \BB} = g & \mbox{ on } \partial \BB ,
\end{array}
\right.
\end{equation}
and 
$$
\| h \|_{\WW^{1,p}} \leq c \| g \|_{\WW^{1-\frac{1}{p},p}}
$$ 
where $c=c(n,N,p,\tfrac{\alpha}{\beta})$.

(b) For each $f \in \LL^{q}( \BB , \RN )$ there exists a unique solution $w \in \WW^{2,q}( \BB , \RN )$ to the
elliptic system
\begin{equation}\label{sys2}
\left\{
\begin{array}{ll}
-\mathrm{div } \mathbb{A} \nabla w = f & \mbox{ in } \BB\\
w|_{\partial \BB} = 0 & \mbox{ on } \partial \BB ,
\end{array}
\right.
\end{equation}
and 
$$
\| w \|_{\WW^{2,q}} \leq c\| f \|_{\LL^q},
$$ 
where $c=c(n,N,q,\tfrac{\alpha}{\beta})$.
\end{proposition}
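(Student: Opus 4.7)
The strategy is to reduce everything to the $\WW^{1,2}$ Hilbert space case via Lax--Milgram, and then to upgrade the solvability and the a priori bounds to the full scale $1<p<\infty$, $2 \leq q < \infty$ by invoking the classical $\LL^{p}$-theory for constant coefficient Legendre--Hadamard elliptic systems on the smooth domain $\BB$.

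First I would set up the bilinear form
$$
B(u,v) := \int_{\BB} \! \A [\nabla u, \nabla v ] \, \dd x
$$
on $\WW^{1,2}_{0}( \BB , \RN )$. Extending $u \in \WW^{1,2}_{0}$ by zero to $\Rn$ and applying Plancherel together with (\ref{sLH}) gives the G{\aa}rding-type bound $B(u,u) \geq \alpha \| \nabla u \|_{\LL^{2}}^{2}$, so Poincar\'e makes $B$ coercive on $\WW^{1,2}_{0}$. For part (b) this yields directly, through Lax--Milgram, a unique $w \in \WW^{1,2}_{0}(\BB , \RN )$ with $B(w, \varphi ) = \int_{\BB} \! f \cdot \varphi \, \dd x$ for every test map $\varphi$, together with the bound $\| w \|_{\WW^{1,2}} \leq c \| f \|_{\LL^{2}}$. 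For part (a) I would first use Lemma~\ref{gagtrace} to extend the datum $g$ to some $G \in \WW^{1,p}( \BB , \RN )$ with $\mathrm{Tr}_{\BB}(G) = g$ and $\| G \|_{\WW^{1,p}} \leq c \| g \|_{\WW^{1-1/p,p}}$, and then solve for $u = h-G \in \WW^{1,p}_{0}(\BB , \RN )$ the zero-boundary problem $-\di \A \nabla u = \di \A \nabla G$, finally setting $h := u+G$.

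The substantive step is to push the solvability out of the Hilbert range. The plan is to invoke the classical Calder\'on--Zygmund/Agmon--Douglis--Nirenberg $\LL^{p}$-theory for constant coefficient Legendre--Hadamard systems on the smooth domain $\BB$: the strong Legendre--Hadamard condition (\ref{sLH}) makes $-\di \A \nabla$ properly elliptic, the complementing condition is automatic for pure Dirichlet boundary data, and the fundamental matrix is a smooth kernel homogeneous of degree $2-n$ whose second derivatives give rise to Calder\'on--Zygmund operators, bounded on $\LL^{p}$ for $1<p<\infty$ by Mihlin's multiplier theorem. This yields the a priori estimates
$$
\| h \|_{\WW^{1,p}( \BB )} \leq c \| g \|_{\WW^{1-1/p,p}(\partial \BB )}, \qquad \| w \|_{\WW^{2,q}(\BB )} \leq c \| f \|_{\LL^{q}(\BB )}
$$
for smooth data, and existence in the full range with the claimed bounds then follows by density. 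Uniqueness in both (a) and (b) is obtained by subtracting two putative solutions and, after restriction to the Hilbert setting via standard embeddings, testing the difference against itself in $\WW^{1,2}_{0}$.

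I expect the main obstacle to be the up-to-the-boundary $\LL^{p}$ bound. Interior estimates are immediate from Mihlin on $\Rn$, but the estimate near $\partial \BB$ relies either on the full ADN boundary machinery (flattening, frozen coefficients, verification of the complementing condition) or on constructing a Poisson-type kernel for $-\di \A \nabla$ on $\BB$ and bounding the associated boundary singular integral. Once this is granted, the constants are readily seen to depend only on $n$, $N$, the exponents $p$ (resp.\ $q$) and the ellipticity ratio $\alpha/\beta$, as required.
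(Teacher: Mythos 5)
Your route --- Lax--Milgram on $\WW^{1,2}_{0}(\BB,\RN)$ (with coercivity via Plancherel and (\ref{sLH})) followed by the classical Agmon--Douglis--Nirenberg/Calder\'{o}n--Zygmund theory for constant-coefficient Legendre--Hadamard systems on the smooth domain $\BB$ --- is viable, but it is precisely the route the paper mentions (citing Morrey, Maz'ya--Shaposhnikova and Mitrea) and then deliberately avoids. The paper instead follows Giusti's Stampacchia--Campanato scheme: for $p\in[2,\infty)$, and for all of part (b), the $\LL^{p}$ bounds are obtained by interpolation from the elementary $\LL^{2}$ estimate; the subquadratic range $p\in(1,2)$ of part (a) is then reached by duality, solving an auxiliary system with datum $W_{j}=|\nabla\bar{h}_{j}|^{p-2}\nabla\bar{h}_{j}\in\LL^{p'}$, $p'>2$, and testing with its solution using the symmetry of $\A$. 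The reduction of (a) to a zero-boundary problem via Lemma~\ref{gagtrace} is common to both arguments. What the paper's route buys is self-containedness: it never needs the complementing condition, layer potentials, or a Poisson kernel --- exactly the items you flag as the ``main obstacle'' and then leave as a black box. What your route buys, once the ADN boundary estimates are granted, is the whole range of exponents in one stroke.

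There is one genuine gap: uniqueness for $p\in(1,2)$ in part (a). The difference of two putative solutions lies in $\WW^{1,p}_{0}(\BB,\RN)$ only, and for $p<2$ this does \emph{not} embed into $\WW^{1,2}_{0}$ (Sobolev embedding raises the integrability of the function, not of its gradient), so ``testing the difference against itself'' in the Hilbert setting is not justified. The paper closes this by a dilation argument: a $\WW^{1,p}_{0}$ solution $h$ of the homogeneous system is smooth in the interior by Lemma~\ref{Weyl}, the dilates $h_{r}(x)=h(rx)$ are $\WW^{1,2}$ solutions to which the already-established estimate applies, and letting $r\nearrow 1$ forces $h=0$ by continuity of the trace. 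You need this, or an equivalent duality argument, to complete the subquadratic case. (A minor point: the fundamental matrix is logarithmic rather than homogeneous of degree $2-n$ when $n=2$, so that part of your heuristic needs adjusting in two dimensions.)
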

While these results are well-known we have been unable to find a precise reference. They can 
be inferred from more general results stated in \cite{MorreyB}, see in particular Theorems 6.4.8 and 6.5.5 there, and also from 
\cite{MaSh}, Lemma 3.2 (taking the remark on page 106 into account). The last reference does not provide details 
for the general Legendre-Hadamard elliptic case, but the reader can find the nontrivial calculations and further background
in the book \cite{Mit}. 
All the above mentioned proofs rely on boundary layer methods, and the work on these is still ongoing
with many interesting open questions remaining, see for instance \cite{M3M}. However the proof of Proposition \ref{exH}
need not be so sophisticated. An easier route goes via the elegant approach exposed by \textsc{Giusti} in \cite[Chapter 10]{Giusti}. As stated
there it builds on earlier works by \textsc{Stampacchia} \cite{Stamp} and \textsc{Campanato} \cite{Camp}, and derives $\LL^p$ estimates from simple $\LL^2$
estimates and interpolation. For the convenience of the reader we provide a brief sketch along these lines.
\bigskip

\noindent
\textit{Sketch of Proof.}
(a): By virtue of Gagliardo's trace theorem, as stated in Lemma \ref{gagtrace}, we can find an extension 
$\bar{g} \in \WW^{1,p}( \BB , \RN )$ with $\bar{g}|_{\partial \BB}=g$ and 
\begin{equation}\label{gagsub}
\| \bar{g} \|_{\WW^{1,p}} \leq c\| g \|_{\WW^{1-\frac{1}{p},p}}
\end{equation} 
for a constant $c=c(n,N,p)$. If we put $\bar{h}=h-\bar{g}$, then by simple substitution we see that we can shift attention 
from (\ref{sys1}) to the system
\begin{equation}\label{sys1sub}
\left\{
\begin{array}{ll}
-\mathrm{div} \mathbb{A}\nabla \bar{h} = -\mathrm{div} V & \mbox{ in } \BB ,\\
\bar{h}|_{\partial \BB}=0 & \mbox{ on } \partial \BB ,
\end{array}
\right.
\end{equation}
where $V=\mathbb{A}\nabla \bar{g} \in \LL^{p}( \BB , \M )$. Now for $p \in [2, \infty )$ existence, uniqueness and $\LL^p$ estimate all
follow from \cite[Theorem 10.15]{Giusti} and (\ref{gagsub}). 

It remains to consider the subquadratic case $p \in (1,2)$. 
In this situation we take $V_j \in \LL^{2}( \BB , \M )$ so $\| V-V_j \|_{\LL^p} \to 0$, and let
$\bar{h}_j \in \WW^{1,2}_{0}( \BB , \RN )$ be the unique solution to
\begin{equation}\label{sys1sub1}
\left\{
\begin{array}{ll}
-\mathrm{div} \mathbb{A}\nabla \bar{h}_j = -\mathrm{div} V_j & \mbox{ in } \BB ,\\
\bar{h}_{j}|_{\partial \BB}=0 & \mbox{ on } \partial \BB .
\end{array}
\right.
\end{equation}
Note that $W_j = | \nabla \bar{h}_{j}|^{p-2}\nabla \bar{h}_j \in \LL^{p^{\prime}}( \BB , \M )$, where $p^{\prime} \in (2,\infty )$ is the H\"{o}lder
conjugate exponent of $p$. Consequently, we infer from the above concluded superquadratic case that the elliptic system
\begin{equation}\label{sys1sub2}
\left\{
\begin{array}{ll}
-\mathrm{div} \mathbb{A}\nabla \varphi_j = -\mathrm{div} W_j & \mbox{ in } \BB ,\\
\varphi_{j}|_{\partial \BB}=0 & \mbox{ on } \partial \BB 
\end{array}
\right.
\end{equation}
admits a unique solution $\varphi_j \in \WW^{1,p^{\prime}}_{0}( \BB , \RN )$ with
\begin{equation}\label{estfi}
\| \varphi_j \|_{\WW^{1,p^{\prime}}} \leq c \| W_j \|_{\LL^{p^{\prime}}} = c \| \nabla \bar{h}_{j} \|_{\LL^p}^{p-1}
\end{equation}
where $c=c(n,N,p^{\prime},\tfrac{\alpha}{\beta})$. If we test (\ref{sys1sub1}) with $\varphi_j$ and use that $\mathbb{A}$ is symmetric, then
$\| \nabla \bar{h}_j \|_{\LL^p} \leq c \| V_j \|_{\LL^p}$ results. Now Poincar\'{e}'s inequality and (\ref{gagsub}) easily allow us to conclude
that there exists a solution to (\ref{sys1}) satisfying the $\LL^p$ estimate. It remains to prove uniqueness in the subquadratic case. To that
end we assume $h \in \WW^{1,p}_{0}( \BB , \RN )$ satisfies $-\mathrm{div} \mathbb{A}\nabla h = 0$ in $\BB$. From Lemma \ref{Weyl} we know that
$h \in \CC^{\infty}( \BB , \RN )$ and so if we for $r \in (0,1)$ define $h_{r}(x)=h(rx)$, then clearly 
$h_{r}|_{\partial \BB} \in \WW^{\frac{1}{2},2}( \partial \BB , \RN )$ and $-\mathrm{div} \mathbb{A}\nabla h_{r}=0$ in $\BB$. By uniqueness of $\WW^{1,2}$
solutions it follows from the above that $\| h_{r} \|_{\WW^{1,p}} \leq c\| h_{r}|_{\partial \BB} \|_{\WW^{1-\frac{1}{p},p}}$. But clearly $h_r \to h$ in
$\WW^{1,p}( \BB , \RN )$ as $r \nearrow 1$, so $\| h_{r}|_{\partial \BB} \|_{\WW^{1-\frac{1}{p},p}} \to 0$ as $r \nearrow 1$ by the continuity of trace,
and thus $h=0$.

(b): Since $q \in [2, \infty )$ the assertion follows directly from \cite{Giusti}, see (10.60)--(10.63) 
on pp.~369--370. \hfill $\square$

\subsection{Quasiconvexity}

We start by displaying \textsc{Morrey}'s definition of quasiconvexity \cite{Morrey,MorreyB}:

\begin{definition}\label{defqc}
A continuous integrand $G \colon \M \to \R$ is quasiconvex at $z_{0} \in \M$ provided
$$
G(z_{0})\leq \int_{(0,1)^{n}} \! G(z_{0}+\nabla \varphi (x))\, \dd x
$$
holds for all compactly supported Lipschitz maps $\varphi \colon (0,1)^{n} \to \RN$. It is quasiconvex if it is quasiconvex at all $z_{0} \in \M$.
\end{definition}

It is well-known (see \cite{Dacorogna,MorreyB}) that quasiconvexity implies rank-one convexity, and that rank-one convexity and
linear growth, say $|G(z)| \leq L(|z|+1)$ for all $z \in \M$, yield a Lipschitz bound that for $\CC^1$ integrands takes the form
\begin{equation}\label{lip}
|G^{\prime}(z)| \leq cL \quad \forall \, z \in \M .
\end{equation}
The proof in \cite{BKK} gives (\ref{lip}) with the constant $c=\sqrt{\min \{ n,N \}}$. 

When a quasiconvex integrand $G$ has linear growth it means that the quasiconvexity inequality can be tested by more general maps.
We have from \cite[Proposition 1]{KR1}:

\begin{lemma}\label{1qc}
Assume $G \colon \M \to \R$ is quasiconvex and of linear growth. If $\omega$ is a bounded Lipschitz domain in $\Rn$, 
$\varphi \in \BV ( \omega , \RN )$ and $a \colon \Rn \to \RN$ is affine, then
$$
\Leb (\omega ) G( \nabla a) \leq \int_{\omega} \! G(D\varphi ) + \int_{\partial \omega} \! G^{\infty} 
\bigl( (a-u) \otimes \nu_{\omega} \bigr) \, \dd \mathscr{H}^{n-1} 
$$
holds, where $\nu_{\omega}$ is the outward unit normal on $\partial \omega$.
\end{lemma}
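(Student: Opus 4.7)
The plan is to extend $\varphi$ across $\partial\omega$ by the affine map $a$, area-strictly approximate the resulting compactly supported $\BV$-perturbation of $a$ by smooth compactly supported perturbations, apply Morrey's quasiconvexity inequality to those perturbations, and finally pass to the limit using Lemma~\ref{Econt}; the jump of the extension across $\partial\omega$ is exactly what produces the $G^{\infty}$-term in the conclusion. (I read the $u$ in the statement as a typo for $\varphi$.)

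First I would fix an open ball $B$ with $\overline{\omega}\subset B$ and set $\tilde{\varphi}:=\varphi$ on $\omega$, $\tilde{\varphi}:=a$ on $B\setminus\omega$. Because $\omega$ is Lipschitz, $\varphi$ admits an $\mathcal{H}^{n-1}$-a.e.\ defined inner trace on $\partial\omega$, and the standard jump formula for $\BV$ pastings yields $\tilde{\varphi}\in\BV(B,\RN)$ with
$$
D\tilde{\varphi}=D\varphi\lfloor\omega+\nabla a\,\Leb\lfloor(B\setminus\overline{\omega})+(a-\varphi)\otimes\nu_{\omega}\,\mathcal{H}^{n-1}\lfloor\partial\omega,
$$
the jump direction being $\nu_{\omega}$ because $a$ is the outer value. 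Crucially, $\tilde{\varphi}-a$ has essential support in the compact set $\overline{\omega}\subset B$; extending by zero to $\Rn$ and mollifying, $\psi_{\varepsilon}:=\rho_{\varepsilon}\ast(\tilde{\varphi}-a)$ lies in $\CC^{\infty}_{c}(B,\RN)$ for all sufficiently small $\varepsilon>0$, and the area-strict density-by-mollification cited in Subsection~2.1 gives $\psi_{\varepsilon}\to\tilde{\varphi}-a$ area-strictly on $B$. Reshetnyak's theorem applied to the continuous linear-growth integrand $z\mapsto E(z+\nabla a)$, which admits $|\cdot|$ as regular recession, then upgrades this to $\varphi_{\varepsilon}:=a+\psi_{\varepsilon}\to\tilde{\varphi}$ area-strictly on $B$.

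Each $\psi_{\varepsilon}$ is a compactly supported Lipschitz perturbation of $a$ on $B$, so a direct rescaling reduces matters to Definition~\ref{defqc} and gives
$$
\Leb(B)\,G(\nabla a)\leq\int_{B}G(\nabla a+\nabla\psi_{\varepsilon})\,\dd x=\int_{B}G(\nabla\varphi_{\varepsilon})\,\dd x.
$$
Quasiconvexity together with continuity forces rank-one convexity, so combined with the linear growth of $G$ the hypotheses of Lemma~\ref{Econt} are met, and area-strict convergence $\varphi_{\varepsilon}\to\tilde{\varphi}$ on $B$ upgrades the previous inequality to
$$
\Leb(B)\,G(\nabla a)\leq\int_{B}G(D\tilde{\varphi})=\int_{\omega}G(D\varphi)+G(\nabla a)\,\Leb(B\setminus\omega)+\int_{\partial\omega}G^{\infty}\bigl((a-\varphi)\otimes\nu_{\omega}\bigr)\,\dd\mathcal{H}^{n-1},
$$
where the equality comes from the Lebesgue--Radon--Nikod\'ym decomposition of $D\tilde{\varphi}$ recorded above. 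Absorbing $G(\nabla a)\Leb(B\setminus\omega)$ into the left-hand side yields the claim. The main obstacle, and the point I would double-check most carefully, is the twin requirement that the approximants $\varphi_{\varepsilon}$ both (i) remain exactly equal to $a$ near $\partial B$, so that Morrey's inequality carries no boundary correction, and (ii) converge area-strictly on \emph{all} of $B$, so that Lemma~\ref{Econt} recovers the full $G^{\infty}$-jump contribution on $\partial\omega$; both are reconciled by the compactness of $\mathrm{supp}(\tilde{\varphi}-a)$ in $B$, which turns mollification into a genuinely compactly-supported smoothing that preserves area-strict convergence.
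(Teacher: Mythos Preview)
Your proof is correct. The paper does not give its own argument for this lemma but simply cites \cite[Proposition~1]{KR1}; your self-contained proof---extend $\varphi$ by $a$ across $\partial\omega$, mollify the compactly supported $\BV$ perturbation to obtain admissible test maps for quasiconvexity on a larger ball, then pass to the limit via Lemma~\ref{Econt} and subtract the contribution from $B\setminus\omega$---is precisely the natural route and is how such results are typically established. Your identification of the typo ($u$ should be $\varphi$) is also correct.
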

As explained in the Introduction our quasiconvexity assumption $\mathrm{(H2)}$, that we shall refer to as 
\emph{strong quasiconvexity}, is very natural when compared to the minimal set of conditions that allows one to prove 
existence of a $\BV$ minimizer by use of the direct method.
We emphasize that quasiconvexity is much more general than convexity and refer to \cite{Dacorogna} for a long list 
of examples of nonconvex quasiconvex integrands. That there exists nonconvex quasiconvex integrands of linear growth 
is also well-known (see \cite{Mu,KZ}). Here we shall briefly illustrate the abundance of nonconvex integrands $F$ 
satisfying the hypotheses $\mathrm{(H0)}$, $\mathrm{(H1)}$, $\mathrm{(H2)}$. In fact many of these integrands are 
nonconvex in the sense that also the functional
$$
v \mapsto \int_{\Omega} \! F( \nabla v) \, \dd x
$$
is nonconvex on the Dirichlet class $\WW^{1,1}_{g}$.

\begin{proposition}\label{noncvx}
Let $F \colon \M \to \R$ be quasiconvex and assume that for some exponent $p \in (1,\infty )$ and constant $L \geq 1$
we have the $p$-coercivity-growth condition:
$$
|z|^{p} \leq F(z) \leq L\bigl( |z|^{p}+1 \bigr) \quad \forall \, z \in \M .
$$
Then there exists a sequence $(F_j )$ of integrands $F_{j} \colon \M \to \R$ satisfying for
some constants $\ell_j$, $L_j > 0$,
$$
\begin{array}{ll}
(0) \hspace{1cm} & F_{j}\;\text{ is } \CC^{\infty}\\
{} & {}\\
(1) \hspace{1cm} & |z|-1 \leq F_{j}(z) \leq L_{j}(|z| + 1) \quad \forall \, z \in \M\\
{} & {}\\
(2) \hspace{1cm} & z \mapsto F(z)-\ell_{j} E(z) 
\text{ is quasiconvex,}
\end{array}
$$
such that $F_{j}(z) \nearrow F(z)$ as $j \nearrow \infty$ pointwise in $z \in \M$.
\end{proposition}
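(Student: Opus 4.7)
The plan is a three-step construction followed by a monotonization: (i) truncate $F$ and take the quasiconvex envelope to obtain a quasiconvex linear-growth lower approximant of $F$; (ii) sharpen by the operation $H \mapsto \mathcal{Q}(H-\ell E)+\ell E$ to force strong quasiconvexity; (iii) mollify in the matrix variable to get $C^{\infty}$ smoothness; finally, pass to running maxima to ensure $F_j\nearrow F$. Throughout, the following standard preservation properties are used: the quasiconvex envelope $\mathcal{Q}$ is monotone and sends linear-growth continuous functions to continuous quasiconvex functions of linear growth; pointwise maxima of quasiconvex functions are quasiconvex; and the convolution (in the matrix variable) of a quasiconvex function with a nonnegative radial kernel is again quasiconvex, as one sees by Fubini and Morrey's definition applied at the translated base-point $z_0+w$.

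In detail, choose $R_j\nearrow\infty$ and set
\[
\tilde F_j(z) := \min\bigl\{F(z),\,c_j+j|z|\bigr\},\qquad c_j:=L(1+R_j^{p}),
\]
so $\tilde F_j = F$ on $\{|z|\le R_j\}$, $\tilde F_j\nearrow F$ pointwise, and the $p$-coercivity $|z|^{p}\le F(z)$ gives the bounds $|z|-1\le \tilde F_j(z)\le c_j+j|z|$. Let $G_j:=\mathcal{Q}\tilde F_j$ be the quasiconvex envelope. Then $G_j$ is quasiconvex, continuous, $G_j\le F$ (since $F$ itself is a quasiconvex majorant of $\tilde F_j$), and $|z|-1\le G_j(z)\le c_j+j|z|$ (the lower bound because the convex function $z\mapsto|z|-1$ is quasiconvex and lies below $\tilde F_j$). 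Next fix $\ell_j\in(0,\tfrac12)$ and set
\[
H_j(z):=\mathcal{Q}\bigl[G_j-\ell_j E\bigr](z)+\ell_j E(z),
\]
so by construction $H_j-\ell_j E$ is quasiconvex, $H_j\le G_j\le F$, and using $E(z)\le|z|$ together with $E(z)\ge|z|-1$ one checks $|z|-1-\ell_j\le H_j(z)\le c_j+j|z|$. Finally, convolve in the matrix variable with a standard mollifier $\rho_{\delta_j}$ on $\M$ and compensate on the $E$-part:
\[
F_j(z):=(\rho_{\delta_j}\ast H_j)(z)+\ell_j\bigl(E(z)-(\rho_{\delta_j}\ast E)(z)\bigr).
\]
Then $F_j\in C^{\infty}$ with linear growth, and $F_j-\ell_j E=\rho_{\delta_j}\ast(H_j-\ell_j E)$ is a weighted average of translates of the quasiconvex function $H_j-\ell_j E$, hence quasiconvex. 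Passing to the running maxima $F_j^{\ast}:=\max\{F_1,\dots,F_j\}$ with $\ell_j^{\ast}:=\min\{\ell_1,\dots,\ell_j\}>0$ produces a monotone sequence with the required properties, because $F_j^{\ast}-\ell_j^{\ast}E=\max_{i\le j}(F_i-\ell_j^{\ast}E)$ is a maximum of quasiconvex functions.

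The main obstacle is the pointwise convergence $F_j^{\ast}(z)\nearrow F(z)$. After sending $\delta_j,\ell_j\to 0$ (their effect on $F_j$ is controlled uniformly on bounded sets), this reduces to proving $G_j(z_0)\nearrow F(z_0)$ for every $z_0\in\M$. Monotonicity of $\tilde F_j$ gives monotonicity of $G_j$, so the pointwise limit $G_\infty:=\sup_j G_j$ is quasiconvex with $G_\infty\le F$; the delicate step is the opposite inequality. Represent $G_j(z_0)$ through a near-optimal homogeneous gradient Young measure $\nu_j$ with barycenter $z_0$ and $\int\tilde F_j\,d\nu_j\le G_j(z_0)+\varepsilon$; using quasiconvexity of $F$ in the form $F(z_0)\le\int F\,d\nu_j$, one obtains
\[
F(z_0)-G_j(z_0)\le \int(F-\tilde F_j)\,d\nu_j=\int_{\{|z|>R_j'\}}\!\bigl(F(z)-c_j-j|z|\bigr)\,d\nu_j(z),
\]
where $R_j'\ge R_j$ is the threshold beyond which $\tilde F_j=c_j+j|\cdot|$. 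The $p$-coercivity $|z|^{p}\le F$, together with the uniform bound $\int\tilde F_j\,d\nu_j\le F(z_0)+\varepsilon$, forces tightness of $\nu_j$: one has $j\int_{\{|z|>R_j'\}}|z|\,d\nu_j(z)\le F(z_0)+\varepsilon$, which combined with the $L^{p}$ control of the absolutely continuous part yields $\nu_j(\{|z|>R_j'\})\to 0$ and the integrand is controlled by the $p$-growth of $F$; letting $j\to\infty$ one concludes $G_\infty(z_0)=F(z_0)$.
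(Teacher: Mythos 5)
Your overall architecture---truncate $F$ to linear growth, pass to the quasiconvex envelope, enforce strong quasiconvexity via $H\mapsto(H-\ell E)^{\mathrm{qc}}+\ell E$, and mollify with the compensation $\ell_j(E-\rho_{\delta_j}\ast E)$ so that $F_j-\ell_jE$ is an average of translates of a quasiconvex function---is sound, and the preservation facts you invoke (monotonicity of the envelope, suprema and translation-averages of quasiconvex functions are quasiconvex) are all correct; the paper itself gives no proof, only the citation to [JK, Prop.~1.10], whose construction yours resembles. The difficulty of the proposition, however, is concentrated entirely in the step you treat last, namely $\tilde F_j^{\mathrm{qc}}(z_0)\to F(z_0)$, and there your argument has a genuine gap. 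From $\int\tilde F_j\,\dd\nu_j\le F(z_0)+\varepsilon$ and $\tilde F_j=c_j+j|\cdot|$ on the tail you correctly obtain $j\int_{\{|z|>R_j'\}}|z|\,\dd\nu_j\le C$, hence control of the mass and of the \emph{first} moment of the tail of $\nu_j$. But the quantity that must vanish is $\int_{\{|z|>R_j'\}}F\,\dd\nu_j\le L\int_{\{|z|>R_j'\}}(1+|z|^{p})\,\dd\nu_j$, i.e.\ the \emph{$p$-th} moment of the tail, and this is not controlled: a near-minimizer of the truncated, linear-growth functional may place gradient of size $T_j$ on a set of measure $\sim(jT_j)^{-1}$, which is consistent with all of your bounds (energy cost $\sim c_j/(jT_j)+1$, first tail moment $\sim 1/j$, tail mass $\to 0$) and yet gives $\int_{\mathrm{tail}}|z|^{p}\,\dd\nu_j\sim T_j^{p-1}/j\to\infty$ once $T_j\gg j^{1/(p-1)}$. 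There is no ``$\LL^{p}$ control'' available here: the coercivity yields $\int_{\{|z|\le R_j\}}|z|^{p}\,\dd\nu_j\le C$, a uniform bound rather than a vanishing tail, and nothing in $\LL^p$ beyond $R_j$. Closing this gap is the actual content of the cited result: one uses the coercivity to show the generating gradients are equi-integrable (e.g.\ $\int_{\{\lambda<|z|\le R_j\}}|z|\,\dd\nu_j\le C\lambda^{1-p}$), passes to a limiting homogeneous gradient Young measure with finite $p$-th moment, and then needs the nontrivial fact that Jensen's inequality $F(z_0)\le\int F\,\dd\nu$ holds for $p$-growth quasiconvex integrands and such measures---equivalently, one runs a Lipschitz-truncation argument on the $\varphi_j$ themselves. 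As written, ``the integrand is controlled by the $p$-growth of $F$'' is a non sequitur.

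Two smaller, repairable defects. First, the running maximum $F_j^{\ast}=\max\{F_1,\dots,F_j\}$ destroys the $\CC^{\infty}$ smoothness demanded in (0), so monotonization must be arranged differently, for instance by building strictly increasing gaps into the sequence before mollifying. Second, after mollification you have neither $F_j\le F$ (which the monotone convergence $F_j\nearrow F$ requires) nor the exact lower bound $|z|-1$, since your construction only yields $|z|-1-\ell_j-O(\delta_j)$; both can be restored by using the global Lipschitz bound (\ref{lip}) for rank-one convex functions of linear growth to control the mollification error and by exploiting the slack $|z|^{p}\ge(1+\epsilon)|z|-C_\epsilon$ afforded by $p>1$, but neither point is addressed.
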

The proof is an easy adaptation of \cite[Proposition 1.10]{JK}. Observe that nonconvexity of $F$ or the 
corresponding functional must be inherited by elements of the approximating sequence $F_j$ for sufficiently 
large values of $j$. We could therefore for instance apply Proposition \ref{noncvx} to the integrands constructed 
in \cite{MuSv} to get the required examples. In fact, in view of the flexibility of the constructions in \cite{MuSv} we 
could also arrange it so that the Euler-Lagrange system $-\mathrm{div}F^{\prime}_{j}( \nabla v) =0$ admits, say compactly 
supported Lipschitz maps that are nowhere $\CC^1$ as weak solutions. It is thus clear that all kinds of behaviour of 
quasiconvex integrands of $p$-growth that play out in bounded sets of matrix space can be reproduced by integrands 
satisfying the hypotheses $\mathrm{(H0)}$, $\mathrm{(H1)}$, $\mathrm{(H2)}$. In particular, in view of the nonconvex 
nature of the variational problems it becomes relevant to investigate the regularity of various classes of local 
minimizers as done in the $p$-growth case in \cite{KT,CaNa,Sz}. We leave this for future investigations and focus in the 
present paper entirely on absolute minimizers in the sense of (\ref{intro5}).

\subsection{Extremality of minimizers}

The following result is closely related to \cite[Theorem 3.7]{Anz}, but it concerns more general 
integrands that are not covered there.

\begin{lemma}\label{extremal}
Assume that $F \colon \M \to \R$ is $\CC^1$, rank-one convex and that $|F(z)| \leq L(|z|+1)$ holds for all $z \in \M$.
Then for any local minimizer $u \in \BV ( \Omega , \RN )$ of the variational integral 
$\mathfrak{F}(v, \Omega ) = \int_{\Omega} \! F(Dv)$ we have that $F^{\prime}(\nabla u) \in \LL^{\infty}( \Omega , \M )$
and
\begin{equation}\label{extrem}
-\int_{\Omega} \! F^{\infty}(D^{s}\varphi ) \leq \int_{\Omega} \! F^{\prime}(\nabla u)[ \nabla \varphi ] \, \dd x \leq 
\int_{\Omega} \! F^{\infty}(-D^{s}\varphi )
\end{equation}
holds for all $\varphi \in \BV_{0}( \Omega , \RN )$. In particular, $F^{\prime}( \nabla u)$ is row-wise divergence free.
\end{lemma}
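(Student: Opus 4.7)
The $\LL^{\infty}$ bound $F'(\nabla u) \in \LL^{\infty}(\Omega,\M)$ is immediate from the Lipschitz estimate \eqref{lip}, which gives $|F'(z)| \leq cL$ uniformly in $z$. The rest of the lemma follows from a single one-sided variational argument. Fix $\varphi \in \BV_{0}(\Omega,\RN)$ with compact support in $\Omega$ (the general case is recovered by approximation at the end) and $t \in \R$ small, so $v_{t} := u + t\varphi$ satisfies $v_{t}-u = t\varphi$ compactly supported in $\Omega$. Local minimality then yields $0 \leq \mathfrak{F}(v_{t},\Omega) - \mathfrak{F}(u,\Omega) = A(t) + B(t)$, where
$$
A(t) \;:=\; \int_{\Omega} \bigl[F(\nabla u + t\nabla\varphi) - F(\nabla u)\bigr]\, \dd x, \qquad B(t) \;:=\; \int_{\Omega} F^{\infty}(D^{s}u + tD^{s}\varphi) - \int_{\Omega} F^{\infty}(D^{s}u).
$$

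The absolutely continuous piece is handled by dominated convergence: the $\CC^{1}$ regularity and the Lipschitz bound dominate the difference quotients $t^{-1}[F(\nabla u + t\nabla\varphi) - F(\nabla u)]$ uniformly by $cL|\nabla\varphi| \in \LL^{1}$, hence $A(t)/t \to \int_{\Omega} F'(\nabla u)[\nabla\varphi]\,\dd x$ as $t \to 0$ from either side. The heart of the argument is the one-sided bound
$$
B(t) \;\leq\; |t| \int_{\Omega} F^{\infty}\bigl(\operatorname{sgn}(t)\, D^{s}\varphi\bigr),
$$
which I would derive from the pointwise subadditivity
$$
F^{\infty}(P + Q) \;\leq\; F^{\infty}(P) + F^{\infty}(Q) \qquad \text{for all } P \in \M \text{ and rank-one } Q \in \M.
$$
To prove this subadditivity, rank-one convexity of $F^{\infty}$ along $Q$ makes $s \mapsto F^{\infty}(P + sQ)$ convex on $\R$, so for every $M > 1$,
$$
F^{\infty}(P + Q) \;\leq\; \bigl(1 - \tfrac{1}{M}\bigr) F^{\infty}(P) + \tfrac{1}{M} F^{\infty}(P + MQ) \;=\; \bigl(1 - \tfrac{1}{M}\bigr) F^{\infty}(P) + F^{\infty}\bigl(\tfrac{P}{M} + Q\bigr),
$$
where the equality uses positive $1$-homogeneity of $F^{\infty}$; letting $M \to \infty$ and invoking continuity of $F^{\infty}$ (a consequence of rank-one convexity plus linear growth) yields the claim. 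The pointwise inequality is then transferred to the integrated bound on $B(t)$ by applying it with $P = \dd D^{s}u/\dd\sigma$ and $Q = t\, \dd D^{s}\varphi/\dd\sigma$ for $\sigma := |D^{s}u| + |D^{s}\varphi|$: by \textsc{Alberti}'s rank-one theorem both $P$ and $Q$ are scalar multiples of rank-one matrices $\sigma$-almost everywhere, and integrating against $\sigma$ together with positive $1$-homogeneity reassembles the estimate.

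With both estimates in place, for $t > 0$ the relations $A(t) + B(t) \geq 0$ and $B(t)/t \leq \int_{\Omega} F^{\infty}(D^{s}\varphi)$ force, in the limit $t \to 0^{+}$, the lower bound $\int_{\Omega} F'(\nabla u)[\nabla\varphi]\,\dd x \geq -\int_{\Omega} F^{\infty}(D^{s}\varphi)$. For $t < 0$, division by the negative scalar $t$ flips signs, yielding $A(t)/t \leq -B(t)/t \leq \int_{\Omega} F^{\infty}(-D^{s}\varphi)$, which in the limit $t \to 0^{-}$ gives the upper bound in \eqref{extrem}. The row-wise divergence-free property of $F'(\nabla u)$ is then the special case $\varphi \in \CC^{\infty}_{c}(\Omega,\RN)$, where $D^{s}\varphi = 0$ collapses both bounds to zero. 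Finally, the compact-support assumption is removed by approximating a general $\varphi \in \BV_{0}(\Omega,\RN)$ by compactly supported BV maps via a cut-off that exploits the vanishing trace, together with the uniform $\LL^{\infty}$ bound on $F'(\nabla u)$ and continuity of $w \mapsto \int_{\Omega} F^{\infty}(\pm D^{s}w)$ along the approximating sequence. The principal obstacle throughout is the rank-one subadditivity of $F^{\infty}$; once that lemma is secured, the remainder is a routine one-sided variational calculus.
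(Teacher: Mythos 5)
Your proof is correct and follows essentially the same route as the paper's: a one-sided first variation, dominated convergence (via the Lipschitz bound coming from rank-one convexity and linear growth) for the absolutely continuous part, rank-one subadditivity of $F^{\infty}$ combined with Alberti's rank-one theorem for the singular part, and a final density argument for general $\varphi \in \BV_{0}$. The only difference is that you derive the subadditivity $F^{\infty}(P+Q) \leq F^{\infty}(P)+F^{\infty}(Q)$ for rank-one $Q$ directly from convexity along rank-one lines and positive $1$-homogeneity, where the paper cites \cite[Lemma 2.5]{KK}; both versions tacitly rely on $F^{\infty}$ inheriting rank-one convexity from $F$, which is standard but worth a word.
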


\begin{proof}
First we recall that linear growth and rank-one convexity combine to give Lipschitz continuity
(\ref{lip}), hence the matrix valued map $F^{\prime}( \nabla u)$ is bounded. Next, for $\varphi \in \BV ( \Omega ,\RN )$ 
with compact support in $\Omega$ and each $\varepsilon \geq 0$ we put $\mu := |D^{s}(u+\varepsilon \varphi )|+|D^{s}u|+|D^{s}\varphi |$.
Then we may write
$$
F^{\infty} \bigl( D^{s}(u+\varepsilon \varphi ) \bigr) - F^{\infty} \bigl( D^{s}u \bigr) = \biggl( F^{\infty} \left( 
\frac{\dd D^{s}u}{\dd \mu} + \varepsilon \frac{\dd D^{s}\varphi}{\dd \mu} \right) - F^{\infty} \left( \frac{\dd D^{s}u}{\dd \mu} \right) 
\biggr) \mu .
$$
Here we have according to \cite{Alberti} that
$$
\mathrm{rank} \biggl( \frac{\dd D^{s}\varphi}{\dd \mu} \biggr) \leq 1 \quad \mu \mbox{--a.e.}
$$
and thus from \cite[Lemma 2.5]{KK} and the assumptions on $F$ we infer that
$$
F^{\infty}\bigl( D^{s}(u+\varepsilon \varphi) \bigr) - F^{\infty} \bigl( D^{s}u \bigr) \leq \varepsilon F^{\infty} \bigl( D^{s}\varphi \bigr).
$$
Consequently, by local minimality:
\begin{eqnarray*}
0 &\leq& \int_{\Omega} \! F \bigl( (D(u+\varepsilon\varphi) \bigr) - \int_{\Omega} \! F \bigl( Du \bigr)\\
&\leq& \int_{\Omega} \! \int_{0}^{1} \! F^{\prime} (\nabla u +t\varepsilon \nabla \varphi )[\varepsilon \nabla \varphi ] \, \dd t \, \dd x 
+\varepsilon \int_{\Omega} \! F^{\infty} \bigl( D^{s}\varphi \bigr) ,
\end{eqnarray*}
and hence, invoking the Lipschitz bound and Lebesgue's dominated convergence theorem, we arrive at 
$$
0 \leq \int_{\Omega} \! F^{\prime}(\nabla u)[ \nabla \varphi ] \, \dd x +\int_{\Omega} \! F^{\infty} \bigl( D^{s}\varphi \bigr) .
$$
Finally, we extend the above inequality by continuity to hold for all $\varphi \in \BV_{0}( \Omega , \RN )$.
\end{proof}

\section{Boundedness of minimizing sequences and strong quasiconvexity}\label{minseq}

\begin{proposition}\label{sharpening}
Assume $F\colon \M \to \R$ is a continuous integrand of linear growth, let $\Omega \subset \Rn$ be a bounded Lipschitz domain and
$g \in \WW^{1,1}( \Omega , \RN )$. Then minimizing sequences for the variational problem
\begin{equation}\label{vp1}
\inf_{u \in \WW^{1,1}_{g}( \Omega , \RN )} \int_{\Omega} \! F(\nabla u ) \, \dd x 
\end{equation}
are all bounded in $\WW^{1,1}$ if and only if there exist $\ell > 0$ and $z_{0} \in \M$ such that $F-\ell E$ is quasiconvex at $z_0$.
\end{proposition}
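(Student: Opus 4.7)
My plan is to split the biconditional into two implications routed through the intermediate condition of mean coercivity displayed in (\ref{mean}). As a preliminary observation, using only linear growth of $F$, boundedness of minimizing sequences for (\ref{vp1}) in $\WW^{1,1}$ is equivalent to mean coercivity on $\WW^{1,1}_g$: one direction is immediate, while if mean coercivity failed, one could select $v_k \in \WW^{1,1}_g$ with $\int_\Omega F(\nabla v_k) < \tfrac{1}{k}\int_\Omega |\nabla v_k| - k$, forcing $\int_\Omega F(\nabla v_k) \to -\infty$ and, by the lower bound $F \geq -L(|\cdot|+1)$, also $\int_\Omega |\nabla v_k| \to \infty$, producing an unbounded minimizing sequence.

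For sufficiency (quasiconvexity at $z_0$ of $F - \ell E$ implies mean coercivity), I would use a cut-off and pasting construction. Choose cubes $Q \supset\supset \bar\Omega$ and $Q' \supset\supset Q$, a cut-off $\eta \in \CC_c^\infty(Q')$ with $\eta \equiv 1$ on $Q$, a fixed extension $\bar g \in \WW^{1,1}(\Rn,\RN)$ of $g$, and the affine map $a_0(x) := z_0 x$. Given $v \in \WW^{1,1}_g(\Omega,\RN)$, define $\tilde v := v$ on $\Omega$ and $\tilde v := \bar g$ on $Q' \setminus \Omega$; this lies in $\WW^{1,1}(Q')$ because the interior and exterior traces agree on $\partial \Omega$. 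Set $\psi := \eta(\tilde v - a_0)$, which is compactly supported in $Q'$. After extending the quasiconvexity inequality at $z_0$ for $F - \ell E$ from Lipschitz to $\WW^{1,1}_0$ test functions by density and linear growth of $F$, one obtains
\[
|Q'|(F - \ell E)(z_0) \leq \int_{Q'}(F - \ell E)(z_0 + \nabla \psi)\,\dd x.
\]
Splitting the right-hand side on $\Omega$ (where $\eta \equiv 1$ gives $z_0 + \nabla \psi = \nabla v$), on $Q \setminus \Omega$ (where $z_0 + \nabla \psi = \nabla \bar g$), and on $Q' \setminus Q$ (where the integrand depends only on the fixed data $\bar g, a_0, \eta$), the latter two contributions collapse into a finite constant $C_0 = C_0(\Omega, g, z_0, \ell, \eta)$. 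Rearranging and invoking the pointwise estimate $E(z) \geq (\sqrt{2}-1)(|z|-1)$ from (\ref{minb}) then delivers mean coercivity with $c_1 = \ell(\sqrt{2}-1)$.

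For necessity, I would argue contrapositively: assume $F - \ell E$ is quasiconvex at no point of $\M$ for any $\ell > 0$, with the aim of violating mean coercivity. Fix $\ell \in (0, c_1/(\sqrt{2}-1))$, where $c_1$ is the hypothetical coercivity constant. Nowhere-quasiconvexity of $F - \ell E$ provides, for each $z \in \M$, a compactly supported Lipschitz $\varphi_z$ on the unit cube with
\[
\dashint_{(0,1)^n} (F - \ell E)(z + \nabla \varphi_z) \, \dd x - (F - \ell E)(z) < 0.
\]
Following the strategy of \cite{CK}, one amplifies these local violations by a Vitali-type covering of a ball $B \subset\subset \Omega$ and pastes rescaled, translated copies of $\varphi_{z_i}$ on each subcube. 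A careful choice of the reference gradients $z_i$ from a bounded compact set (so that the Lipschitz constants of the $\varphi_{z_i}$'s stay uniform) and a tuning of cube sizes produce a sequence $(v_k) \subset \WW^{1,1}_g$ with $\int_\Omega(F - \ell E)(\nabla v_k) \to -\infty$ while $\int_\Omega E(\nabla v_k) \leq \int_\Omega |\nabla v_k|$ (by (\ref{minb})) remains controlled, so that $\int_\Omega F(\nabla v_k) = \int_\Omega(F-\ell E)(\nabla v_k) + \ell \int_\Omega E(\nabla v_k) < c_1\int_\Omega |\nabla v_k| + c_2$ for $k$ large, contradicting mean coercivity.

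The principal obstacle is this necessity step: one must engineer a pasting that genuinely accumulates an arbitrarily negative integral of $F - \ell E$ while the $\LL^1$-mass of the gradients stays bounded. This is precisely where the techniques of Chen and Kristensen \cite{CK} are crucial; the reference integrand $E$ enters decisively through (\ref{minb}), which couples pointwise $|z|$-coercivity to the "strictness" offset $\ell E$ in a sharp way.
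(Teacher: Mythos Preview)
Your sufficiency argument (quasiconvexity of $F-\ell E$ at $z_0$ implies mean coercivity, hence boundedness of minimizing sequences) is fine and essentially reproduces what the paper cites from \cite{CK}. The problem lies in your preliminary equivalence and in the necessity direction.

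\textbf{The preliminary step contains a genuine gap.} From the failure of mean coercivity you correctly extract $v_k \in \WW^{1,1}_g$ with $\int_\Omega F(\nabla v_k) < \tfrac{1}{k}\int_\Omega |\nabla v_k| - k$, and the linear lower bound $F \geq -L(|\cdot|+1)$ forces $\int_\Omega |\nabla v_k| \to \infty$. But your claim that $\int_\Omega F(\nabla v_k) \to -\infty$ does \emph{not} follow: nothing prevents, say, $\int_\Omega |\nabla v_k| \sim k^3$ and $\int_\Omega F(\nabla v_k) \sim k^2/2$, which satisfies the inequality yet $\int_\Omega F(\nabla v_k) \to +\infty$. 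In that case $(v_k)$ is unbounded but is \emph{not} a minimizing sequence (assuming the infimum is finite), so you have not produced the required counterexample. The paper handles this implication by a substantially different route: it first reduces to the case of a simplex with affine boundary data and exploits the convexity of the auxiliary function
\[
\Theta(t) = \inf\Bigl\{ \dashint_X F(z_0 + \nabla\varphi)\,\dd x : \varphi \in \WW^{1,1}_0(X,\RN),\ \dashint_X |\nabla\varphi|\,\dd x \geq t \Bigr\}
\]
(established in \cite{CK}) to show that $\Theta$ is either affine with positive slope (mean coercivity) or constant (yielding an unbounded \emph{minimizing} sequence). The general domain case is then obtained by triangulating and modifying a given minimizing sequence simplex-by-simplex. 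This convexity of $\Theta$ is precisely the missing ingredient that lets one control the energy while pushing the gradient norm to infinity.

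\textbf{Your necessity step is also problematic and diverges from the paper.} Your contrapositive sketch asks for $(v_k)$ with $\int_\Omega (F-\ell E)(\nabla v_k) \to -\infty$, but the nowhere-quasiconvexity of $F-\ell E$ only gives, at each $z$, a \emph{fixed} strictly negative defect, not one that can be amplified without bound by pasting; and you yourself note this is the principal obstacle. The paper avoids this entirely: from mean coercivity it works \emph{directly}, taking $G = F - \ell E$ with $\ell$ slightly below the coercivity constant, picking a recovery sequence $(\varphi_j)$ for $G^{\mathrm{qc}}(0)$ via the Dacorogna formula, and passing to a weak$^\ast$ limit of the gradient distributions $\nu_j$ (which have uniformly bounded first moments by coercivity). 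On the support of the limiting probability measure $\nu$ one obtains $G = G^{\mathrm{qc}}$, so $G$ is quasiconvex at any such point. This measure-theoretic argument identifies a specific $z_0$ and bypasses the amplification issue altogether.
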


\begin{proof}
The \emph{if} part follows from \cite[Theorem 1.1]{CK} and to prove the \emph{only if} part we must adapt the proofs from \cite{CK}. 
We proceed in three steps.

\noindent
\emph{Step 1.} Let $X$ be a simplex in $\Rn$ and $z_{0} \in \M$. We show that if all minimizing sequences for the variational problem 
(\ref{vp1}) in the special case $\Omega =X$ and $g(x) = z_{0}x$ are $\WW^{1,1}$ bounded, then we can find constants $\alpha > 0$, $\beta \in \R$ 
depending only on $F$, $X$, $z_0$, so
\begin{equation}\label{meancoercive}
\int_{X} \! F( z_{0}+\nabla \varphi ) \, \dd x \geq \int_{X} \! \biggl( \alpha | \nabla \varphi | + \beta \biggr) \, \dd x
\end{equation}
holds for all $\varphi \in \WW^{1,1}_{0}(X , \RN )$. We express this by saying that $F$ is mean coercive, and recall from 
\cite[Theorem 1.1]{CK} that this is a property of $F$ (so that we have a bound like (\ref{meancoercive}) for any bounded Lipschitz
domain $\Omega$ and any $g \in \WW^{1,1}( \Omega , \RN )$ with $\alpha$, $\beta$ now depending on $F$, $\Omega$, $g$).
Following \cite{CK} we consider the auxiliary function
$$
\Theta (t) = \inf \left\{ \dashint_{X} \! F(z_{0}+\nabla \varphi ) \, \dd x : \, \varphi \in \WW^{1,1}_{0}(X, \RN ), \, 
\dashint_{X} \! | \nabla \varphi | \, \dd x \geq t \right\} \quad (t \geq 0)
$$
Because $F$ has linear growth the $\WW^{1,1}$ boundedness of minimizing sequences clearly implies that $\Theta$ is a real-valued 
non-decreasing function. According to \cite[Proposition 3.2]{CK} it is also convex. Consequently, if $\Theta$ is bounded from above, then it must
be constant: $\Theta (t) \equiv \theta$ for all $t \geq 0$, where $\theta \in \R$. But this is impossible as it leads to the
existence of minimizing sequences for (\ref{vp1}) that are not bounded in $\WW^{1,1}$. Hence $\Theta$ is not bounded from above, and so by convexity
we conclude that for some constants $\alpha > 0$, $\beta \in \R$ we must have $\Theta (t) \geq \alpha t + \beta$ for all $t \geq 0$.
Unravelling the definitions we have shown that (\ref{meancoercive}) holds.

\noindent
\emph{Step 2.} We show that if all minimizing sequences for (\ref{vp1}) are $\WW^{1,1}$ bounded, then $F$ is mean coercive.
For this it is easiest to argue by contradiction: Assume that all minimizing sequences for (\ref{vp1}) are $\WW^{1,1}$ bounded, but
that $F$ is not mean coercive. The former, taken together with the linear growth of $F$, means in particular that
$$
m := \inf_{u \in \WW^{1,1}_{g}( \Omega , \RN )} \int_{\Omega} \! F(\nabla u ) \, \dd x \in \R .
$$
The latter allows us by Step 1 to conclude that for any simplex $X \subset \Rn$ and any 
$z_{0} \in \M$, the variational problem (\ref{vp1}) with $\Omega =X$ and $g(x) = z_{0}x$ admits a minimizing sequence that is 
unbounded in $\WW^{1,1}$. Fix a polygonal open subset $\Omega^{\prime} \Subset \Omega$ and note that since $F$ is continuous and of 
linear growth, the functional $v \mapsto \int_{\Omega} \! F(\nabla v) \, \dd x$ is continuous on $\WW^{1,1}( \Omega , \RN )$.
By density of piecewise affine maps in $\WW^{1,1}$ we can therefore find a minimizing sequence $( u_j )$ for (\ref{vp1}) such
that each restriction $u_{j}|_{\Omega^{\prime}}$ is piecewise affine. Let $\tau_j$ be the regular and finite triangulation of $\Omega^{\prime}$
so that $u_{j}$ is affine on each simplex of $\tau_j$. We apply the existence of $\WW^{1,1}$ unbounded minimizing sequences for (\ref{vp1})
for each $\Omega = X \in \tau_{j}$, $z_{0} = \nabla u_{j}|_{X}$ to find $\varphi_{j,X} \in \WW^{1,1}_{0}(X, \RN )$ so
$$
j\Leb (X) < \int_{X} \! | \nabla \varphi_{j,X}| \, \dd x \mbox{ and } \int_{X} \! F(\nabla u_{j}+\nabla \varphi_{j,X}) \, \dd x \leq 
\int_{X} \bigg( F(\nabla u_{j}) + \frac{1}{j} \biggr) \, \dd x.
$$
Defining $v_{j} := u_{j}+\sum_{X \in \tau_{j}} \varphi_{j,X}$, where we extend each $\varphi_{j,X}$ by $0 \in \RN$ off $X$, we have a
$\WW^{1,1}$ unbounded minimizing sequence for (\ref{vp1}), a contradiction that finishes the proof of Step 2.

\noindent
\emph{Step 3.} Conclusion from (\ref{meancoercive}). We may assume that $\Leb (X)=1$.
Now $E(z) \leq |z|$ for $z \in \M$, so if we take $\ell \in (0,\alpha )$, put $c=\alpha -\ell$
and $G=F-\ell E$, then (\ref{meancoercive}) yields
\begin{equation}\label{coer1}
\int_{X} \! G(\nabla \varphi ) \, \dd x \geq c\int_{X} \! | \nabla \varphi | \, \dd x + \beta
\end{equation}
for all $\varphi \in \WW^{1,1}_{0}(X,\RN )$. Recalling the Dacorogna formula for the quasiconvex envelope (see \cite{Dacorogna}
and the discussion in \cite{CK}) we take a sequence $( \varphi_j )$ in $\WW^{1,1}_{0}(X, \RN )$ so
$$
\int_{X} \! G(\nabla \varphi_j ) \, \dd x \to G^{\mathrm{qc}}(0),
$$
the quasiconvex envelope of $G$ at $0$. Obviously, $G^{\mathrm{qc}}(0) \geq \beta$, so $G^{\mathrm{qc}}$ is a real-valued quasiconvex integrand
satisfying $G^{\mathrm{qc}} \leq G$. Because $G$ has linear growth, so does $G^{\mathrm{qc}}$ (see \cite{CK}). The probability measures
$\nu_j$ on $\M$ defined for Borel sets $A \subset \M$ by
$$
\nu_j (A) := \Leb \biggl( X \cap (\nabla \varphi_{j})^{-1}(A) \biggr)
$$
all have centre of mass at $0$ and uniformly bounded first moments: 
$$
c\int_{\M} \! |z| \, \dd \nu_k + \beta \leq \sup_{j} \int_{X} \! G( \nabla \varphi_{j}) \, \dd x < \infty 
$$
for $k \in \N$. But then Banach-Alaoglu's theorem applied in $\CC_{0}( \M )^{\ast}$ yields a subsequence (not relabelled) and 
$\nu \in \CC_{0}( \M )^{\ast}$ such that $\nu_{j} \wstar \nu$ in $\CC_{0}( \M )^{\ast}$. It is not hard to see that $\nu$ must again
be a probability measure on $\M$, and
$$
\int_{\M} \! |z| \, \dd \nu \leq \liminf_{j \to \infty} \int_{\M} \! |z| \, \dd \nu_j < \infty .
$$
Since $G-G^{\mathrm{qc}} \geq 0$ is continuous we get by routine means that
$$
0\leq \int_{\M} \! \biggl( G-G^{\mathrm{qc}} \biggr) \, \dd \nu \leq \liminf_{j \to \infty} 
\int_{\M} \! \biggl( G-G^{\mathrm{qc}} \biggr) \, \dd \nu_{j}=0,
$$
and thus $G=G^{\mathrm{qc}}$ on the support of $\nu$. This completes the proof.
\end{proof}

\begin{remark}\label{cvgenergy}
It is not difficult to show that under assumptions $\mathrm{(H1)}$, $\mathrm{(H2)}$ we have a \emph{principle of convergence of energies}
in the sense that if $u_{j} \wstar u$ in $\BV$, $u_{j}|_{\partial \Omega} \to u|_{\partial \Omega}$ in $\LL^{1}(\partial \Omega , \RN )$ and
$$
\int_{\Omega} \! F(Du_{j}) \to \int_{\Omega} \! F(Du),
$$
then $u_j \to u$ in the area-strict sense in $\BV$. We do not give the details here and intend to return to this in a more general framework
elsewhere.
\end{remark}

\section{Proof of Theorem \ref{thm:main}}\label{sec:main}
We split the proof into five steps, each of which is presented in a subsection.

\subsection{Bounds for shifted integrands} 
For a $\CC^2$ integrand $F \colon \M \to \R$ we define for each $w \in \M$ the shifted integrand $F_{w} \colon \M \to \R$ by
\begin{eqnarray}\label{shifted}
F_{w}(z) &=& F(z+w)-F(w)-F^{\prime}(w)[z]\nonumber\\
&=& \int_{0}^{1} \! (1-t)F^{\prime \prime}(w+tz)[z,z] \, \dd t
\end{eqnarray}
We use the same notation for shifted versions $E_w$ of the reference integrand $E$, and record the
following elementary result for later reference.
\begin{lemma}\label{elemE}
For $w$, $z \in \M$ we have (with obvious interpretation for $w = 0$ or $z=0$)
\begin{equation}\label{elemE1}
E^{\prime \prime}(w)[z,z] = \frac{1+|w|^{2}-|w|^{2}\left( \tfrac{w}{|w|} \cdot \tfrac{z}{|z|} \right)^{2}}{\bigl( 1+|w|^{2} \bigr)^{\frac{3}{2}}}|z|^{2}
\end{equation}
and
\begin{equation}\label{elemE2}
E_{w}(z) \geq 2^{-4}\bigl( 1+|w|^{2} \bigr)^{-\frac{3}{2}}E(z).
\end{equation}
\end{lemma}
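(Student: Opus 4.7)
The plan is to first obtain (\ref{elemE1}) by direct differentiation, and then to deduce (\ref{elemE2}) from the integral representation in (\ref{shifted}) by bounding numerator and denominator of $E''(w+tz)[z,z]$ separately and integrating in $t$.

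For (\ref{elemE1}) I would simply differentiate $E(z)=\sqrt{1+|z|^{2}}-1$ twice. One gets
$$
\partial_{ij}^{2}E(w) = \frac{\delta_{ij}}{\sqrt{1+|w|^{2}}}-\frac{w_{i}w_{j}}{(1+|w|^{2})^{3/2}},
$$
and contracting with $z\otimes z$ and factoring $|z|^{2}/(1+|w|^{2})^{3/2}$ out immediately yields the stated formula. No subtlety here.

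For (\ref{elemE2}) the decisive observation comes from substituting $w\mapsto w+tz$ in (\ref{elemE1}) and expanding $((w+tz)\cdot z)^{2}=(w\cdot z+t|z|^{2})^{2}$: a short algebraic cancellation shows that the numerator of $E''(w+tz)[z,z]$ is in fact
$$
(1+|w+tz|^{2})|z|^{2}-((w+tz)\cdot z)^{2}\;=\;(1+|w|^{2})|z|^{2}-(w\cdot z)^{2},
$$
which is $t$-independent and, by Cauchy--Schwarz, bounded below by $|z|^{2}$. For the denominator, the elementary estimate $w\cdot z\leq |w||z|\leq |z|\sqrt{1+|w|^{2}}$ gives $1+|w+tz|^{2}\leq \bigl(\sqrt{1+|w|^{2}}+t|z|\bigr)^{2}$. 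Writing $a=\sqrt{1+|w|^{2}}$ and plugging these into the Taylor formula in (\ref{shifted}) I get
$$
E_{w}(z)\;\geq\; |z|^{2}\int_{0}^{1}\frac{1-t}{(a+t|z|)^{3}}\,\dd t.
$$
Evaluating this one-dimensional integral explicitly (substitution $s=a+t|z|$) produces the closed form
$$
E_{w}(z)\;\geq\;\frac{|z|^{2}}{2(1+|w|^{2})\bigl(\sqrt{1+|w|^{2}}+|z|\bigr)}.
$$

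To finish I compare the last display with $2^{-4}(1+|w|^{2})^{-3/2}E(z)$, which after cross-multiplication reduces to showing
$$
8|z|^{2}\sqrt{1+|w|^{2}}\;\geq\; E(z)\bigl(\sqrt{1+|w|^{2}}+|z|\bigr).
$$
Using the sharp upper bound $E(z)\leq\min\{|z|,|z|^{2}\}$ from (\ref{minb}), I split into three elementary cases: (i) $|z|\leq 1$, using $E(z)\leq |z|^{2}$ together with $\sqrt{1+|w|^{2}}+|z|\leq 2\sqrt{1+|w|^{2}}$; (ii) $|z|\geq 1$ and $|z|\leq \sqrt{1+|w|^{2}}$, again using $\sqrt{1+|w|^{2}}+|z|\leq 2\sqrt{1+|w|^{2}}$ and $E(z)\leq |z|$; (iii) $|z|\geq\sqrt{1+|w|^{2}}$, using $\sqrt{1+|w|^{2}}+|z|\leq 2|z|$ and $E(z)\leq |z|$. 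In each case the inequality reduces to a trivial numerical bound with plenty of slack, so the constant $2^{-4}$ in (\ref{elemE2}) is comfortably within reach. The only mildly nonobvious ingredient of the whole argument is the observation that the numerator of $E''(w+tz)[z,z]$ is constant in $t$; everything else is straightforward calculus.
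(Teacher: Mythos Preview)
Your proof is correct; the paper in fact omits the proof entirely, remarking only that ``the proof of this result is straightforward and is omitted here.'' Your argument---the direct Hessian computation for (\ref{elemE1}), the observation that the numerator $(1+|w+tz|^{2})|z|^{2}-((w+tz)\cdot z)^{2}$ collapses to the $t$-independent quantity $(1+|w|^{2})|z|^{2}-(w\cdot z)^{2}\geq |z|^{2}$, the explicit evaluation of the resulting one-dimensional integral, and the final case split using $E(z)\leq\min\{|z|,|z|^{2}\}$---is a clean and fully rigorous way to obtain the constant $2^{-4}$ in (\ref{elemE2}).
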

The proof of this result is straightforward and is omitted here.
Next, we record the following elementary properties that $F_w$ inherits from $F$:
\begin{lemma}\label{propshift}
Suppose $F \colon \M \to \R$ satisfies $\mathrm{(H0)}$, $\mathrm{(H1)}$, $\mathrm{(H2)}$.
For each $m>0$ there exists a constant $c=c(m) \in [1,\infty )$ with the following properties.
Fix $w \in \M$ with $|w| \leq m$. Then
\begin{equation}\label{sbound1}
|F_{w}(z)| \leq cLE(z) , \quad \quad |F^{\prime}_{w}(z)| \leq cL \min \{ |z| ,1 \} , 
\end{equation}
\begin{equation}\label{sbound2}
|F^{\prime \prime}_{w}(0)z-F^{\prime}_{w}(z)| \leq cLE(z)
\end{equation}
holds for all $z \in \M$, 
\begin{equation}\label{sqc}
\int_{B} \! F_{w}(\nabla \varphi (x)) \, \dd x \geq \tfrac{\ell}{c}\int_{B} \! E( \nabla \varphi (x)) \, \dd x
\end{equation}
holds for all $\varphi \in \WW^{1,1}_{0}(B, \RN )$ and
\begin{equation}\label{src}
F^{\prime \prime}(w)[y \otimes x, y \otimes x] \geq \tfrac{\ell}{c}|y|^{2}|x|^{2}
\end{equation}
holds for all $x \in \Rn$, $y \in \RN$.
\end{lemma}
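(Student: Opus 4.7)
The plan is to exploit that $F_{w}$ is a second-order Taylor remainder of $F$ at $w$, so the strong quasiconvexity/ellipticity estimates (\ref{sqc}) and (\ref{src}) can be derived directly from hypothesis $\mathrm{(H2)}$, while the pointwise bounds (\ref{sbound1}) and (\ref{sbound2}) come from combining Taylor expansion with $\mathrm{(H0)}$, $\mathrm{(H1)}$ and the universal Lipschitz bound (\ref{lip}) available for quasiconvex integrands of linear growth. For (\ref{src}), quasiconvexity of $F-\ell E$ implies its rank-one convexity, so $F''(w)[y\otimes x,y\otimes x]\geq \ell\, E''(w)[y\otimes x,y\otimes x]$. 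Formula (\ref{elemE1}) (whose numerator is bounded below by $1$) then gives the lower bound $\ell(1+|w|^{2})^{-3/2}|y|^{2}|x|^{2}$, which for $|w|\leq m$ yields (\ref{src}) with $c=(1+m^{2})^{3/2}$. For (\ref{sqc}) I would test the quasiconvexity of $F-\ell E$ at $w$ against the perturbation $\varphi$, via Lemma \ref{1qc} applied to the affine map $a(x)=wx$ (or equivalently by rescaling Definition \ref{defqc} to the ball $B$):
\[
\int_{B}\!\bigl[F(w+\nabla\varphi)-\ell E(w+\nabla\varphi)\bigr]\,\dd x \;\geq\; \Leb(B)\bigl[F(w)-\ell E(w)\bigr].
\]
Since $\int_{B}\nabla\varphi\,\dd x=0$, the affine terms $F'(w)[\nabla\varphi]$ and $\ell E'(w)[\nabla\varphi]$ have zero integral and may be subtracted from both sides, giving $\int_{B}F_{w}(\nabla\varphi)\,\dd x\geq \ell\int_{B}E_{w}(\nabla\varphi)\,\dd x$. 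The pointwise bound (\ref{elemE2}) then converts $E_{w}$ to $E$ at the cost of a factor $16(1+m^{2})^{3/2}$, yielding (\ref{sqc}).

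The pointwise estimates (\ref{sbound1}) and (\ref{sbound2}) I would prove by splitting into the regimes $|z|\leq 1$ and $|z|\geq 1$. In the small-$z$ regime I use the integral representations
\[
F_{w}(z)=\int_{0}^{1}(1-t)F''(w+tz)[z,z]\,\dd t,\qquad F_{w}'(z)=\int_{0}^{1}F''(w+tz)\,z\,\dd t,
\]
\[
F_{w}''(0)z-F_{w}'(z)=\int_{0}^{1}\bigl[F''(w)-F''(w+tz)\bigr]\,z\,\dd t,
\]
which, together with $\mathrm{(H0)}$ (giving local boundedness of $F''$ and its Lipschitz continuity on $\{|u|\leq m+1\}$), yield $|F_{w}(z)|,\,|F_{w}''(0)z-F_{w}'(z)|\leq c|z|^{2}$ and $|F_{w}'(z)|\leq c|z|$; the estimate $E(z)\geq (\sqrt{2}-1)|z|^{2}$ for $|z|\leq 1$ from (\ref{minb}) then matches the required right-hand sides, with $c=c(m)$ absorbing the local $\CC^{2,1}$ norm of $F$. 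In the large-$z$ regime I estimate term-by-term: $\mathrm{(H1)}$ gives $|F(z+w)|+|F(w)|\leq L(|z|+m+1)+L(m+1)$ and (\ref{lip}) gives $|F'(w)|\leq c_{0}L$, hence $|F_{w}(z)|\leq c(m)L|z|$ and $|F_{w}'(z)|=|F'(z+w)-F'(w)|\leq 2c_{0}L$. A similar triangle-inequality estimate, using the local bound on $|F''(w)|$ for the linearisation term, handles (\ref{sbound2}). The bound $E(z)\geq (\sqrt{2}-1)|z|$ for $|z|\geq 1$ from (\ref{minb}) converts all three estimates to the forms in the statement.

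The only real obstacle is bookkeeping: arranging constants in the precise forms $cL$ and $\ell/c$ requires silently absorbing local $\CC^{2,1}$ norms of $F$ into the $m$-dependence of $c$, which is standard in this literature given $\mathrm{(H0)}$.
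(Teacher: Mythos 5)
Your proposal is correct and follows essentially the same route as the paper: rank-one convexity from $\mathrm{(H2)}$ plus (\ref{elemE1}) for (\ref{src}), the shifted quasiconvexity inequality plus (\ref{elemE2}) for (\ref{sqc}), and a case split $|z|\leq 1$ versus $|z|\geq 1$ using the integral representations, $\mathrm{(H0)}$, (\ref{lip}) and (\ref{minb}) for (\ref{sbound1})--(\ref{sbound2}). The bookkeeping caveat you flag (constants absorbing local norms of $F''$) is present in the paper's own argument as well, so nothing is missing.
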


\begin{proof}
For the bounds (\ref{sbound1}) and (\ref{sbound2}) we distinguish the cases $|z| \leq 1$ and $|z| > 1$.
The bounds in (\ref{sbound1}) follow then easily from the definition of $F_w$ and (\ref{lip}).
We leave the details of this to the reader, and instead focus on (\ref{sbound2}). Here we have for $|z| \leq 1$ that 
\begin{eqnarray*}
|F^{\prime \prime}_{w}(0)z-F^{\prime}_{w}(z)| &\leq& \int_{0}^{1} \! \bigl| 
F^{\prime \prime}(w)-F^{\prime \prime}(w+tz) \bigr| \, \dd t |z|\\
&\leq& \lip (F^{\prime \prime},B_{m+1}(0))|z|^{2}
\end{eqnarray*}
and the latter is finite for each fixed $m$ by hypothesis (H0). Next, for $|z| > 1$ we use (\ref{lip}) to estimate:
\begin{eqnarray*}
|F^{\prime \prime}_{w}(0)z-F^{\prime}_{w}(z)| &\leq& |F^{\prime \prime}(w)||z|+cL\\
&\leq& \bigl( \sup_{|v| \leq m}|F^{\prime \prime}(v)| +cL \bigr)|z| ,
\end{eqnarray*}
and so we deduce (\ref{sbound2}) from (\ref{minb}). Finally we turn to the quasiconvexity condition (\ref{sqc}). From
(H2) we get
$$
\int_{B} \! F_{w}(\nabla \varphi ) \, \dd x \geq \ell \int_{B} \! E_{w}(\nabla \varphi ) \, \dd x
$$
and so from (\ref{elemE2}) we get (\ref{sqc}) with $c=2^{4}(1+m^{2})^{\frac{3}{2}}$. Finally, since quasiconvexity implies 
rank one convexity, (H2) yields in particular that
\begin{eqnarray*}
F^{\prime \prime}(w)[y \otimes x, y \otimes x] &\geq& \ell E^{\prime \prime}(w)[y \otimes x, y \otimes x]\\
&\stackrel{(\ref{elemE1})}{\geq}& \frac{\ell}{(1+m^{2})^{\frac{3}{2}}}|y|^{2}|x|^{2} ,
\end{eqnarray*}
which of course implies (\ref{src}). 
\end{proof}

\subsection{Caccioppoli inequality of the second kind}\label{caccioppoli2}
This is an important part of the proof, and in fact it is the only place where both the quasiconvexity and minimality 
assumptions are used. However, the proof in the considered linear growth case does not differ much from the usual ones
as it follows that given by Evans in \cite{Evans} and relies crucially on Widman's hole filling trick \cite{Widman}.
\begin{proposition}\label{caccioppoli}
Suppose $F \colon \M \to \R$ satisfies $\mathrm{(H0)}$, $\mathrm{(H1)}$, $\mathrm{(H2)}$ and that $u \in \BV (\Omega , \RN )$
is a minimizer. Then each $m>0$ there exists a constant $c=c(m,n,N,\tfrac{L}{\ell}) \in [1,\infty )$ with the following property.
Let $a \colon \Rn \to \RN$ be an affine mapping with $| \nabla a| \leq m$ and $B_{R}(x_{0}) \subset \Omega$.
Then we have
\begin{equation}\label{caccio}
\int_{B_{\frac{R}{2}}(x_{0})} \! E(D(u-a)) \leq c\int_{B_{R}(x_{0})} \! E \left( \frac{u-a}{R} \right) \, \dd x.
\end{equation}
\end{proposition}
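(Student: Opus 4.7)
Work with the shifted integrand $G(z) := F_{\nabla a}(z)$; by Lemma \ref{propshift}, $G$ is strongly quasiconvex with constant $\ell/c$, satisfies $|G(z)|\leq cLE(z)$, and obeys $G(0)=0$, $G^{\prime}(0)=0$. A short computation that exploits $D\varphi(B_R)=0$ for BV perturbations $\varphi$ compactly supported in $B_R$ shows that the minimality of $u$ transfers to $w := u-a$ minimizing $w^{\prime}\mapsto\int G(Dw^{\prime})$ against such BV competitors $w^{\prime}$. Moreover, by the area-strict approximation of Lemma \ref{Eapprox} (preserving boundary values) together with the continuity of rank-one convex linear-growth integrands along area-strict convergence (Lemma \ref{Econt}), the strong quasiconvexity inequality (\ref{sqc}) extends from $\WW^{1,1}_{0}$ to arbitrary test functions in $\BV_{0}$.

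Fix concentric radii $R/2 \leq r < s \leq R$ and a cutoff $\eta\in\CC^{\infty}_c(B_s)$ with $\eta\equiv 1$ on $B_r$ and $|\nabla\eta|\leq 2/(s-r)$, and set $\varphi := \eta w \in \BV_{0}(B_s,\RN)$ and $\psi := (1-\eta)w$, so that $D\varphi+D\psi=Dw$ as measures on $B_s$. Then $u-\varphi$ is an admissible competitor, and minimality gives $\int_{B_s}G(Dw)\leq\int_{B_s}G(D\psi)$; meanwhile the extended SQC inequality applied to $\varphi$, together with the identity $D\varphi=Dw$ on $B_r$, yields $(\ell/c)\int_{B_r}E(Dw)\leq\int_{B_s}G(D\varphi)$. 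Combining these produces
\begin{equation*}
\frac{\ell}{c}\int_{B_r} E(Dw) \leq \int_{B_s}\bigl[G(D\varphi) + G(D\psi) - G(Dw)\bigr].
\end{equation*}

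The central step is to bound the right-hand side. On $B_r$ one has $D\varphi=Dw$ and $D\psi=0$, so the integrand vanishes. The singular parts of $D\varphi$ and $D\psi$ are $\eta D^s w$ and $(1-\eta)D^s w$ with $\eta,\,1-\eta\in[0,1]$, hence positive $1$-homogeneity of $G^{\infty}$ makes the singular contributions cancel exactly. Only the absolutely continuous part on the annulus $B_s\setminus B_r$ remains; using $|G(z)|\leq cLE(z)$ from (\ref{sbound1}), the subadditivity and scaling of $E$ in (\ref{minb}), and $|\nabla\eta|\leq 2/(s-r)$, one obtains
\begin{equation*}
\int_{B_r} E(Dw) \leq c_1\int_{B_s \setminus B_r}E(Dw) + c_2\int_{B_s}E\!\left(\tfrac{w}{s-r}\right) dx,
\end{equation*}
with constants $c_i=c_i(m,n,N,L/\ell)$. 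Widman's hole-filling trick (adding $c_1\int_{B_r}E(Dw)$ to both sides) then produces
\begin{equation*}
\int_{B_r} E(Dw) \leq \theta \int_{B_s} E(Dw) + c_2\int_{B_s}E\!\left(\tfrac{w}{s-r}\right) dx, \qquad \theta := \tfrac{c_1}{1+c_1}<1,
\end{equation*}
and the scaling bound $E(\lambda z)\leq\lambda^2 E(z)$ for $\lambda\geq 1$ from (\ref{minb}) replaces $E(w/(s-r))$ by $(R/(s-r))^2 E(w/R)$. The standard Giaquinta--Giusti iteration lemma applied on $[R/2,R]$ now absorbs the annular term and delivers (\ref{caccio}).

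The main technical obstacle in lifting Evans's $\WW^{1,p}$ Caccioppoli argument to the present linear-growth BV setting is precisely the handling of the measure $G(D\varphi)+G(D\psi)-G(Dw)$: the exact cancellation of its singular part, forced by the positive $1$-homogeneity of $G^{\infty}$ together with the constraint $\eta,\,1-\eta\in[0,1]$, is the essential structural feature; without it the annular error would contain an untamable singular-measure contribution and the hole-filling step would not close.
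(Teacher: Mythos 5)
Your proof is correct and follows essentially the same route as the paper: shifted integrand, cutoff decomposition $\varphi=\eta w$, $\psi=(1-\eta)w$, extension of the strong quasiconvexity inequality to $\BV_{0}$ competitors by (area-)strict approximation, minimality against $\psi$, hole-filling, and the iteration lemma. The only divergence is cosmetic: you cancel the singular parts of $G(D\varphi)+G(D\psi)-G(Dw)$ exactly via the positive $1$-homogeneity of $G^{\infty}$, whereas the paper simply bounds each of the three singular contributions on the annulus by a multiple of $|D^{s}w|$ (hence by $E(Dw)$ restricted to $B_{s}\setminus B_{r}$) and absorbs them in the hole-filling step --- so your closing claim that the argument could not close without this exact cancellation overstates its role.
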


\begin{proof}
Denote $\tF = F_{\nabla a}$ and $\tu = u-a$. Observe that $\tu$ is minimizing the integral functional
corresponding to the shifted integrand $\tF$. Fix two radii $\tfrac{R}{2} < r < s < R$, and let $\rho \colon \Omega \to \R$
be a Lipschitz cut-off function satisfying $\mathbf{1}_{B_{r}} \leq \rho \leq \mathbf{1}_{B_{s}}$ and 
$| \nabla \rho | \leq \frac{1}{s-r}$. Put $\varphi = \rho \tu$ and $\psi = (1-\rho )\tu$. For a standard smooth mollifier
$( \phi_{\varepsilon})$ we let $\varphi_{\varepsilon} = \rho (\phi_{\varepsilon} \ast \tu )$, so that 
$\varphi_{\varepsilon} \in \WW^{1,1}_{0}(B_{s},\RN )$. Hence by the consequence (\ref{sqc}) of the quasiconvexity assumption (H2) we get
$$
\frac{\ell}{c}\int_{B_{s}} \! E(\nabla \varphi_{\varepsilon}) \, \dd x \leq 
\int_{B_{s}} \! \tF (\nabla \varphi_{\varepsilon}) \, \dd x .
$$
Observe that as $\varepsilon \searrow 0$, $\varphi_{\varepsilon} \to \varphi$ in $\LL^{1}$ and (since $\rho = 0$ on $\partial B_s$) that
$$
\int_{B_{s}} \! E(\nabla \varphi_{\varepsilon}) \, \dd x \to  \int_{B_{s}} \! E(D\varphi ).
$$
We can therefore employ Lemma \ref{Econt} to find, by taking $\varepsilon \searrow 0$ in the above inequality, 
$$
\frac{\ell}{c}\int_{B_{s}} \! E(D\varphi ) \leq \int_{B_{s}} \! \tF (D\varphi ).
$$
Consequently, we have using minimality of $\tu$, (\ref{sbound1}), convexity of $E$ and (\ref{minb}):
\begin{eqnarray*}
\frac{\ell}{c}\int_{B_{r}} \! E(D\tu ) &\leq& \int_{B_{s}} \! \tF (D\tu ) + \int_{B_{s}} \! \tF (D\varphi ) - \int_{B_{s}} \! \tF (D\tu )\\
&\leq& \int_{B_{s}} \! \tF (D\psi ) + \int_{B_{s}} \! \tF (D\varphi ) - \int_{B_{s}} \! \tF (D\tu )\\
&\leq& cL\int_{B_{s}\setminus B_{r}} \! E(D\tu ) + cL\int_{B_{s}\setminus B_{r}} \! E(\rho D\tu + \tu \otimes \nabla \rho )\\  
&& +cL\int_{B_{s}\setminus B_{r}} \! E((1-\rho )D\tu - \tu \otimes \nabla \rho )\\
&\leq& 5cL\int_{B_{s}\setminus B_{r}} \! E(D\tu ) +4cL\int_{B_s} \! E \left( \frac{\tu}{s-r} \right) \, \dd x.
\end{eqnarray*}
We fill the hole whereby on denoting $\theta = 5cL/(5cL + \tfrac{\ell}{c}) \in (0,1)$ we arrive at
\begin{eqnarray*}
\int_{B_r} \! E(D \tu ) &\leq& \theta \int_{B_s} \! E(D \tu ) + \theta 
\int_{B_s} \! E\left( \frac{\tu}{s-r} \right) \, \dd x\\
&\leq&  \theta \int_{B_s} \! E(D \tu ) + \theta 
\int_{B_R} \! E\left( \frac{\tu}{s-r} \right) \, \dd x.
\end{eqnarray*}
The conclusion now follows in a standard way from the iteration Lemma \ref{iterate} below.
\end{proof}

\begin{lemma}\label{iterate}
Let $\theta \in (0,1)$, $A \geq 1$ and $R>0$. Assume that $\Phi$, $\Psi \colon (0,R] \to \R$ are
nonnegative functions, that $\Phi$ is bounded, $\Psi$ is decreasing with $\Psi (\tfrac{t}{2}) \leq A\Psi (t)$ for
all $t \in (0,R]$ and that
\begin{equation}\label{ineqiterate}
\Phi (r) \leq \theta \Phi (s) + \Psi (s-r)
\end{equation}
holds for all $r$, $s \in [\tfrac{R}{2},R]$ with $r<s$. Then there exists a constant $C=C(\theta , A) >0$
such that
\begin{equation}\label{iterated}
\Phi \left( \tfrac{R}{2} \right) \leq C\Psi (R).
\end{equation}
\end{lemma}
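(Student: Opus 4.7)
The plan is the standard one-sided geometric iteration, with the only care required being the choice of the rate so that the doubling-type growth of $\Psi$ is compensated by $\theta<1$.

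First, choose $\lambda\in(0,1)$ close enough to $1$ that $\theta\,A^{\log_{2}(1/\lambda)}<1$; this is possible because $A^{\log_{2}(1/\lambda)}\to 1$ as $\lambda\nearrow 1$ and $\theta<1$. Write $\mu=A^{\log_{2}(1/\lambda)}$, so $\theta\mu<1$. Then define an increasing sequence $(r_{k})\subset[R/2,R]$ by $r_{0}=R/2$ and $r_{k+1}=r_{k}+(R/2)(1-\lambda)\lambda^{k}$. By construction $r_{k}\nearrow R$, and each $r_{k}<R$, so the pair $(r,s)=(r_{k},r_{k+1})$ is admissible in $(\ref{ineqiterate})$, giving
\begin{equation*}
\Phi(r_{k})\leq\theta\,\Phi(r_{k+1})+\Psi\!\bigl((R/2)(1-\lambda)\lambda^{k}\bigr).
\end{equation*}

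Next, I would control the $\Psi$-term by iterating the hypothesis $\Psi(t/2)\leq A\Psi(t)$. Since $\Psi$ is decreasing and $(R/2)(1-\lambda)\lambda^{k}\geq R\cdot 2^{-(k\log_{2}(1/\lambda)+\log_{2}(1/(1-\lambda))+1)}$, the doubling bound applied finitely many times yields a constant $B_{\lambda}=B_{\lambda}(A,\lambda)$ with
\begin{equation*}
\Psi\!\bigl((R/2)(1-\lambda)\lambda^{k}\bigr)\leq B_{\lambda}\,\mu^{k}\,\Psi(R).
\end{equation*}

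Finally, iterating the one-step inequality $k$ times yields
\begin{equation*}
\Phi(R/2)=\Phi(r_{0})\leq\theta^{k}\Phi(r_{k})+B_{\lambda}\Psi(R)\sum_{j=0}^{k-1}(\theta\mu)^{j}.
\end{equation*}
The hypothesis that $\Phi$ is bounded makes the first term vanish as $k\to\infty$ (because $\theta<1$), and $\theta\mu<1$ bounds the geometric sum by $1/(1-\theta\mu)$. Letting $k\to\infty$ gives $(\ref{iterated})$ with $C=B_{\lambda}/(1-\theta\mu)$, which depends only on $\theta$ and $A$. There is no real obstacle: the only nontrivial choice is picking $\lambda$ close enough to $1$ so that the exponential growth rate $\mu$ inherited from the doubling condition remains below $1/\theta$, which is exactly what lets the geometric series converge.
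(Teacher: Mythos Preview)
Your proof is correct and follows the standard iteration referenced by the paper (which defers to \cite[Lemma 6.1]{Giusti} rather than giving its own argument): one introduces geometrically spaced radii in $[R/2,R]$, iterates \eqref{ineqiterate}, and sums a geometric series. The only adaptation needed for the present statement over Giusti's version is the handling of a general $\Psi$ with a doubling condition rather than a specific power, and you dispatch this correctly via the choice of $\lambda$ ensuring $\theta\mu<1$.
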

The proof follows closely that of, for instance, \cite[Lemma 6.1]{Giusti} and so we leave the details to the reader.
\bigskip

\noindent
As mentioned in the Introduction it is not possible to use the Poincar\'{e}-Sobolev inequality to get a reverse H\"{o}lder
inequality from which higher integrability can be deduce by use of Gehring's Lemma. However, the Caccioppoli inequality
(\ref{caccio}) still encodes some compactness as can be seen from the following remark that is stated in terms of
Young measures and where we use the terminology from \cite{KR2}. The reader can get a good overview of this general
formalism and other developments in the calculus of variations context in the recent monograph \cite{FR}.

\begin{remark}\label{caccpt}
Let $(u_j )$ be a sequence in $\BV ( \Omega , \RN )$ satisfying the Caccioppoli inequality (\ref{caccio}) above uniformly:
for each $m > 0$ there exists a constant $c_m$ (independent of $j$) such that for any affine map $a \colon \Rn \to \RN$
with $| \nabla a| \leq m$ and any ball $B_{R}=B_{R}(x_0 ) \subset \Omega$ we have
\begin{equation}\label{caccj}
\int_{B_{\frac{R}{2}}(x_{0})} \! E(D(u_{j}-a)) \leq c_m \int_{B_{R}(x_{0})} \! E \left( \frac{u_{j}-a}{R} \right) \, \dd x.
\end{equation}
If $(u_j )$ is bounded in $\BV ( \Omega , \RN )$, then (any subsequence) admits a subsequence (not relabelled) that
converges weakly$\mbox{}^\ast$ in $\BV$ to a map $u \in \BV ( \Omega , \RN )$ and whose derivatives $Du_j$ generate
a Young measure $\nu = \bigl( ( \nu_x )_{x \in \Omega} , \lambda , ( \nu^{\infty}_{x} )_{x \in \overline{\Omega}} \bigr)$.
The compactness encoded in (\ref{caccj}) amounts to
$$
\nu_{x} = \delta_{\nabla u(x)} \quad \Leb\mbox{-a.e.} \quad \mbox{ and } \quad |D^{s}u| \leq \lambda \lfloor \Omega \leq c|D^{s}u| ,
$$
where $c=c(n)c_0$.
\end{remark}

\begin{proof}
The existence of the subsequence with the asserted properties follows from \cite[Theorem 8]{KR2}. Thus we have
for some subsequence (not relabelled), $u \in \BV ( \Omega , \RN )$ and Young measure $\nu$ that
$$
u_j \wstar u \mbox{ in } \BV \, \mbox{ and } \, Du_j \toY \nu.
$$
By a result of Calder\'{o}n and Zygmund \cite[Theorem 3.83]{AFP}, $u$ is approximately differentiable $\Leb$ almost everywhere.
Let $x_0 \in \Omega$ be such a point and take $a(x) = u(x_{0})+\nabla u(x_{0})(x-x_{0})$. With $m=| \nabla u(x_{0})|$
we get from (\ref{caccj}) on a ball $B_{2r}=B_{2r}(x_{0}) \subset \Omega$ after taking $j \nearrow \infty$:
$$
\int_{B_{r}} \! \int_{\M} \! E \bigl( \cdot - \nabla u(x_{0}) \bigr) \, \dd \nu_{x} \, \dd x + \lambda (B_{r}) \leq 
c_{m} \int_{B_{2r}} \! E \left( \frac{u-a}{r} \right) \, \dd x.
$$
Divide by $\Leb (B_r )$ and take $r \searrow 0$ to get by Lebesgue's differentiation theorem
$$
\int_{\M} \! E  \bigl( \cdot - \nabla u(x_{0}) \bigr) \, \dd \nu_{x_0} + \frac{\dd \lambda}{\dd \Leb}(x_{0}) \leq 0
$$
for $\Leb$ almost all such $x_0$. But then both terms on the left-hand side must be $0$, and so, using the strict convexity 
of $E$ for the first term, we conclude that
$$
\nu_{x_{0}} = \delta_{\nabla u(x_{0})} \, \mbox{ for $\Leb$-a.e. } x_0  \, \mbox{ and } \, \lambda \perp \Leb .
$$
We always have $|D^{s}u| \leq \lambda \lfloor \Omega$ (see for instance \cite{KR2}). For the upper bound we fix an arbitrary ball 
$B_{2r} \subset \Omega$ take $a=u_{B_{2r}}$ above and pass to the limit whereby
$$
\int_{B_{r}} \! E \bigl( \nabla u \bigr) \, \dd x + \lambda ( B_{r}) \leq c_{0}\int_{B_{2r}} \! E \left( \frac{u-u_{B_{2r}}}{r} \right) \, \dd x
$$
results. Using that $E(z) \leq |z|$ for all $z$ and Poincar\'{e}'s inequality on the right-hand side we get
$$
\int_{B_{r}} \! E \bigl( \nabla u \bigr) \, \dd x + \lambda ( B_{r}) \leq c_{0}c |Du|(B_{2r}).
$$
Put $\mu = c_{0}c|Du|$; then $\lambda (B) \leq \mu (2B)$ for any ball $B$ for which
$2B \subset \Omega$. We reformulate this bound in terms of cubes as follows. For a closed ball $\overline{B}$ we let $Q$ denote the largest 
closed cube with sides parallel to the coordinate axes that is contained in $\overline{B}$ and we let $\hat{Q}$ denote the smallest such cube that
contains $s\overline{B}$ for some fixed $s>2$. Then $Q$ and $\hat{Q}$ are concentric and the sidelengths satisfy $\ell (\hat{Q}) = s\sqrt{n}\ell (Q)$. 
Clearly given a cube $Q$ with sides parallel to the coordinate axes, the cube $\hat{Q}$ just described is uniquely determined and we must in particular
have $\lambda (Q) \leq \mu ( \hat{Q})$ for all cubes $Q$ with $\hat{Q} \subset \Omega$. Now fix a closed cube $Q \subset \Omega$ and
consider the system of its $Q$-dyadic subcubes at level $k \in \N$: 
$$
Q = \bigcup_{j=1}^{2^{k}} Q^{(k)}_{j}.
$$
For $k$ sufficiently large we have that each $\hat{Q}^{(k)}_{j} \subset \Omega$ since $\ell (\hat{Q}^{(k)}_{j}) = s\sqrt{n}2^{-k}\ell (Q)$, 
$Q^{(k)}_{j} \subset \hat{Q}^{(k)}_{j} \cap Q \Subset \Omega$, and so $\lambda (Q^{(k)}_{j}) \leq \mu (\hat{Q}^{(k)}_{j})$, hence
$$
\lambda (Q) \leq \sum_{j=1}^{2^k} \mu (\hat{Q}^{(k)}_{j}) = \int \! \sum_{j=1}^{2^k} \mathbf{1}_{\hat{Q}^{(k)}_{j}} \, \dd \mu \leq c(n,s) \mu (Q_k ),
$$
where $Q_k = \bigcup_j \hat{Q}^{(k)}_{j}$ and we used that the family of cubes satisfies a uniform bounded overlap property. Taking $k \nearrow \infty$
we arrive at $\lambda (Q) \leq c(n,s) \mu (Q )$. Now since the cube $Q \subset \Omega$ was arbitrary and $\lambda$ is singular the proof is complete.
\end{proof}

\subsection{Approximation by harmonic maps}\label{approxharm}

We turn to the announced approximation by harmonic maps. This step, where the minimizer is compared with 
the solution to a suitably linearized problem, is standard fare in partial regularity proofs and goes back to the works 
\cite{Almgren1,Almgren2,DeGiorgi1}. However, due to the $\LL^1$ set-up the usual ways of implementing this linearization 
(such as for instance \cite{AcerbiFusco1,AcerbiFusco2,CarozzaFuscoMingione,DGK,
DLSV,Giusti,Hamburger}) do seem to require modification. Fortunately, our variant is quite straightforward and proceeds 
by explicit construction of a test map that yields the required estimate. In fact, we believe that when this construction is
applied in the cases covered previously in the literature, it also offers a useful alternative argument there.

Because the approximation result is achieved by a linearization argument it is more natural if we also replace the key assumptions 
(H2) and minimality by their corresponding linearizations. More precisely, we shall replace the quasiconvexity hypothesis (H2) on the 
integrand $F$ by its linearization, namely the corresponding weaker rank-one convexity hypothesis:
$$
(\mathrm{H2W}) \hspace{1cm} z \mapsto F(z)-\ell E(z) \, \mbox{ is rank-one convex}.
$$
Instead of assuming that $u$ is a minimizer, we assume that $u \in \BV (\Omega , \RN )$ satisfies the extremality condition (\ref{extrem}).
We then have the following:

\begin{proposition}\label{approxharmprop}
Let $F \colon \M \to \R$ satisfy $\mathrm{(H0)}$, $\mathrm{(H1)}$ and $\mathrm{(H2W)}$ and assume that $u \in \BV (\Omega , \RN )$ 
satisfies (\ref{extrem}). Fix a number $m>0$. For any affine map $a \colon \Rn \to \RN$ with $| \nabla a| \leq m$ and each ball
$B=B_{R}(x_{0}) \subset \Omega$ so that $u|_{\partial B} \in \BV (\partial B , \RN )$ and $|Du|( \partial B)=0$ the elliptic system
\begin{equation}\label{defh}
\left\{
\begin{array}{ll}
-\mathrm{div} F^{\prime \prime}(\nabla a)\nabla h = 0 & \mbox{ in } B\\
h=u|_{\partial B} & \mbox{ on } \partial B,
\end{array}
\right.
\end{equation}
admits a unique solution $h \in \WW^{1,1}(B, \RN )$. This solution $h$ satisfies
\begin{equation}\label{boundharm}
\left( \dashint_{B} \! | \nabla h-\nabla a|^{p} \, \dd x \right)^{\frac{1}{p}} \leq c \dashint_{\partial B} \! |D_{\tau}(u-a) |
\end{equation}
for exponents $p \in (1,2)$ when $n=2$ and $p \in (1,\tfrac{n}{n-1}]$ when $n \geq 3$ and a corresponding constant $c=c(n,N,m,p,\tfrac{L}{\ell})$. 
Moreover, for each exponent $q \in (1,\tfrac{n}{n-1})$,
\begin{equation}\label{keyapprox}
\dashint_{B} \! E \left( \frac{u-h}{R} \right) \, \dd x \leq C\left( \dashint_{B} \! E \bigl( D(u-a) \bigr) \right)^{q}, 
\end{equation}
where $C=C(m,n,N,q,L,\ell )$.
\end{proposition}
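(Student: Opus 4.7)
The plan is to obtain existence, uniqueness, and the Sobolev bound (\ref{boundharm}) from Proposition \ref{exH}(a), and to derive the key approximation (\ref{keyapprox}) by an $\LL^{q}$-duality argument against Proposition \ref{exH}(b) combined with the shifted linearization bound (\ref{sbound2}).

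For the first part, observe that $\A := F^{\prime\prime}(\nabla a)$ satisfies the Legendre--Hadamard condition (\ref{sLH}) with ellipticity constant $\alpha = \ell/c(m)$ coming from (\ref{src}) in Lemma \ref{propshift} (which uses only rank-one convexity and hence (H2W)), and with $|\A| \leq \beta := \sup_{|z| \leq m}|F^{\prime\prime}(z)| < \infty$ by (H0). The boundary trace $(u-a)|_{\partial B}$ belongs to $\BV(\partial B, \RN)$, so Lemma \ref{bvembedding} (for $n \geq 3$) or Lemma \ref{bvembedding2} (for $n = 2$) puts it, modulo an additive constant that does not affect the gradient of the solution, in the Sobolev--Slobodecki\u{\i} space $\WW^{1-1/p,p}(\partial B, \RN)$ with $p = n/(n-1)$ (respectively, any $p \in (1,2)$). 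Rescaling to $\BB$ and applying Proposition \ref{exH}(a) to $h - a$, which solves the same homogeneous system, produces a unique $h \in \WW^{1,p}(B, \RN)$ together with (\ref{boundharm}) for the extremal $p$; H\"{o}lder's inequality then extends it to all smaller exponents.

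For (\ref{keyapprox}), fix $q \in (1, n/(n-1))$ and set $q^{\ast} := q/(q-1) > n$. For any $g \in \LL^{q^{\ast}}(B, \RN)$ with $\|g\|_{\LL^{q^{\ast}}} \leq 1$, Proposition \ref{exH}(b) (rescaled to $B$) produces a unique $\phi \in \WW^{2,q^{\ast}}_{0}(B, \RN)$ with $-\di\, \A \nabla \phi = g$ and $\phi|_{\partial B} = 0$, and, since $q^{\ast} > n$, Morrey's embedding combined with the scaling $x \mapsto x/R$ yields $\|\nabla \phi \|_{\LL^{\infty}(B)} \leq c R^{1-n/q^{\ast}} \|g\|_{\LL^{q^{\ast}}}$. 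Set $v := u - h$; because $h$ shares its trace with $u$ on $\partial B$ and $|Du|(\partial B) = 0$, the zero extension of $v$ lies in $\BV_{0}(\Omega, \RN)$ with $D^{s}v = D^{s}u \lfloor B$. Integration by parts against $\phi$ then gives
\begin{equation*}
\int_{B} \! g \cdot v \, \dd x = \int_{B} \! \A(\nabla u - \nabla h) \cdot \nabla \phi \, \dd x + \int_{B} \! \A \nabla \phi \cdot \dd D^{s}u.
\end{equation*}
The $\nabla h$-contribution vanishes because $\phi \in \WW^{1,\infty}_{0}$ is admissible in the $\WW^{1,p}$ formulation of (\ref{defh}) by density. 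The extremality (\ref{extrem}) applied to $\phi$ (which has vanishing singular derivative) yields $\int_{B} F^{\prime}(\nabla u) \cdot \nabla \phi \, \dd x = 0$; since also $\int_{B} F^{\prime}(\nabla a) \cdot \nabla \phi \, \dd x = 0$ and $\int_{B} F^{\prime \prime}(\nabla a) \nabla a \cdot \nabla \phi \, \dd x = 0$ (constant matrices), the decomposition $\A \nabla u = [\A(\nabla u - \nabla a) - (F^{\prime}(\nabla u) - F^{\prime}(\nabla a))] + (F^{\prime}(\nabla u) - F^{\prime}(\nabla a)) + \A \nabla a$ together with the shifted Taylor bound (\ref{sbound2}) of Lemma \ref{propshift} (applied with $w = \nabla a$) produces
\begin{equation*}
\biggl| \int_{B} \! \A \nabla u \cdot \nabla \phi \, \dd x \biggr| \leq cL \| \nabla \phi \|_{\LL^{\infty}} \int_{B} \! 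E(\nabla u - \nabla a) \, \dd x.
\end{equation*}
The singular contribution is bounded trivially by $\beta \|\nabla \phi\|_{\LL^{\infty}} |D^{s}u|(B)$, and, recalling $E^{\infty}(z) = |z|$, combining and taking the supremum over $g$ delivers the dual estimate $\|v\|_{\LL^{q}(B)} \leq C R^{1-n/q^{\ast}} \int_{B} E(D(u-a))$.

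To conclude, since $q < 2$ one has $E(z) \leq |z|^{q}$ pointwise on $\mathbb{H}$ (by separating the cases $|z| \leq 1$ and $|z| > 1$), hence $\dashint_{B} E(v/R) \, \dd x \leq R^{-q} \dashint_{B} |v|^{q} \, \dd x = R^{-q} |B|^{-1} \|v\|_{\LL^{q}}^{q}$. Substituting the dual estimate and using the identity $q/q^{\ast} = q-1$ one checks that all powers of $R$ cancel, which yields (\ref{keyapprox}) with the claimed constant structure. The main technical obstacle is the linearization step itself: because the minimizer's Euler--Lagrange identity is available only in the sharp $\BV$ form (\ref{extrem}) and $v$ is merely $\BV$, one cannot test the difference equation for $u - h$ directly against $v$ in the coercive Hilbert space scheme familiar from the standard $\WW^{1,p}$ theory. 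The duality device bypasses this by shifting the lack of smoothness onto the auxiliary test map $\phi \in \WW^{2,q^{\ast}}_{0}$, and it is the superlinear behaviour of $E$ near the origin, encoded in (\ref{sbound2}), that converts a naive linear-in-$E$ bound into the stronger power $q > 1$ appearing on the right-hand side of (\ref{keyapprox}).
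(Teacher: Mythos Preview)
Your argument is correct and follows the same overall architecture as the paper's proof: Legendre--Hadamard ellipticity from Lemma~\ref{propshift}, existence of $h$ via the boundary embedding Lemmas~\ref{bvembedding}/\ref{bvembedding2} and Proposition~\ref{exH}(a), and then the crucial linearization step where an auxiliary $\WW^{2,q^\ast}_0$ solution with $q^\ast>n$ supplies, via Morrey, an $\LL^\infty$ gradient bound that lets one test the linearized Euler--Lagrange relation coming from (\ref{extrem}) and (\ref{sbound2}).

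The one genuine difference lies in the choice of right-hand side for the auxiliary system. The paper does not run a duality over all $g\in\LL^{q^\ast}$; instead it rescales to the unit ball, sets $\Psi=(\tilde u-\tilde h)/R$ and solves $-\mathrm{div}\,\mathbb{A}\nabla\Phi=T(\Psi)$ with the radial truncation $T(y)=y$ for $|y|\le1$ and $T(y)=y/|y|$ otherwise. Testing back with $\Psi$ then produces $\int_{\BB}\langle\Psi,T(\Psi)\rangle=\int_{\BB}\min\{|\Psi|^2,|\Psi|\}$ on the left, which is already equivalent to $\int_{\BB}E(\Psi)$, so the $E$-structure appears directly without your final pointwise bound $E(z)\le|z|^q$. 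Your route trades that single clever test map for a standard $\LL^q$--$\LL^{q^\ast}$ duality followed by $E\le|\cdot|^q$; this is slightly less sharp in spirit (you pass through an $\LL^q$ norm of $u-h$ rather than its $E$-energy) but gives exactly the same estimate with the same constants' dependence, and the scaling check you indicate is correct. Either variant is a legitimate implementation of the same idea.
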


\begin{proof} 
We give the details for the case $n \geq 3$ only and leave it to the reader to check that the same proof applies for $n=2$, where
the only difference is that Lemma \ref{bvembedding2} is used instead of Lemma \ref{bvembedding}.

Let $x_{0} \in \Omega$ and fix a number $m>0$. By virtue of Lemma \ref{bvrestrict} $\mathscr{L}^1$ a.e. 
radii $R \in (0, \mathrm{dist}(x_{0}, \partial \Omega ))$ have the property that $u |_{\partial B} \in \BV (\partial B , \RN )$ and $|Du|(\partial B)=0$,
where we wrote $B=B_{R}(x_{0})$. 
We fix such a radius $R$ and write as already indicated $B=B_{R}(x_{0})$. For an affine map $a \colon \Rn \to \RN$ with
$| \nabla a| \leq m$ we put as in the previous subsection $\tF = F_{\nabla a}$ and $\tu = u-a$. Clearly, $\tu |_{\partial B} \in \BV (\partial B , \RN )$
remains true. From (H0) and (H2W) we infer that
\begin{equation}\label{LH}
\tF^{\prime \prime}(0)[ y \otimes x ,y \otimes x ] \geq \tfrac{\ell}{c}| y |^{2} | x |^{2} \quad \forall y \in \RN, \, \forall 
x \in \Rn \quad \mbox{ and } \quad |\tF^{\prime \prime}(0)| \leq c,
\end{equation}
where $c=c(m)>0$ is a constant that as indicated depends on $m$. 

As is customary in this context, we make use of (\ref{extrem}) in 
a linearized form by rewriting it for $\varphi \in \CC^{\infty}_{c}(B, \RN )$ as
\begin{eqnarray*}
\int_{B} \! \tF^{\prime \prime}(0)[ D\tu , \nabla \varphi ] 
&=& \int_{B} \! \tF^{\prime \prime}(0)[ D^{s}\tu , \nabla \varphi ]\\
&& +\int_{B} \! \langle \tF^{\prime \prime}(0)\nabla \tu -\tF^{\prime}(\nabla \tu ),\nabla \varphi \rangle \, \dd x\\
&\stackrel{(\ref{sbound2})}{\leq}& c\int_{B} \! |D^{s}\tu | | \nabla \varphi |\\
&& +cL\int_{B} \! E(\nabla \tu )| \nabla \varphi | \, \dd 
x\\
&\leq& c\int_{B} \! E(D\tu )| \nabla \varphi |.
\end{eqnarray*}
It is at this stage we take advantage of the particular choice of radius $R$ whereby $\tu |_{\partial B}$ 
is BV on $\partial B$ and $|D\tu |(\partial B)=0$. The latter ensures that we may extend the above bound by continuity to hold for all 
$\varphi \in (\WW^{1,\infty}_{0} \cap \CC^{1})(B, \RN )$. The former gives in combination with the embedding result of 
Lemma \ref{bvembedding} that $\tu |_{\partial B} \in \WW^{\frac{1}{n},\frac{n}{n-1}}(\partial B, \RN )$
and
$$
\left( \dashint_{\partial B} \! \int_{\partial B} \! \frac{| \tu (x) - \tu (y)|^{\frac{n}{n-1}}}{|x-y|^{n-1+\frac{1}{n-1}}} \, \dd \mathcal{H}^{n-1}(x)
\, \dd \mathcal{H}^{n-1}(y) \right)^{1-\frac{1}{n}} \leq cR^{1-\frac{1}{n}}\dashint_{\partial B} \! |D_{\tau}\tu | 
$$
for a dimensional constant $c=c(n,N)$.
In view of (\ref{LH}) and Theorem \ref{exH} we can then find a unique solution $\tha \in \WW^{1,\frac{n}{n-1}}(B,\RN )$ to the boundary value
problem
\begin{equation}\label{harmcomp}
\left\{
\begin{array}{ll}
-\mathrm{div} \mathbb{A}\nabla \tha = 0 & \mbox{ in } B\\
\tha = \tu & \mbox{ on } \partial B,
\end{array}
\right.
\end{equation}
where $\mathbb{A}= \tF^{\prime \prime}(0)$. In particular we record that
\begin{equation}\label{Esys}
\int_{B} \! \mathbb{A} [ \nabla \tha , \nabla \varphi ] \, \dd x =0
\end{equation} 
holds for all $\varphi \in \WW^{1,n}_{0}(B, \RN )$ and also that the integral estimate (\ref{boundharm}) holds.

Put $\psi = \tu - \tha$ so that $\psi \in \BV_{0}(B,\RN )$ and
\begin{equation}\label{almosth}
\int_{B} \! \mathbb{A}[ \nabla \psi , \nabla \varphi ] \, \dd x \leq c\int_{B} E(D \tu )| \nabla \varphi | 
\end{equation}
holds for all $\varphi \in (\WW^{1,\infty}_{0} \cap \CC^{1})(B, \RN )$, where $c=c(m,L)$. 
We extract information from (\ref{almosth}) by constructing a suitable test map $\varphi$. 
It is convenient to change variables and refer everything to the open unit ball as follows: Put for $x \in \BB := B_{1}(0)$
$$
\Psi (x) = \frac{1}{R}\psi (x_{0}+Rx),\quad \Phi (x) = \frac{1}{R}\varphi (x_{0}+Rx), \quad U(x)=\frac{1}{R}\tu (x_{0}+Rx).
$$
Then (\ref{almosth}) becomes
\begin{equation}\label{almosthB}
\int_{\BB} \! \mathbb{A}[ D\Psi , \nabla \Phi ] \, \dd x \leq c\int_{\BB} \! E(DU )| \nabla \Phi | \quad \forall
\, \Phi \in (\WW^{1,\infty}_{0} \cap \CC^{1})(\BB, \RN ).
\end{equation}
Denote by $T \colon \RN \to \RN$ the truncation mapping defined by
$$
T(y) = \left\{
\begin{array}{ll}
y & \mbox{ if } |y| \leq 1\\
\tfrac{y}{|y|} & \mbox{ if } |y| > 1,
\end{array}
\right.
$$
and consider the elliptic system
\begin{equation}\label{Esystest}
\left\{
\begin{array}{ll}
-\mathrm{div} \, \mathbb{A}\nabla \Phi = T(\Psi ) & \mbox{ in } \BB\\
\Phi = 0 & \mbox{ on } \partial \BB.
\end{array}
\right.
\end{equation}
Evidently the right-hand side is bounded and we have a unique solution $\Phi \in \WW^{1,2}_{0}(\BB , \RN )$. 
From Proposition \ref{exH} it follows that $\Phi$ is 
of Sobolev class $\WW^{2,p}(\BB ,\RN )$ for each exponent $p \in (1, \infty )$ with bound
\begin{equation}\label{CZp}
\int_{\BB} \! | \nabla^{2} \Phi |^{p} \, \dd x \leq C \int_{\BB} \! |T(\Psi )|^{p} \, \dd x
\end{equation}
where $C=C(m,n,N,p,L,\ell )$ is a constant. If we take $p>n$, then we have that $\Phi \in \CC^{1,1-\frac{n}{p}}(\BB, \RN )$ and since 
$(\nabla \Phi )_{\BB}=0$ it follows from Morrey's inequality (see for instance \cite[Sect.~4.5, Th.~3]{EvGa}) that
$$
\| \nabla \Phi \|_{\LL^{\infty}} \leq c\| \nabla^{2} \Phi \|_{\LL^{p}} \leq c \| T(\Psi ) \|_{\LL^p}.
$$ 
In particular, $\Phi \in (\WW^{1,\infty}_{0} \cap \CC^{1})(\BB , \RN )$ so that $\Phi$ indeed qualifies as a test map in (\ref{almosthB}) and then, 
in turn, by approximation, $\Psi \in \BV_{0}(\BB ,\RN )$ qualifies as a test map in (\ref{Esystest}). 
We also note that a simple estimation using (\ref{minb}) yields 
$$
\| T( \Psi ) \|_{\LL^p} \leq c\left( \int_{\BB} \! E(\Psi ) \, \dd x \right)^{\tfrac{1}{p}},
$$
and consequently
\begin{equation}\label{basic}
\| \nabla \Phi \|_{\LL^{\infty}} \leq c \left( \int_{\BB} \! E(\Psi ) \, \dd x \right)^{\tfrac{1}{p}}
\end{equation}
holds for exponents $p \in (n,\infty )$ and corresponding constants $c=c(m,n,N,p,L,\ell )$. We plug this $\Phi$ into (\ref{almosthB});
recalling that $\Psi$ can be used to test (\ref{Esystest}) and that $\mathbb{A}$ is symmetric the following string 
of inequalities results:
\begin{eqnarray*}
\int_{\BB} \! \min \{ | \Psi |^{2},| \Psi | \} \, \dd x &=& \int_{\BB} \! \langle \Psi , T( \Psi ) \rangle \, \dd x\\
&\stackrel{(\ref{Esystest})}{=}& \int_{\BB} \!  \langle \mathbb{A} \nabla \Phi , \nabla \Psi \rangle \, \dd x\\
&=& \int_{\BB} \!  \langle \mathbb{A} \nabla \Psi , \nabla \Phi \rangle \, \dd x\\
&\stackrel{(\ref{almosthB}), (\ref{basic})}{\leq}& c\int_{\BB} \! E(DU)  
\left( \int_{\BB} \! E(\Psi ) \, \dd x \right)^{\frac{1}{p}},
\end{eqnarray*}
and thus (using again (\ref{minb}))
$$
\left( \int_{\BB} \! E(\Psi ) \, \dd x \right)^{1-\frac{1}{p}} \leq c\int_{\BB} \! E(DU).
$$
Hence we have shown that
\begin{equation}\label{pkeyapprox}
\int_{\BB} \! E(\Psi ) \, \dd x \leq C\left( \int_{\BB} \! E(DU) \right)^{q} 
\end{equation}
where $q=p/(p-1) \in (1, \tfrac{n}{n-1} )$ is the dual exponent and $C=C(m,n,N,q,L,\ell )$ is a constant. Finally, we change
back variables $x \mapsto x_{0}+Rx$ and recall that $\psi = \tu - \tha$ whereby (\ref{pkeyapprox}) turns into (\ref{keyapprox})
thus completing the proof. 
\end{proof}

\subsection{Excess decay estimate}\label{excessdecay}
For a map $u \in \BV (\Omega , \RN )$ and a ball $B_{r}(x_{0}) \subset \Omega$ the relevant excess functional is
$$
\mathscr{E}(x_{0},r) = \int_{B_{r}(x_{0})} \! E \bigl( Du-(Du)_{B_{r}(x_{0})} \bigr) .
$$
The goal of this subsection is the following excess decay estimate:
\begin{proposition}\label{edprop}
Suppose $F \colon \M \to \R$ satisfies $\mathrm{(H0)}$, $\mathrm{(H1)}$, $\mathrm{(H2)}$ and that $u \in \BV (\Omega , \RN )$
is a minimizer. Then each $m>0$ and $q \in (1,\tfrac{n}{n-1})$ there exists a constant $c=c(m,q,n,N,\tfrac{L}{\ell})$ with
the following property. For a ball $B_R = B_{R}(x_{0}) \subset \Omega$ such that
\begin{equation}\label{assump1}
|(Du)_{B_{R}}| < m
\end{equation}
and
\begin{equation}\label{assump2}
\dashint_{B_{R}} \! |Du-(Du)_{B_{R}}| \leq 1
\end{equation}
we have that
\begin{equation}\label{keyexcessdecay}
\mathscr{E} (x_{0},\sigma R) \leq c \left( \sigma^{n+2} + \left( \frac{\mathscr{E}(x_{0},R)}{\Leb (B_{R}(x_{0}))} \right)^{q-1} \right) \mathscr{E}(x_{0},R) 
\end{equation}
holds for all $\sigma \in (0,1)$.
\end{proposition}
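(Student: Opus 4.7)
The plan is to follow the classical blueprint: freeze coefficients at the affine approximation of $u$, compare $u$ with a suitable harmonic map $h$, and transfer the $C^{2}$ decay of $h$ back to $u$ via a Caccioppoli-type control. Let $a\colon\Rn\to\RN$ be the affine map with $\nabla a=(Du)_{B_R}$ and $a_{B_R}=u_{B_R}$, so that $|\nabla a|<m$ by (\ref{assump1}). Write $\mathscr{E}:=\mathscr{E}(x_0,R)$ and $|B|:=\mathscr{L}^n(B_R)$; assumption (\ref{assump2}) together with Lemma~\ref{estE2} furnishes $\dashint_{B_R}|Du-(Du)_{B_R}|\leq\sqrt{3\mathscr{E}/|B|}$, which I will use freely below. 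Using Lemma~\ref{bvrestrict} I select a good radius $r\in(R/2,R)$ at which $u|_{\partial B_r}\in\BV(\partial B_r,\RN)$, $|Du|(\partial B_r)=0$, and the tangential trace satisfies $\dashint_{\partial B_r}|D_\tau(u-a)|\leq c\sqrt{\mathscr{E}/|B|}$.

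On this $B_r$ I invoke Proposition~\ref{approxharmprop} with frozen symbol $\mathbb A:=F''(\nabla a)$, whose Legendre--Hadamard ellipticity follows from (\ref{src}) in Lemma~\ref{propshift}, to produce the harmonic comparison map $h\in\WW^{1,n/(n-1)}(B_r,\RN)$ with $h|_{\partial B_r}=u|_{\partial B_r}$. Combining the trace estimate with (\ref{boundharm}) yields $\dashint_{B_r}|\nabla h-\nabla a|\leq c\sqrt{\mathscr{E}/|B|}$, while the key linearization bound (\ref{keyapprox}) gives
\begin{equation*}
\dashint_{B_r} E\!\left(\tfrac{u-h}{r}\right)\dd x\leq C(\mathscr{E}/|B|)^q.
\end{equation*}
The Weyl-type Lemma~\ref{Weyl} then ensures $h\in C^{\infty}(B_{r/2},\RN)$ with $\sup_{B_{r/2}}|\nabla h-\nabla a|+r\sup_{B_{r/2}}|\nabla^2 h|\leq c\sqrt{\mathscr{E}/|B|}$.

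For the decay I may restrict to $\sigma\leq\tfrac14$, since for larger $\sigma$ the claim is trivial after absorbing into $c$. Let $v(x):=h(x_0)+\nabla h(x_0)(x-x_0)$, so $|\nabla v|\leq m+c$. From Lemma~\ref{estE1} and the subadditivity of $E$ in (\ref{minb}),
\begin{equation*}
\mathscr{E}(x_0,\sigma R)\leq 4\!\int_{B_{\sigma R}}\!E(Du-\nabla h(x_0))\leq c\!\int_{B_{\sigma R}}\!E(D(u-v))+c\!\int_{B_{\sigma R}}\!E(\nabla h-\nabla v)\dd x.
\end{equation*}
The last integral is bounded by $c\sigma^{n+2}\mathscr{E}$ via the Taylor bound $|\nabla h-\nabla v|\leq c\sigma R\sup|\nabla^2 h|\leq c\sigma\sqrt{\mathscr{E}/|B|}$ on $B_{\sigma R}$ together with $E(\cdot)\leq|\cdot|^2$. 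For the first integral I apply Proposition~\ref{caccioppoli} to $u-v$ at scale $2\sigma R$ and split $u-v=(u-h)+(h-v)$: the $(h-v)$-part gives another $c\sigma^{n+2}\mathscr{E}$ contribution from the same Taylor bound, whereas the $(u-h)$-part is reduced via $E(tz)\leq t^2E(z)$ with $t=1/(2\sigma)\geq 1$ and $r\leq R$ to a quantity controlled by $c\sigma^{-2}\!\int_{B_r}\!E((u-h)/R)\dd x\leq c\sigma^{-2}(\mathscr{E}/|B|)^{q-1}\mathscr{E}$ through the displayed harmonic approximation estimate.

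The principal technical obstacle is the parasitic factor $\sigma^{-2}$ that appears in the last step; to replace it by the $\sigma$-free tail announced in (\ref{keyexcessdecay}) I would first establish the above inequality with an intermediate exponent $q'\in(q,\tfrac{n}{n-1})$ in place of $q$ and then absorb $\sigma^{-2}(\mathscr{E}/|B|)^{q'-q}$ into the constant by exploiting the smallness $\mathscr{E}/|B|\leq 1$ from (\ref{assump2}) and (\ref{smallE}): for $\sigma$ not too small the $\sigma^{n+2}$ term already dominates, and for smaller $\sigma$ a Young-type inequality collapses the $\sigma^{-2}$ against the surplus factor $(\mathscr{E}/|B|)^{q'-q}$. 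Assembling all contributions then yields the claimed decay for the target exponent $q$.
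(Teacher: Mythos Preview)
Your approach is essentially that of the paper: select a good radius $r$ close to $R$ via Lemma~\ref{bvrestrict}, solve the linearised system on $B_r$ to obtain the harmonic comparison map, combine the Caccioppoli inequality (Proposition~\ref{caccioppoli}) with the harmonic approximation estimate (\ref{keyapprox}), and handle the remainder by the Weyl-type bound (Lemma~\ref{Weyl}) applied to the first-order Taylor polynomial of $h$ at $x_0$. The paper takes $r\in(\tfrac{9}{10}R,R)$ rather than $r\in(\tfrac{R}{2},R)$ and restricts to $\sigma<\tfrac{1}{5}$, but otherwise the decomposition and the estimates are the same; in particular the paper also arrives at a term $\tfrac{c}{\sigma^{2}}\int_{B_r}E\bigl(\tfrac{u-h}{r}\bigr)\,\dd x$ and then simply writes ``upon collection of the bounds at (\ref{keyexcessdecay})'', i.e.\ it does not spell out any removal of the $\sigma^{-2}$.

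Your proposed remedy for the $\sigma^{-2}$, however, does not cover the full stated range of $q$. The case-splitting/Young argument you sketch amounts to bounding $\sigma^{-2}\Phi^{q'-1}$ by $c\bigl(\sigma^{n+2}+\Phi^{q-1}\bigr)$, where $\Phi=\mathscr{E}/|B|\leq 1$; working through the two cases $\sigma^{n+4}\gtrless\Phi^{q'-1}$ one finds this requires $q'-1\geq (q-1)\tfrac{n+4}{n+2}$, and since also $q'-1<\tfrac{1}{n-1}$ this forces $q-1<\tfrac{n+2}{(n-1)(n+4)}$, which is a strict subinterval of $(0,\tfrac{1}{n-1})$. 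So for $q$ near $\tfrac{n}{n-1}$ no admissible $q'$ exists and the absorption fails. The good news is that this is cosmetic: the estimate one actually obtains,
\[
\mathscr{E}(x_0,\sigma R)\leq c\Bigl(\sigma^{n+2}+\sigma^{-2}\Phi(x_0,R)^{q-1}\Bigr)\mathscr{E}(x_0,R),
\]
is precisely what the paper's own argument yields and is entirely sufficient for the subsequent iteration in Proposition~\ref{ite}, where $\sigma$ is fixed once and for all and $\varepsilon_0$ is chosen after $\sigma$. You should therefore simply state the decay estimate with the factor $\sigma^{-2}$ (or any fixed negative power of $\sigma$) in front of the second term and drop the absorption paragraph.
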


\begin{proof}
We give the details for the case $n \geq 3$ only and leave it to the reader to check that the same proof applies for $n=2$, where
the only difference is that Lemma \ref{bvembedding2} is used instead of Lemma \ref{bvembedding}. As in the previous subsections we put
$\tu = u-a$ and $\tF = F_{\nabla a}$ and remark that by virtue of our assumptions both results from subsections 3.2 and 3.3 are now available.

In view of Lemma \ref{bvrestrict} we can select $r \in (\frac{9}{10}R,R)$ such that $\tu |_{\partial B_{r}} \in \BV (\partial B_{r}, \RN )$ and
\begin{equation}\label{boundR}
\int_{\partial B_{r}} \! |D_{\tau}(\tu |_{\partial B_{r}})| \leq \frac{20}{R}\int_{B_{R}} \! |D \tu |.
\end{equation}
Now the harmonic map $\tha$ determined at (\ref{harmcomp}) satisfies (\ref{boundharm}) and (\ref{keyapprox}). Let
$A \colon \Rn \to \RN$ be the affine map $A(x)= \tha (x_{0})+\nabla \tha (x_{0})(x-x_{0})$ and put $a_{0}=a+A$.
Then $a_0$ is clearly affine and in order to estimate $| \nabla a_0 |$ we note that according to Lemma \ref{Weyl} we have 
for a constant $c=c(n,N,m,\tfrac{L}{\ell})$:
\begin{eqnarray*}
| \nabla \tha (x_{0})| &\leq& \sup_{B_{\frac{r}{2}}} | \nabla \tha | \leq c\dashint_{B_r} \! | \nabla \tha | \, \dd x\\
&\leq& c\left( \dashint_{B_r} \! | \nabla \tha |^{\frac{n}{n-1}} \, \dd x\right)^{\frac{n-1}{n}}\\
&\stackrel{(\ref{boundharm})}{\leq}& c\dashint_{\partial B_r} \! |D_{\tau}(\tu |_{\partial B_r})|\\
&\stackrel{(\ref{boundR})}{\leq}& \frac{c}{R r^{n-1}}\int_{B_{R}} \! |D \tu |\\
&\leq& c\dashint_{B_{R}} \! |D \tu |.
\end{eqnarray*}
In view of (\ref{assump2}) we therefore have that
\begin{eqnarray*}
| \nabla a_0 | &\leq& |(Du)_{B_{R}}| + c\dashint_{B_{R}} \! | Du-(Du)_{B_{R}}|\\
&<& m+c(m) =: C_{m}
\end{eqnarray*}
holds. For $\sigma \in (0,\frac{1}{5})$ we have by (\ref{qminE})
$$
\int_{B_{\sigma R}} \! E(Du-(Du)_{B_{\sigma R}}) \leq 12 \int_{B_{\sigma R}} \! E(D(u-a_0 )).
$$
Next, we apply the Caccioppoli inequality (\ref{caccioppoli2}) on the ball $B_{2\sigma R}=B_{2\sigma R}(x_{0})$ and with the 
affine map $a_0$ defined above:
$$
\int_{B_{\sigma R}} \! E(D(u-a_0 )) \leq c\int_{B_{2\sigma r}} \! E\left( \frac{u-a_{0}}{2\sigma r}\right) \, \dd x
$$
where $c=c(m)$ is a constant obtained from Proposition \ref{caccioppoli} and estimation of the right-hand side using 
$R \in (\tfrac{9}{10}r,r)$ and (\ref{minb}). 
We combine these bounds and use (\ref{minb}) again twice:
\begin{eqnarray*}
\int_{B_{\sigma R}} \! E(Du-(Du)_{x_{0},\sigma R}) &\leq& C\int_{B_{2\sigma R}} \! 
\left( E\left( \frac{\tu-\tha}{\sigma R}\right) +E\left( \frac{\tha-A}{2\sigma R}\right) \right) \, \dd x\\
&\leq& \frac{c}{\sigma^{2}}\int_{B_{r}} \! E\left( \frac{\tu -\tha}{r}\right) \, \dd x+c\int_{B_{2\sigma R}} \! 
E\left( \frac{\tha -A}{2\sigma R}\right) \, \dd x.
\end{eqnarray*}
Here we have for the first term according to (\ref{keyapprox}) for each exponent $q \in (1,\tfrac{n}{n-1})$ and $C=C(m,n,N,q,L,\ell )$ that
$$
\int_{B_{r}} \! E \left( \frac{\tu -\tha}{r} \right) \, \dd x \leq C\left( \dashint_{B_{r}} \! E(D \tu ) \right)^{q}\Leb (B_{R}). 
$$
The second term is estimated using Lemma \ref{Weyl}. Accordingly we have for $x \in B_{2\sigma R} \subset B_{\frac{r}{2}}$ and in view
of our choice of the affine map $A$:
\begin{eqnarray*}
\frac{|\tha (x)-A(x)|}{\sigma R} &\leq& c\sup_{x \in B_{\frac{r}{2}}} \left( | \nabla^{2}\tha (x) | \frac{|x-x_{0}|^2}{\sigma R} \right)\\
&\stackrel{\text{Lemma }\ref{Weyl}}{\leq}& c\dashint_{B_r} \! | \nabla \tha | \, \dd x \sigma\\
&\stackrel{(\ref{boundharm}), (\ref{boundR})}{\leq}& c\dashint_{B_{R}} \! |Du-(Du)_{B_{R}}| \sigma\\
&\stackrel{(\ref{assump2}), (\ref{smallE})}{\leq}& c\sigma \left( \dashint_{B_{R}} \! E(Du-(Du)_{B_{R}}) \right)^{\frac{1}{2}} .
\end{eqnarray*}
Consequently we have
\begin{eqnarray*}
\int_{B_{2\sigma R}} \! E\left( \frac{\tha -A}{2\sigma R}\right) \, \dd x &\leq& c(\sigma R)^{n}
E \left( \sigma \left( \dashint_{B_{R}} \! E(Du-(Du)_{B_{R}}) \right)^{\frac{1}{2}} \right)\\
&\leq& c\sigma^{n+2}\int_{B_{R}} \! E(Du-(Du)_{B_{R}}),
\end{eqnarray*}
and hence we arrive upon collection of the bounds at (\ref{keyexcessdecay}). Increasing the constant $c$ if necessary we see that the bound actually
extends to hold for $\sigma \in [\tfrac{1}{5},1)$ too. The proof is complete. 
\end{proof}

\subsection{Iteration and conclusion}

With the excess decay result of Proposition \ref{edprop} at hand we can conclude in a standard manner. 
The first step is obtained by an iteration argument and is in terms of the normalized excess:
$$
\Phi (x_{0},r) = \frac{\mathscr{E}(x_{0},r)}{\mathscr{L}^{n}(B_{r}(x_{0}))}=
\dashint_{B_{r}(x_{0})} \! E \bigl( Du-(Du)_{B_{r}(x_{0})} \bigr) .
$$

\begin{proposition}\label{ite}
Suppose $F \colon \M \to \R$ satisfies $\mathrm{(H0)}$, $\mathrm{(H1)}$, $\mathrm{(H2)}$ and that $u \in \BV (\Omega , \RN )$
is a minimizer. Let $\alpha \in (0,1)$ and $m>0$. Then there exist positive constants $c=c(n,N,\tfrac{L}{\ell},m)$ and 
$\varepsilon = \varepsilon (n,N,\tfrac{L}{\ell},m,\alpha )$ with the following property. If a ball $B_{R}(x_{0}) \subset \Omega$
satisfies
\begin{equation}\label{cond1}
|(Du)_{B_{R}(x_{0})}| < m
\end{equation}
and
\begin{equation}\label{cond2}
\Phi (x_{0},R) < \varepsilon ,
\end{equation}
then
\begin{equation}\label{concl}
\Phi (x_{0},r) \leq c\left( \frac{r}{R} \right)^{2\alpha}\Phi (x_{0},R)
\end{equation}
for all $r \in (0,R)$.
\end{proposition}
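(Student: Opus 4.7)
The aim is to turn the one-step excess decay from Proposition \ref{edprop} into a power-scale decay by a standard Moser/Campanato iteration, the only non-routine point being the propagation of the largeness condition (\ref{cond1}) along the dyadic chain of radii.

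First I would fix some $q \in (1,\tfrac{n}{n-1})$, say $q = \tfrac{1}{2}(1+\tfrac{n}{n-1})$, and divide the estimate (\ref{keyexcessdecay}) by $\mathscr{L}^n(B_{\sigma R}(x_{0}))$ to get
$$
\Phi(x_{0},\sigma R) \leq c_{0}\bigl(\sigma^{2} + \sigma^{-n}\Phi(x_{0},R)^{q-1}\bigr)\Phi(x_{0},R),
$$
valid under the hypotheses (\ref{assump1}), (\ref{assump2}) of Proposition \ref{edprop} with $m$ replaced by, say, $2m$; here $c_{0}=c_{0}(n,N,\tfrac{L}{\ell},m)$. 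Given $\alpha \in (0,1)$ I would now pick $\sigma = \sigma(\alpha,c_{0})\in (0,\tfrac{1}{2}]$ so that $c_{0}\sigma^{2} \leq \tfrac{1}{2}\sigma^{2\alpha}$, and then shrink $\varepsilon$ so that $c_{0}\sigma^{-n}\varepsilon^{q-1} \leq \tfrac{1}{2}\sigma^{2\alpha}$. With these choices, provided the hypotheses of Proposition \ref{edprop} hold on a ball $B_{r}(x_{0})$ with $\Phi(x_{0},r) \leq \varepsilon$, one gets the one-step contraction
$$
\Phi(x_{0},\sigma r) \leq \sigma^{2\alpha}\Phi(x_{0},r).
$$

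Setting $R_{k} = \sigma^{k}R$, I would then run an induction to show that for every $k \geq 0$ one has simultaneously $|(Du)_{B_{R_{k}}}| < 2m$, $\Phi(x_{0},R_{k}) < \varepsilon$ and $\Phi(x_{0},R_{k}) \leq \sigma^{2\alpha k}\Phi(x_{0},R)$. The inductive step on $\Phi$ is supplied by the contraction above once one knows that the hypotheses of Proposition \ref{edprop} are met at level $k$. The latter reduces to controlling the drift of the averages: from Lemma \ref{estE2} (applied to $\phi = Du-(Du)_{B_{R_{k}}}$) one has, provided $\Phi(x_{0},R_{k}) \leq 1$,
$$
\bigl|(Du)_{B_{R_{k+1}}}-(Du)_{B_{R_{k}}}\bigr| \;\leq\; \sigma^{-n}\dashint_{B_{R_{k}}} \!\bigl|Du-(Du)_{B_{R_{k}}}\bigr| \;\leq\; \sigma^{-n}\sqrt{3\Phi(x_{0},R_{k})}.
$$
Summing this geometrically, using the induction hypothesis $\Phi(x_{0},R_{j}) \leq \sigma^{2\alpha j}\Phi(x_{0},R)$, yields
$$
\bigl|(Du)_{B_{R_{k+1}}}-(Du)_{B_{R}}\bigr| \;\leq\; \frac{\sqrt{3}\,\sigma^{-n}}{1-\sigma^{\alpha}}\sqrt{\Phi(x_{0},R)}.
$$
Hence a final shrinking of $\varepsilon$, now depending also on $m$ and $\alpha$ through this constant, guarantees both that the above drift stays below $m$ (so that $|(Du)_{B_{R_{k+1}}}| < 2m$) and that condition (\ref{assump2}) is preserved at every scale; this is the main technical point, as it is what forces the $\alpha$-dependence of $\varepsilon$.

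Finally I would pass from the dyadic chain to arbitrary $r \in (0,R)$ in the standard way. Given $r$, pick $k \geq 0$ with $R_{k+1} \leq r < R_{k}$; by Lemma \ref{estE1} applied to $\phi = Du\lfloor B_{r}$ with $z = (Du)_{B_{R_{k}}}$,
$$
\mathscr{E}(x_{0},r) \;\leq\; 4\int_{B_{r}} \! E\bigl(Du-(Du)_{B_{R_{k}}}\bigr) \;\leq\; 4\,\mathscr{E}(x_{0},R_{k}),
$$
so that dividing by $\mathscr{L}^{n}(B_{r})$ and using $r \geq \sigma R_{k}$ together with the induction gives
$$
\Phi(x_{0},r) \;\leq\; 4\sigma^{-n}\Phi(x_{0},R_{k}) \;\leq\; 4\sigma^{-n-2\alpha}\Bigl(\tfrac{r}{R}\Bigr)^{2\alpha}\Phi(x_{0},R),
$$
which is (\ref{concl}). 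The hard part of the argument is the induction on $|(Du)_{B_{R_{k}}}|$: all other pieces are essentially algebraic manipulations of the decay inequality, but the drift control is what ties together the smallness of $\Phi$ with the preservation of the structural assumption (\ref{assump1}), and it is what determines the final dependence of $\varepsilon$ on $\alpha$.
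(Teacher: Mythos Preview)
Your proof is correct and follows essentially the same approach as the paper: apply Proposition \ref{edprop} with an enlarged mean bound (you use $2m$, the paper uses $m+1$), choose $\sigma$ and then $\varepsilon$ so that the one-step inequality becomes the contraction $\Phi(\sigma r) \leq \sigma^{2\alpha}\Phi(r)$, and then iterate along $R_k = \sigma^k R$ while controlling the drift of the averages $(Du)_{B_{R_k}}$ via Lemma \ref{estE2} and a geometric sum, which is exactly what forces the $\alpha$-dependence of $\varepsilon$. Your write-up is in fact slightly more explicit than the paper's in the final interpolation from dyadic radii to arbitrary $r$ via Lemma \ref{estE1}, which the paper simply dismisses as standard.
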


\begin{proof}
For ease of notation we write $B_{r}=B_{r}(x_{0})$ and $\Phi (r) = \Phi (x_{0},r)$. 
First recall from Lemma \ref{estE2} that for $\Phi (r) \leq 1$ we have
\begin{equation}\label{pf1}
\dashint_{B_{r}} \! |Du-(Du)_{B_{r}}| \leq \sqrt{3\Phi (r)}
\end{equation}
Consequently, if for a ball $B_{r} \subset \Omega$ we have $|(Du)_{B_{r}}| < m$ and 
$\Phi (r) \leq \tfrac{1}{3}$, then Proposition \ref{edprop} yields
$$
\Phi (\sigma r) \leq c \left( \sigma^{2}+\sigma^{-n} \Phi (r)^{q-1} \right) \Phi (r)
$$
for $q \in (1,\frac{n}{n-1})$, $c=c(n,N,\tfrac{L}{\ell},m,q)$ and $\sigma \in (0,1)$. Fix $q \in (1,\frac{n}{n-1})$
and denote 
\begin{equation}\label{pf2}
C = c(n,N,\tfrac{L}{\ell},m+1,q)
\end{equation}
where we emphasize that we take the constant corresponding to $m+1$ rather than to $m$. With this choice we then
select $\sigma \in (0,1)$ satisfying $C\sigma^{2} < \tfrac{1}{2}\sigma^{2\alpha}$.  For definiteness we fix
\begin{equation}\label{pf3}
\sigma = (3C)^{-\frac{1}{2(1-\alpha )}}.
\end{equation}
Next, take an $\varepsilon_{0} \in (0,\tfrac{1}{3})$ so $C\sigma^{-n}\varepsilon_{0}^{q-1} < \tfrac{1}{2}\sigma^{2\alpha}$, say
\begin{equation}\label{pf4}
\varepsilon_{0} = \left( \frac{\sigma^{n+2\alpha}}{3C} \right)^{\frac{1}{q-1}}.
\end{equation}
Observe that with these choices we have for any ball  $B_{r} \subset \Omega$ satisfying $|(Du)_{B_{r}}| < m+1$
and $\Phi (r) < \varepsilon_0$ that
\begin{equation}\label{pf5}
\Phi (\sigma r) \leq \sigma^{2\alpha}\Phi (r) .
\end{equation}
We iterate this as follows. Let $\varepsilon \in (0, \varepsilon_{0}]$, further restrictions will be imposed below. For the remainder
of the proof we fix a ball $B_{R}=B_{R}(x_{0}) \subset \Omega$
satisfying (\ref{cond1})--(\ref{cond2}). We then have in particular that $\Phi (\sigma R) \leq \sigma^{2\alpha}\varepsilon \leq \varepsilon_0$.
Also, in a standard manner we can estimate
\begin{eqnarray*}
|(Du)_{B_{\sigma R}}| &\leq& |(Du)_{B_{R}}| + |(Du)_{B_{\sigma R}}-(Du)_{B_{R}}|\\
&<& m + \dashint_{B_{\sigma R}} \! |Du - (Du)_{B_{R}}|\\
&\leq& m+\sigma^{-n}\dashint_{B_{R}} \! |Du - (Du)_{B_{R}}|\\
&\stackrel{(\ref{smallE})}{\leq}& m+ \sigma^{-n}\sqrt{3\varepsilon}.
\end{eqnarray*}
We require that $\sigma^{-n}\sqrt{3\varepsilon} \leq 1$, that is,
\begin{equation}\label{pf6}
\varepsilon \leq \frac{\sigma^{2n}}{3}.
\end{equation}
Thus in view of (\ref{pf5}) we have shown that
\begin{equation}\label{pf7}
\Phi (\sigma^{j}R) \leq \sigma^{2\alpha j}\Phi (R) 
\end{equation}
holds for $j=1$, $2$. Let $k \in \N$ and suppose that (\ref{pf7}) holds for $j \in \{ 1, \, \dots \, , \, k \}$. Then
$\Phi (\sigma^{j}R) \leq \sigma^{2\alpha j} \Phi (R) < \sigma^{2\alpha j}\varepsilon < \varepsilon_0$ for each $j \leq k$ and
as above we estimate
\begin{eqnarray*}
|(Du)_{B_{\sigma R}}| &\leq& m+ \sum_{j=1}^{k} \sigma^{-n}\sqrt{3\Phi (\sigma^{j-1}R)}\\
&\leq& m+ \sum_{j=1}^{k} \sigma^{-n}\sqrt{3\sigma^{2\alpha (j-1)}\varepsilon}\\
&<& m+\frac{\sqrt{3\varepsilon}}{\sigma^{n}}\frac{1}{1-\sigma^{\alpha}}.
\end{eqnarray*}
We require that $\frac{\sqrt{3\varepsilon}}{\sigma^{n}}\frac{1}{1-\sigma^{\alpha}} \leq 1$. This is acheived if we take
\begin{equation}\label{pf8}
\varepsilon = \min \{ \varepsilon_{0},\frac{(\sigma^{n}-\sigma^{n+\alpha})^{2}}{3} \} .
\end{equation}
Thus with these choices we have for balls $B_{R}(x_{0}) \subset \Omega$ that satisfy (\ref{cond1})--(\ref{cond2}) shown 
that (\ref{pf7}) holds for all $j \in \N$.
The conclusion follows in a standard manner from this. 
\end{proof}

Using the excess decay estimate of Proposition \ref{ite} we conclude in a routine way with the following $\varepsilon$-regularity
result that in view of Lebesgue's differentiation theorem also implies the last part of Theorem \ref{thm:main}.

\begin{theorem}\label{datheo}
Suppose $F \colon \M \to \R$ satisfies $\mathrm{(H0)}$, $\mathrm{(H1)}$, $\mathrm{(H2)}$ and that $u \in \BV (\Omega , \RN )$
is a minimizer. Then for each $m>0$ there exists $\varepsilon_{m}=\varepsilon_{m} (F) \in (0,1]$ with the following property. If
the ball $B_{R}(x_{0}) \subset \Omega$ satisfies
\begin{equation}\label{1}
|(Du)_{B_{R}(x_{0})}| < m
\end{equation}
and
\begin{equation}\label{2}
\Phi (x_{0},R) < \varepsilon_m ,
\end{equation}
then $u$ is $\CC^{2,\alpha}_{\mathrm{loc}}$ on $B_{\frac{R}{2}}(x_{0})$ for each $\alpha < 1$, and
\begin{equation}\label{3}
\sup_{\stackrel{x,y \in B_{R/4}(x_{0})}{x \neq y}} \frac{| \nabla^{2}u (x)-\nabla^{2}u(y)|^{2}}{|x-y|^{2\alpha}} \leq c\frac{\Phi (x_{0},R)}{R^{2+2\alpha}} 
\end{equation}
where $c=c(n,N,\tfrac{L}{\ell},m,\alpha )$ is a constant.
\end{theorem}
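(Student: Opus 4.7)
The strategy is to upgrade Proposition~\ref{ite} to a statement uniform over a neighbourhood of $x_0$, extract Hölder continuity of $\nabla u$ via Campanato embedding, and then bootstrap to $\CC^{2,\alpha}$ through the Euler--Lagrange equation and classical Schauder theory. First I would note that $y \mapsto (Du)_{B_\rho(y)}$ and $y \mapsto \Phi(y, \rho)$ depend continuously on $y$ for each fixed $\rho > 0$, since $Du$ is a bounded Radon measure and $E$ is continuous of linear growth. Taking $\varepsilon_m$ sufficiently small relative to the threshold produced by Proposition~\ref{ite} applied with parameter $m+1$, and using Lemmas~\ref{estE1} and \ref{estE2} to compare $\Phi(y, R/4)$ and $(Du)_{B_{R/4}(y)}$ with their counterparts on $B_R(x_0)$, one verifies that $|(Du)_{B_{R/4}(y)}| < m+1$ and that $\Phi(y, R/4)$ lies below the threshold for every $y \in B_{R/2}(x_0)$. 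Proposition~\ref{ite} then yields
\[
\Phi(y, \rho) \leq c (\rho / R)^{2\alpha} \Phi(x_0, R), \qquad y \in B_{R/2}(x_0),\ \rho \in (0, R/4).
\]

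The second step translates this into Hölder regularity of $\nabla u$. By Lemma~\ref{estE2} the decay above implies the Campanato-type estimate
\[
\dashint_{B_\rho(y)} |Du - (Du)_{B_\rho(y)}| \leq c (\rho / R)^{\alpha} \sqrt{\Phi(x_0, R)}
\]
for the same range of $y, \rho$. The singular part $D^s u$ must vanish throughout $B_{R/2}(x_0)$, for otherwise the left-hand side would blow up at any point of positive upper density of $|D^s u|$; hence $Du = \nabla u \, \Leb$ there. The classical Campanato embedding then produces $\nabla u \in \CC^{0, \alpha}(B_{R/2}(x_0), \M)$ with Hölder seminorm at most $c R^{-\alpha} \sqrt{\Phi(x_0, R)}$.

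Finally I would bootstrap to $\CC^{2,\alpha}$. With $D^s u \equiv 0$ on $B_{R/2}(x_0)$, the extremality identity of Lemma~\ref{extremal} collapses to the classical weak Euler--Lagrange system $\di F'(\nabla u) = 0$. Since $\nabla u$ is bounded and Hölder continuous, $F''(\nabla u)$ is a $\CC^{0,\alpha}$ coefficient field by $(\mathrm{H}0)$ and satisfies the Legendre--Hadamard condition~(\ref{src}). A Nirenberg difference-quotient argument, combined with a Caccioppoli estimate for the first-order differences, yields $u \in \WW^{2,2}_{\locc}(B_{R/2}(x_0), \RN)$; each partial derivative $\partial_s u$ then solves a linear Legendre--Hadamard elliptic system with $\CC^{0,\alpha}$ coefficients and no source, and classical Schauder theory promotes $u$ to $\CC^{2,\alpha}_{\locc}(B_{R/2}(x_0), \RN)$ for every $\alpha < 1$. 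The quantitative bound~(\ref{3}) follows from the scaled Schauder estimate on $B_{R/4}(x_0)$ for $\partial_s u$, combined with the Campanato seminorm from the previous step, the total homogeneity $R^{-(n+2+2\alpha)}$ being the natural one. The principal obstacle is the transfer in Paragraph~1: one must choose $\varepsilon_m$ so that the uniform smallness at every $y \in B_{R/2}(x_0)$ survives the slip from parameter $m$ to $m+1$ in Proposition~\ref{ite}; once this is arranged, the Campanato--Schauder route is routine given hypothesis $(\mathrm{H}0)$.
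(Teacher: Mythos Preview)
Your proposal is correct and follows essentially the same route as the paper's proof: transfer the smallness hypotheses from $x_0$ to every $y\in B_{R/2}(x_0)$ by invoking Proposition~\ref{ite} with parameter $m+1$ and the quasi-minimality estimate of Lemma~\ref{estE1}, deduce a uniform Campanato decay via Lemma~\ref{estE2}, and then bootstrap to $\CC^{2,\alpha}$ through difference quotients and Schauder theory for the linearized Legendre--Hadamard system. One small remark: your opening claim that $y\mapsto (Du)_{B_\rho(y)}$ and $y\mapsto\Phi(y,\rho)$ are continuous is not true in general when $|Du|(\partial B_\rho(y))>0$, but you do not actually use it---the transfer goes through purely by the direct comparisons with Lemmas~\ref{estE1} and~\ref{estE2}, exactly as in the paper.
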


\begin{proof}
We merely sketch the proof as it is essentially standard once the excess decay estimate from Proposition \ref{ite}
has been established. Fix $m>0$ and consider the corresponding 
$$
\tilde{\varepsilon} = \varepsilon (n,N,\tfrac{L}{\ell},m+1,\tfrac{1}{2})>0
$$ 
that was determined in Proposition \ref{ite}. Note that we take the number that corresponds to $m+1$ rather than to $m$. Let
$\varepsilon \in (0,\tilde{\varepsilon}]$ and assume that $B_{R}(x_{0}) \subset \Omega$ is a ball so that
(\ref{cond1})--(\ref{cond2}) hold. We shall determine $\varepsilon$ in the course of the proof. 
Let $x \in B_{R/2}(x_{0})$ and note that the ball $B_{R/2}(x) \subset B_{R}(x_{0})$ satisfies
$$
\Phi (x,\tfrac{R}{2}) \stackrel{\text{Lemma }\ref{estE1}}{\leq} 4 \cdot 2^{n} \Phi (x_{0},R) < 2^{n+2}\varepsilon 
$$
and, proceeding as above,
\begin{eqnarray*}
|(Du)_{B_{\frac{R}{2}}(x)}| &<& m+2^{n}\dashint_{B_{R}(x_{0})} \! | Du-(Du)_{B_{R}(x_{0})}|\\
& \stackrel{(\ref{smallE})}{\leq} & m+2^{n}\sqrt{3\Phi (x_{0},R)}\\
&<& m+2^{n}\sqrt{3\varepsilon}.
\end{eqnarray*}
Thus if we take $\varepsilon = \min \{ \tfrac{\tilde{\varepsilon}}{2^{n+2}}, \tfrac{1}{3 \cdot 2^{2n}} \}$, then Proposition
\ref{ite} yields the bound
$$
\Phi (x,r) \leq c\frac{r}{R} \Phi (x,\tfrac{R}{2}) \leq c_{1}\frac{\Phi (x_{0},R)}{R}r \quad \mbox{ with } \quad c_{1} = 2^{n+2}c
$$
valid for all $x \in B_{R/2}(x_{0})$ and all $r \in (0,\tfrac{R}{2})$. In view of Lemma \ref{estE2} we can deduce a more familiar looking
excess decay estimate:
\begin{eqnarray*}
\left( \dashint_{B_{r}(x)} \! |Du - (Du)_{B_{r}(x)}| \right)^{2} &\leq& \Phi (x,r)^{2}+2\Phi (x,r)\\
&\leq& c_{1}^{2}\left( \tfrac{r}{R} \right)^{2}\Phi (x_{0},R)^{2}+2c_{1}\tfrac{r}{R}\Phi (x_{0},R)\\
&\leq& c\frac{\Phi (x_{0},R)}{R}r 
\end{eqnarray*}
for all $x \in B_{R/2}(x_{0})$ and $r \in (0,R/2)$. (Here $c=c_{1}^{2}+2c_{1}$ and we used that $\Phi (x_{0},R) \leq 1$.)
Using the Campanato-Meyers integral characterization of H\"{o}lder continuity we conclude that $u$ is $\CC^{1,\frac{1}{2}}$ on $B_{R/2}(x_{0})$
and that we have
$$
\sup_{\stackrel{x,y \in B_{R/2}(x_{0})}{x \neq y}} \frac{| \nabla u (x)-\nabla u(y)|^{2}}{|x-y|} \leq c\frac{\Phi (x_{0},R)}{R}
$$
for some constant $c=c(n,N,\tfrac{L}{\ell},m)$. Finally, in order to boost the regularity of $u$ we employ the difference-quotient method
and elliptic Schauder estimates for linear Legendre-Hadamard elliptic systems. Put $B=B_{R/2}(x_{0})$, let $\delta > 0$ be small and denote for 
increments $h \in \R$ with $|h| < \delta R$ the finite difference of $\nabla u$ in
the $j$-th coordinate direction by $\Delta_{j,h}\nabla u (x) = \nabla u(x+he_j )-\nabla u(x)$, $x \in B^{\prime} := B_{(1-\delta )R/2}(x_{0})$. 
Define the $x$-dependent symmetric bilinear forms (for $x \in B^{\prime}$, $|h| < \delta R$ and $1 \leq j \leq n$) by
$$
Q(x)[z,w] = Q_{j,h}(x)[z,w] = \int_{0}^{1} \! F^{\prime \prime}(\nabla u(x)+t\Delta_{j,h}\nabla u(x))[z,w] \, \dd t \quad (z,  \, w \in \M )
$$
From (H0) and the above follows that $Q \in \CC^{0,\tfrac{1}{2}}(B^{\prime}, \bigodot^{2}( \M ))$ with the corresponding Schauder norm of $Q$ 
bounded uniformly in $|h| < \delta R$ and $1 \leq j \leq n$. By virtue of Lemma \ref{propshift} the form $Q$ is uniformly strongly 
Legendre-Hadamard elliptic: there exists a positive constant $c=c(n,N,\tfrac{L}{\ell},m,\mathrm{diam }\Omega )$ such that for all 
$x \in B^{\prime}$ and $a \in \RN$, $b \in \Rn$,
$$
Q(x)[a \otimes b,a \otimes b] \geq \tfrac{1}{c}|a|^{2}|b|^{2} \quad \mbox{ and } \quad |Q(x)| \leq c
$$
hold. Freezing coefficients and using a partition of unity we establish the following G{\aa}rding inequality ($\alpha$, $\beta > 0$)
$$
\int_{B^{\prime}} \! Q(x)[\nabla \varphi,\nabla \varphi] \, \dd x \geq \int_{(B^{\prime}} \! \bigl( \alpha |\nabla \varphi |^{2}-\beta | \varphi |^{2} \bigr) \, \dd x
$$
valid for all $\varphi \in \WW^{1,\infty}_{0} (B^{\prime}, \RN )$, $|h| < \delta R$, $1 \leq j \leq n$. Using these bounds for the form $Q$ 
and testing the Euler-Lagrange system by $\varphi = \Delta_{j,-h}\bigl( \rho^{2}\Delta_{j,h}u \bigr)$ for a suitable cut-off function $\rho$ we 
find in a standard manner that $u \in \WW^{2,2}_{\mathrm{loc}}(B , \RN )$ and that for each direction $1 \leq j \leq n$,
\begin{equation}\label{linearized}
\int_{B} \! F^{\prime \prime}(\nabla u)[\nabla D_{j}u, \nabla \varphi ] \, \dd x = 0 \quad \forall \varphi \in \CC^{1}_{c}(B , \RN )
\end{equation}
It follows by Schauder estimates, see \cite[Theorem 3.2]{Giaquinta}, that $D_j u$ is $\CC^{1,1/2}_{\mathrm{loc}}$ on $B$, and hence
that $u$ is $\CC^{2,1/2}_{\mathrm{loc}}$ on $B$. But then the coefficients $F^{\prime \prime}(\nabla u)$ in the linear elliptic system (\ref{linearized})
are locally Lipschitz and the desired regularity and bound (\ref{3}) follow using Schauder estimates again (see \cite[Theorem 3.3]{Giaquinta}). 
The proof is complete.
\end{proof}

\section{Extensions}

Let $F \colon \M \to \R$ be an integrand of linear growth (\ref{1grow}) which is mean coercive (\ref{mean}), but possibly non-quasiconvex. 
Then for $v \in \BV ( \Omega , \RN )$ and a Lipschitz subdomain $O \subset \Omega$ we define as in (\ref{intro3}) the relaxation from $\WW^{1,1}$:
$$
\F [v,O] = \inf\left\{\liminf_{j \to \infty} \int_{O} \! F(\nabla v_{j}) \, \dd x\colon\; (v_{j})\subset \WW^{1,1}_{v} (O , \RN ),
\;v_{j}\to v\; \text{in} \; \LL^{1}(O ,\RN) \right\} .
$$
The integral representation (\ref{intro4}) remains valid provided we replace $F$ by its quasiconvex envelope $F^{\mathrm{qc}}$, see \cite{KR1}.

In \cite{Almgren2} \textsc{Almgren} extended the elliptic regularity theory in the parametric context for minimizers to also cover various
classes of \emph{almost minimizers}. This allowed him to treat also variational problems with constraints. In the nonparametric context
of quasiconvex variational integrals of $p$-growth for $p>1$ this has been done by \textsc{Duzaar, Grotowski \& Kronz} in \cite{DGK}.
Here we extend Theorem \ref{thm:main} to almost minimizers in the $\BV$ case of linear growth and at the same time localize the result in 
the spirit of \textsc{Acerbi \& Fusco} \cite{AcerbiFusco2} (and \cite{AnGi} in the convex case).

For an increasing continuous function $\omega \colon [0,\infty ) \to \R$ with $\omega (0)=0$ we say that $u \in \BV ( \Omega , \RN )$
is a $\omega$-almost minimizer for $\F$ provided for each ball $B_{r}(x_{0}) \subset \Omega$ we have
\begin{equation}\label{almost}
\F [u,B_{r}(x_{0})] \leq \F [v,B_{r}(x_{0})]+\omega (r)\int_{B_{r}(x_{0})} \! \biggl( |Dv| + \Leb \biggr)
\end{equation}
whenever $v \in \BV ( \Omega , \RN )$ and $u-v$ is supported in $B_{r}(x_{0})$. 

\begin{theorem}\label{general1}
Let $F \colon \M \to \R$ be globally Lipschitz and mean coercive (\ref{mean}). Suppose $u \in \BV ( \Omega , \RN )$ satisfies (\ref{almost}) for 
some function $\omega$ verifying $\limsup_{r \searrow 0} \omega (r)/r^{2\alpha} < \infty$, where $\alpha \in (0,1)$.
Let $z_0 \in \M$ and assume that
$$
\dashint_{B_{r}(x_0 )} \! E(Du-z_{0}\Leb ) \to 0 \mbox{ as } r \searrow 0.
$$
If $F$ is $\CC^{2,1}$ near $z_0$ and if for some $\ell > 0$ the integrand $z \mapsto F(z)-\ell E(z)$ is quasiconvex at $z_0$,
then $u$ is $\CC^{1,\alpha}$ near $x_0$.
\end{theorem}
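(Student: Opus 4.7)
The plan is to reduce to a local variant of Theorem \ref{thm:main} by modifying $F$ so that it globally satisfies $\mathrm{(H0)}$, $\mathrm{(H1)}$ and $\mathrm{(H2)}$, and then running the proof of Section \ref{sec:main} with the almost-minimality error absorbed as a perturbation. Since only $\CC^{1,\alpha}$ regularity is claimed, the Schauder boot-strap in the last part of the proof of Theorem \ref{datheo} is not needed.

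The first ingredient is a globally defined integrand $\tilde{F} \colon \M \to \R$ that is $\CC^{2,1}$, of linear growth, and strongly quasiconvex in the sense that $\tilde{F} - \tilde{\ell}E$ is quasiconvex for some $\tilde{\ell} > 0$, with $\tilde{F}(z) = F(z)$ on some ball $\overline{B}_{\eta}(z_0) \subset \M$. This is a routine gluing: on $B_{\eta}(z_0)$ keep $F$, off $B_{2\eta}(z_0)$ take a standard strongly quasiconvex integrand such as $\tilde{\ell}_{0}E(z) + \langle b, z \rangle + c$ with $\tilde{\ell}_0$ large, and interpolate smoothly in the annulus. Since $F^{\prime \prime}(z_0) - \ell E^{\prime \prime}(z_0)$ satisfies the Legendre--Hadamard condition, $\eta$ can be chosen small enough that rank-one convexity modulo $\tilde{\ell}E$ persists through the transition by the $\CC^{2,1}$ control of $F^{\prime \prime}$ near $z_0$. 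Any additive affine contribution is immaterial for the variational problem.

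Next, on balls $B_r(x_0)$ with $r$ small, $u$ is to be shown to be a $\tilde{\omega}$-almost minimizer for the relaxed functional $\tilde{\mathscr{F}}$ of $\tilde{F}$, with $\tilde{\omega}(r) = \omega(r) + o(r^{2\alpha})$. The key observation is that on the set where $\nabla u$ is far from $z_0$, as well as on the singular part of $Du$, $E(Du - z_0\Leb)$ dominates a positive multiple of $1 + |Du|$ by \eqref{minb}, so that the discrepancy between $F$ and $\tilde{F}$ evaluated on $Du$ is bounded by $c(\eta) \int_{B_r(x_0)} E(Du - z_0\Leb)$, which is $o(r^n)$ by hypothesis. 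For a competitor $v$ one argues similarly after reducing to the case where $\tilde{\mathscr{F}}[v, B_r]$ is not much larger than $\tilde{\mathscr{F}}[u, B_r]$, which forces $v$ to have comparable $E$-excess. Given this, the Caccioppoli inequality (Proposition \ref{caccioppoli}) and the harmonic approximation (Proposition \ref{approxharmprop}) go through for $u$ as an almost minimizer of $\tilde{F}$, with an extra $+c\tilde{\omega}(r)r^n$ on the right-hand sides; tracking these through the excess-decay argument of Proposition \ref{edprop} yields
\[
\mathscr{E}(x_0, \sigma R) \leq c\left( \sigma^{n+2} + \Phi(x_0, R)^{q-1} \right) \mathscr{E}(x_0, R) + c\tilde{\omega}(R) R^n,
\]
and iterating geometrically as in Proposition \ref{ite} using $\tilde{\omega}(r) \lesssim r^{2\alpha}$ produces $\Phi(x_0, r) \leq c r^{2\alpha}$ on a small ball around $x_0$, whence $u \in \CC^{1,\alpha}$ near $x_0$ via the Campanato--Meyers characterization.

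The main obstacle will be verifying that the almost-minimality transfers cleanly from $F$ to $\tilde{F}$ with the correct scaling: competitors $v$ whose gradients wander far from $z_0$ on large portions of $B_r(x_0)$ must first be pre-processed by a truncation and comparison argument, and one must also handle the relaxed part of the functionals, where the recession integrands $F^{\infty}$ and $\tilde{F}^{\infty}$ need not coincide in directions other than that of $z_0$. An analogous step in the $p$-growth setting, as carried out in \cite{DGK, AcerbiFusco2}, is simplified by the homogeneity of $|\cdot|^{p}$, which is unavailable here because of the linear growth and the degeneracy at infinity of $E$.
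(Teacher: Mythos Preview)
Your outline correctly identifies the architecture---rerun Section~\ref{sec:main} with the almost-minimality error carried along as a perturbation, and stop before the Schauder bootstrap---but the reduction step has a genuine gap. The gluing construction of $\tilde F$ cannot deliver a globally strongly quasiconvex integrand in the way you describe. Your justification is that ``rank-one convexity modulo $\tilde\ell E$ persists through the transition by the $\CC^{2,1}$ control of $F^{\prime\prime}$ near $z_0$''; at best this yields rank-one convexity of $\tilde F-\tilde\ell E$, which is strictly weaker than quasiconvexity. Quasiconvexity is a nonlocal integral condition and is not preserved under smooth cut-offs in matrix space, so there is no reason for your $\tilde F-\tilde\ell E$ to be quasiconvex. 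Since Proposition~\ref{caccioppoli} genuinely uses quasiconvexity (it is, as emphasised there, the only step where it enters), your $\tilde F$ does not give you access to the Caccioppoli inequality. A second, related difficulty---which you yourself flag as the ``main obstacle''---is that the almost-minimality hypothesis \eqref{almost} is phrased in terms of the \emph{relaxed} functional $\mathscr F$, whose integrand is $F^{\mathrm{qc}}$ rather than $F$ or $\tilde F$, and the transfer to $\tilde F$ (including matching recession functions) is left unresolved.

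The paper bypasses both problems at once by working with $F^{\mathrm{qc}}$ instead of constructing an auxiliary integrand; this is the role of Lemma~\ref{modify}. Since $F^{\mathrm{qc}}$ is quasiconvex by definition, no gluing is needed; since the relaxation $\mathscr F$ already has integrand $F^{\mathrm{qc}}$, the almost-minimality is automatically a condition on the $F^{\mathrm{qc}}$-functional and your ``main obstacle'' evaporates. What must be checked is that $F^{\mathrm{qc}}=F$ on a neighbourhood of $z_0$, so that the local $\CC^{2,1}$ regularity and the strong quasiconvexity at $z_0$ are inherited by $F^{\mathrm{qc}}$: this is exactly Lemma~\ref{modify}, proved via the two-regime bound $|F_{z_0}(z)|\leq c\,\Theta(|z|)E(z)$ that combines local $\CC^2$ smoothness with the global Lipschitz assumption, following \cite[Lemma~2.2]{AcerbiFusco2}. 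With this in hand, the bounds of Lemma~\ref{propshift} and the Caccioppoli inequality hold for shifts $w$ in a small neighbourhood of $z_0$ (perturbing the quasiconvexity inequality from $z_0$ to nearby $w$ via the same two-regime estimate), and the remainder of your outline goes through.
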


\noindent
We are not giving the detailed proof for Theorem \ref{general1} here since it follows closely the proof from Section \ref{sec:main}
of Theorem \ref{thm:main}. In order to execute the modified proof one requires the following observation that is closely related
to \cite[Lemma 2.2]{AcerbiFusco2}:

\begin{lemma}\label{modify}
Let $F \colon \M \to \R$ be globally Lipschitz and mean coercive (\ref{mean}), and fix $z_{0} \in \M$. If $F$ is $\CC^2$ near $z_0$
and for some $\ell > 0$ the integrand $z \mapsto F(z)-\ell E(z)$ is quasiconvex at $z_0$, then the quasiconvex envelope $F^{\mathrm{qc}}$
of $F$ is real-valued, satisfies (\ref{mean}), $\mathrm{lip}(F^{\mathrm{qc}})=\mathrm{lip}(F)$, $z \mapsto F^{\mathrm{qc}}(z)-\ell E(z)$
is quasiconvex at $z_0$ and $F^{\mathrm{qc}}=F$ near $z_0$.
\end{lemma}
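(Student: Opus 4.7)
The plan is to verify the five assertions in the order they appear, with the main substantive obstacle being the local identity $F^{\mathrm{qc}} = F$ on a neighborhood of $z_{0}$; the other properties then either reduce to this identity or are routine consequences of Dacorogna's formula
\[
F^{\mathrm{qc}}(z) = \inf \Bigl\{ \dashint_{(0,1)^{n}} \! F(z+\nabla \varphi ) \, \dd x : \varphi \in \WW_{0}^{1,\infty}((0,1)^{n},\RN ) \Bigr\} .
\]
Combining mean coercivity of $F$ with Jensen's inequality applied to the zero-mean constraint $\dashint \nabla \varphi = 0$ immediately yields $F^{\mathrm{qc}}(z) \geq c_{1}|z|+c_{2}$, so $F^{\mathrm{qc}}$ is real-valued and mean coercive. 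The translation bound $\dashint F(z_{1}+\nabla \varphi ) \leq \dashint F(z_{2}+\nabla \varphi ) + \lip (F)|z_{1}-z_{2}|$ followed by infimising in $\varphi$ gives $\lip (F^{\mathrm{qc}}) \leq \lip (F)$. The pointwise equality $F^{\mathrm{qc}}(z_{0}) = F(z_{0})$ is obtained from the quasiconvexity of $F - \ell E$ at $z_{0}$ combined with Jensen's inequality for the convex $E$: for every admissible $\varphi$,
\[
F(z_{0}) \leq \dashint \! F(z_{0}+\nabla \varphi ) \, \dd x + \ell \Bigl[ E(z_{0}) - \dashint \! E(z_{0}+\nabla \varphi ) \, \dd x \Bigr] \leq \dashint \! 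F(z_{0}+\nabla \varphi ) \, \dd x,
\]
whence $F(z_{0}) \leq F^{\mathrm{qc}}(z_{0})$, matching the trivial $F^{\mathrm{qc}} \leq F$.

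The main obstacle is upgrading this pointwise identity to a neighborhood identity, i.e.\ showing that $F$ is actually quasiconvex at every $z$ in some small ball $U$ around $z_{0}$. I would work with the shifted integrand $F_{z_{0}}$ defined in (\ref{shifted}); note that $F$ is quasiconvex at $z_{0}+w$ if and only if $F_{z_{0}}$ is quasiconvex at $w$, because the linear term in the Taylor expansion averages out under $\dashint \nabla \varphi = 0$. By $\mathrm{(H0)}$ the integrand $F_{z_{0}}$ is $\CC^{2,1}$ near $0$ with $F_{z_{0}}(0)=0=F_{z_{0}}'(0)$, and by the structural inequality (\ref{src}) its Hessian at $0$ satisfies a strong Legendre--Hadamard condition with margin of order $\ell (1+|z_{0}|^{2})^{-3/2}$, which persists on a small ball by $\CC^{2}$ continuity. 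Writing for $|w|$ small
\[
\dashint_{(0,1)^{n}} \! \bigl[ F_{z_{0}}(w+\nabla \varphi ) - F_{z_{0}}(w) \bigr] \, \dd x = \tfrac{1}{2} \dashint \! F_{z_{0}}''(w)[\nabla \varphi ,\nabla \varphi ] \, \dd x + R(w,\varphi ),
\]
the quadratic form on the right is bounded below by a G{\aa}rding-type inequality on $\WW_{0}^{1,2}$ (which, for constant-coefficient strongly Legendre--Hadamard forms on the cube, holds without lower-order terms via Fourier analysis), whereas the remainder $R(w,\varphi )$ is $O(|\nabla \varphi |^{3})$ pointwise for $|\nabla \varphi |$ bounded, hence absorbable once $|w|$ is small enough. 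For general $\varphi$ with unbounded gradient I would reduce to the bounded case by a Lipschitz truncation of $\varphi$, controlling the truncated-out piece via the global Lipschitz bound $\lip (F)$ inherited by $F_{z_{0}}$. This Taylor-plus-truncation bookkeeping, in the spirit of the argument of \textsc{Acerbi \& Fusco}, is the delicate part of the proof.

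With the identity $F^{\mathrm{qc}} = F$ on an open ball $U \ni z_{0}$ established, the two remaining assertions follow cleanly. Quasiconvexity of $F^{\mathrm{qc}} - \ell E$ at $z_{0}$ is obtained by a nested Dacorogna argument: given $\varphi \in \WW_{0}^{1,\infty}$ and $\varepsilon > 0$, for each $x$ pick $\psi_{x}$ nearly minimising in Dacorogna's formula for $F^{\mathrm{qc}}(z_{0}+\nabla \varphi (x))$, combine the family $(\psi_{x})$ via a Vitali covering into a single admissible test map on $(0,1)^{n}$, apply quasiconvexity of $F - \ell E$ at $z_{0}$ to the combined map, and use Jensen's inequality on the convex $E$ to migrate the $\ell E$-term from the inner average to the outer one; letting $\varepsilon \searrow 0$ concludes. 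Finally, the Lipschitz equality $\lip (F^{\mathrm{qc}}) = \lip (F)$ follows from the already-established upper bound together with $F^{\mathrm{qc}} = F$ on $U$ where $F$ is $\CC^{1}$; for globally Lipschitz rank-one convex integrands of linear growth the optimal Lipschitz constant is attained as $\sup |\nabla F|$, which can be approached inside $U$ via a perturbation argument, forcing equality.
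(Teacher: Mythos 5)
Your handling of the routine assertions (real-valuedness and mean coercivity of $F^{\mathrm{qc}}$ via Jensen, the upper bound $\lip (F^{\mathrm{qc}}) \leq \lip (F)$, the pointwise identity $F^{\mathrm{qc}}(z_{0})=F(z_{0})$, and the Vitali-covering proof of quasiconvexity of $F^{\mathrm{qc}}-\ell E$ at $z_{0}$ — the last being a from-scratch proof of the envelope inequality $F^{\mathrm{qc}} \geq (F-\ell E)^{\mathrm{qc}}+\ell E$ that the paper uses as a one-liner) is correct. The genuine gap is in the central step, the identity $F^{\mathrm{qc}}=F$ on a neighborhood of $z_{0}$, i.e.\ quasiconvexity of $F$ at every nearby $w$. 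Your Taylor-plus-truncation plan uses only two pieces of information at the shifted point $w$: the Legendre--Hadamard positivity of $F^{\prime \prime}$ near $z_{0}$ and the global Lipschitz bound. That is strictly weaker than what is needed: positivity of the second variation does not imply quasiconvexity at a point. Quantitatively, the remainder in your expansion is of size $\lip (F^{\prime \prime})\,|\nabla \varphi |^{3}$ only where $w+\nabla \varphi$ stays in the $\CC^{2}$-ball, so absorbing it into $\tfrac{1}{2}\dashint F^{\prime \prime}(w)[\nabla \varphi ,\nabla \varphi ] \geq \tfrac{\ell}{c}\dashint |\nabla \varphi |^{2}$ requires $\| \nabla \varphi \|_{\infty}$ small; shrinking $|w|$ does not provide that smallness. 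Lipschitz truncation at a small level $\lambda$ does not repair this: the energy error committed where the truncation acts is of order $\lip (F)\int_{\mathrm{bad}}(|\nabla \varphi |+\lambda )$, linear in $\nabla \varphi$ with a coefficient that is not small, while the only gain available is the quadratic form of the truncated map on the good set — there is no mechanism to absorb the former into the latter. (Note also that for large gradients the ``remainder'' is not a perturbation at all: since $F$ grows only linearly, it is of size $-\tfrac{1}{2}F^{\prime \prime}(w)[\nabla \varphi ,\nabla \varphi ]+O(|\nabla \varphi |)$ and cancels the quadratic gain.) The information that must be used for moderate and large test gradients is the strong quasiconvexity inequality at $z_{0}$ itself, which constrains $F$ globally; your argument never invokes it at $w \neq z_{0}$. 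The paper avoids the difficulty by comparing $F$ with the globally quasiconvex minorant $(F-\ell E)^{\mathrm{qc}}+\ell E$, which touches $F$ at $z_{0}$, and then runs the argument of Acerbi--Fusco \cite[Lemma 2.2]{AcerbiFusco2} based on the bound $|F(z)| \leq c\,\Theta (|z-z_{0}|)E(z-z_{0})$ (after normalization) to upgrade the contact point to a contact neighborhood; you would need to reproduce that argument or something equivalent.

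A secondary problem is the reverse Lipschitz inequality $\lip (F^{\mathrm{qc}}) \geq \lip (F)$. This is a global statement: $\lip (F)$ is the supremum over all of $\M$ of the local Lipschitz constant, and knowing $F^{\mathrm{qc}}=F$ on a small ball $U$ around $z_{0}$ only gives $\lip (F^{\mathrm{qc}}) \geq \lip (F|_{U})$, which can be much smaller than $\lip (F)$; your claim that the optimal constant ``can be approached inside $U$'' has no justification. The paper obtains the equality from Matou\v{s}ek's results on directional convexity \cite[Lemma 5.1, Corollary 5.2]{Mat}, which is a genuinely nontrivial ingredient and not a consequence of the local identity.
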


\begin{proof}
Since $F \geq F^{\mathrm{qc}} \geq (F-\ell E)^{\mathrm{qc}} + \ell E$ and equality holds at $z_0$ we infer that $F^{\mathrm{qc}}-\ell E$
is quasiconvex at $z_0$. In particular, $F^{\mathrm{qc}}$ is then a real-valued quasiconvex integrand. From \cite[Lemma 3.1]{CK}
we deduce that $F^{\mathrm{qc}}$ satisfies (\ref{mean}) with the same constants as $F$. That $\mathrm{lip}(F^{\mathrm{qc}})=\mathrm{lip}(F)$
is a consequence of \cite[Lemma 5.1, Corollary 5.2]{Mat}. Finally, if$F$ is $\CC^2$ on the ball $B_{r}(z_0 )$ and we assume, as we may, that
$F(z_0 ) =0$, $F^{\prime}(z_0 ) =0$, then
\begin{equation}\label{goodbound}
|F(z)| \leq c \Theta \bigl( |z-z_{0}| \bigr) E\bigl( |z-z_{0}| \bigr) \quad \forall z \in \M
\end{equation}
for some constant $c$ and modulus of continuity $\Theta$. We can arrange that $\Theta \colon [0, \infty ) \to [0,1]$ is continuous,
increasing, concave and $\Theta (0)=0$, $\Theta (1)=1$. The proof of (\ref{goodbound}) is implicit in the proof of Lemma 2.2 in 
\cite{AcerbiFusco2} that we may also follow to conclude that $F^{\mathrm{qc}}=F$ on $B_{r/2}(z_{0})$.
\end{proof}

\noindent
As we have dealt with the case of autonomous integrands in the main part of this paper, let us finish by briefly addressing 
the case of $x$-dependent integrands and explain how these can be handled. From a technical perspective, 
the way in which functions are applied to vectorial Radon measures is equally covered by Section~\ref{sec:functionsofmeasures}.
We focus here on a special case and merely state a result that can be made to follow from Theorem \ref{general1}.

\begin{corollary}\label{general2}
Let $F \colon \Omega \times \M \to \R$ be continuous and assume that for some constants $\ell$, $L > 0$ and $\alpha \in (0,1)$ we have
for $x$, $x_1$, $x_2 \in \Omega$ and $z \in \M$,
$$
\left\{
\begin{array}{l}
\ell |z| \leq F(x,z) \leq L(|z|+1),\\
|F(x_{1},z)-F(x_{2},z)| \leq L \min \{ 1,|x_{1}-x_{2}|^{2\alpha} \} (|z|+1),\\
z \mapsto F(x,z) \mbox{ is } \CC^3 \mbox{ and } \partial^{3}F(x,z)/\partial z^{3} \mbox{ is jointly continuous in } (x,z)\\
z \mapsto F(x,z)-\ell E(z) \mbox{ is quasiconvex.} 
\end{array}
\right.
$$
Suppose that $u \in \BV (\Omega , \RN )$ is a minimizer in the sense that 
$$
\int_{\Omega} \! F(x,Du) \leq \int_{\Omega} \! F(x,Dv)
$$
holds for all $v \in \BV ( \Omega , \RN )$ for which $v-u$ has compact support in $\Omega$. 
Then there exists an open subset $\Omega_{u} \subset \Omega$ such that $\Leb ( \Omega \setminus \Omega_{u})=0$ and 
$u$ is $\CC^{1,\alpha}_{\mathrm{loc}}$ on $\Omega_u$.
\end{corollary}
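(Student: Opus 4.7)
The plan is to reduce Corollary \ref{general2} to Theorem \ref{general1} by freezing the $x$-dependence at a suitably chosen base point $x_0$ and showing that $u$ is an $\omega$-almost minimizer for the corresponding autonomous functional.

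By the Calder\'{o}n-Zygmund result on approximate differentiability of $\BV$ maps (\cite[Theorem 3.83]{AFP}) together with the singularity of $|D^{s}u|$ with respect to $\Leb$, for $\Leb$-a.e.\ $x_0 \in \Omega$ both the approximate gradient $z_0 := \nabla u(x_0) \in \M$ exists and $|D^{s}u|(B_r(x_0))/r^n \to 0$ as $r \searrow 0$. At any such point (\ref{minb}) gives $\dashint_{B_r(x_0)} \! E(Du - z_0 \Leb) \leq \dashint_{B_r(x_0)} \! |\nabla u - z_0| \, \dd x + r^{-n}|D^{s}u|(B_r(x_0)) \to 0$. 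It therefore suffices to show that $u$ is $\CC^{1,\alpha}$ near each such $x_0$, for then the set $\Omega_u$ of $\CC^{1,\alpha}$-regular points is open and of full measure.

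Fix such an $x_0$, set $\tilde F(z) := F(x_0, z)$, and verify the hypotheses of Theorem \ref{general1} at $z_0$: $\tilde F$ is globally Lipschitz (rank-one convexity of $\tilde F - \ell E$ plus linear growth, via (\ref{lip})), mean coercive (directly from $\ell|z| \leq \tilde F(z)$), $\CC^{2,1}$ near $z_0$ (by joint continuity of $\partial^{3}F/\partial z^3$), and $\tilde F - \ell E$ is quasiconvex at $z_0$. For the $\omega$-almost minimality, fix $v \in \BV$ with $u-v$ supported in $B_{r}(x_{0})$ and telescope
\begin{align*}
\int_{B_r} \! \tilde F(Du) - \int_{B_r} \! \tilde F(Dv) &= \int_{B_r} \! \bigl[ \tilde F(Du) - F(x,Du) \bigr]\\
&\quad + \int_{B_r} \! \bigl[ F(x,Du) - F(x,Dv) \bigr]\\
&\quad + \int_{B_r} \! \bigl[ F(x,Dv) - \tilde F(Dv) \bigr].
\end{align*}
The middle term is $\leq 0$ by global minimality of $u$; the H\"{o}lder-in-$x$ hypothesis bounds the first and third terms by $Lr^{2\alpha}\int_{B_r}(|Du| + 1)$ and $Lr^{2\alpha}\int_{B_r}(|Dv| + 1)$ respectively; and the coercivity chain $\ell\int_{B_r}|Du| \leq \int_{B_r}F(x,Du) \leq \int_{B_r}F(x,Dv) \leq L\int_{B_r}(|Dv|+1)$ absorbs $\int_{B_r}|Du|$ into $\int_{B_r}(|Dv|+1)$. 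This delivers (\ref{almost}) with $\omega(r) = Cr^{2\alpha}$ for $C=C(L,\ell)$, and Theorem \ref{general1} yields the desired $\CC^{1,\alpha}$ regularity of $u$ near $x_0$.

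The main obstacle is the measure-theoretic bookkeeping in the outer telescoping terms: the pointwise H\"{o}lder-in-$x$ hypothesis on $F$ must be transferred to the measure representations $\int F(x, Du)$, which requires the analogous bound $|F^\infty(x_1, z) - F^\infty(x_2, z)| \leq L|x_1 - x_2|^{2\alpha}|z|$ for the recession function. This follows by passing $t \to \infty$ in (\ref{urecession}), and in combination with area-strict approximation by smooth maps (Lemma \ref{Eapprox}) and Reshetnyak-type continuity (Lemma \ref{Econt}) the displayed bounds pass rigorously from smooth approximants to $\BV$ measures.
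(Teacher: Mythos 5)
Your overall architecture --- pick $x_0$ a point of approximate differentiability where $|D^{s}u|(B_{r}(x_{0}))/r^{n}\to 0$, freeze the integrand there, verify the hypotheses of Theorem \ref{general1} for $\tilde F=F(x_0,\cdot)$, and get almost minimality by telescoping and absorbing $\int_{B_r}|Du|$ through the coercivity chain --- is the intended reduction (the paper offers no proof, only the assertion that the corollary follows from Theorem \ref{general1}), and the individual estimates you record are correct: the H\"{o}lder-in-$x$ bound for $F^{\infty}(\cdot,z)$ obtained from (\ref{urecession}), the transfer of the pointwise bounds to the measure representation, and the absorption $\ell\int_{B_r}|Du|\leq L\int_{B_r}\bigl(|Dv|+\Leb\bigr)$.

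There is, however, a genuine gap in your verification of (\ref{almost}). That condition quantifies over \emph{all} balls $B_{\rho}(y)\subset\Omega$, not only over balls centred at the frozen point. For $y\neq x_0$ your telescoping produces the error $L\sup_{x\in B_{\rho}(y)}|x-x_0|^{2\alpha}\approx L\bigl(|y-x_0|+\rho\bigr)^{2\alpha}$, which does \emph{not} tend to $0$ as $\rho\searrow 0$; so $u$ has not been shown to be an $\omega$-almost minimizer of the relaxed functional of $\tilde F$ for $\omega(r)=Cr^{2\alpha}$, nor for any admissible $\omega$. This is not a removable technicality: the proof of Theorem \ref{general1}, which follows Section \ref{sec:main}, runs the Caccioppoli inequality, the harmonic approximation and the excess iteration on balls centred at \emph{every} point $y$ near $x_0$ in order to conclude regularity on a neighbourhood, and at such $y$ a non-decaying perturbation of size $|y-x_0|^{2\alpha}$ per unit volume stalls the excess decay at that level, destroying the $\CC^{1,\alpha}$ conclusion. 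The repair is to freeze at the centre of each comparison ball: for every $B_{\rho}(y)\subset\Omega$ one gets $\int_{B_{\rho}(y)}F(y,Du)\leq\int_{B_{\rho}(y)}F(y,Dv)+C\rho^{2\alpha}\int_{B_{\rho}(y)}\bigl(|Dv|+\Leb\bigr)$, and one must then check that the argument of Theorem \ref{general1} tolerates a ball-dependent frozen integrand. This is exactly where the parts of the hypotheses your proof never uses come in: the joint continuity of $\partial^{3}F/\partial z^{3}$ and the quasiconvexity of $F(x,\cdot)-\ell E$ at every $x$ furnish ellipticity, Lipschitz and $\CC^{2,1}$ constants for $F(y,\cdot)$ that are uniform in $y$ on compact subsets of $\Omega$, which is what the re-freezing requires. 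The fact that your argument only needs $F(x_0,\cdot)\in\CC^{3}$ at the single point $x_0$, and never invokes the joint continuity in $x$, is a symptom of the gap.
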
 

\noindent
Finally we remark that all the above stated regularity results would extend if instead of the integrand $F=F(Du)$ (or $F=F(x,Du)$) we
considered the integrand $F(Du)+f(x,u)$, where $f \colon \Omega \times \RN \to \R$ is Carath\'{e}odory and satisfies the growth condition
$$
0 \leq f(x,y) \leq c\bigl( |y|^{\tfrac{n}{n-1}} +1 \bigr) \quad \forall (x,y) \in \Omega \times \RN ,
$$
where $c>0$ is a constant (see \cite{Hamburger} for general results in this spirit in the $p$-growth context).
We could also cover the more general notions of almost minimizers considered in \cite{Schmidt1} for the purpose of treating
some image restoration problems.

%\noindent
%\textit{Declaration.} We hereby declare that we have no conflicts of interests.

\bigskip

\noindent
Mathematisches Institut der Univ.~Bonn\\
Endenicher Allee 60, 53111 Bonn\\
Germany
\bigskip

\noindent
Mathematical Institute, University of Oxford, Andrew Wiles Building\\ 
Radcliffe Observatory Quarters, Woodstock Road, Oxford OX2 6GG\\ 
United Kingdom

\end{document}